\renewcommand\arraystretch{1.4}
 \newcommand{\tbigcap}{\mathop{\textstyle \bigcap}}
 \newcommand{\tbigvee}{\mathop{\textstyle \bigvee}}
 \newcommand{\tbigwedge}{\mathop{\textstyle \bigwedge}}
\newcommand{\tsum}{\mathop{\textstyle \sum}}
\newcommand{\set}[1]{\{\,#1\,\}}
\newcommand{\bigset}[1]{\bigl\{\,#1\,\bigr\}}
\newcommand{\alg}[1]{\ensuremath{\mathfrak{#1}}} 
\newcommand{\op}{\ensuremath{ {o\!p}}} 
\newcommand{\cat}[1]{\ensuremath{\mathsf{#1}}} 
\theoremstyle{plain}
          \newtheorem{theorem}{Theorem}[section]
          \newtheorem{lemma}[theorem]{Lemma}
          \newtheorem{proposition}[theorem]{Proposition}
	\newtheorem{corollary}[theorem]{Corollary}
	\newtheorem{fact}[theorem]{Fact}
	\newtheorem*{fact*}{Fact}
        \theoremstyle{definition}
          \newtheorem{example}[theorem]{Example}
          \newtheorem*{addition*}{Addition}
\newtheorem*{standingassumption*}{Standing Assumption}
        \theoremstyle{remark}
\newtheorem*{remark*}{Remark}          
\newtheorem{remark}[theorem]{Remark}
\newtheorem{remarks}[theorem]{Remarks}
\newtheorem{comment*}[theorem]{Comment}
\numberwithin{equation}{section}
\newcounter{num}[section]
\newcommand{\donotbreakdash}[1]{#1\nobreakdash-\hspace{0pt}}
\tikzset{every picture/.style={line width=0.7pt}}
\newbox\xrat@below
\newbox\xrat@above
\renewcommand{\xrightarrow}[2][]{%
 \setbox\xrat@below=\hbox{\ensuremath{\scriptstyle #1}}%
 \setbox\xrat@above=\hbox{\ensuremath{\scriptstyle #2}}%
\pgfmathsetlengthmacro{\xrat@len}{max(\wd\xrat@below,\wd\xrat@above)+.6em}%
 \mathrel{\tikz [->,baseline=-.75ex]
     \draw (0,0) -- node[below=-2pt] {\box\xrat@below}
        node[above=-2pt] {\box\xrat@above}
      (\xrat@len,0) ;}}
\renewcommand{\xleftarrow}[2][]{%
 \setbox\xrat@below=\hbox{\ensuremath{\scriptstyle #1}}%
 \setbox\xrat@above=\hbox{\ensuremath{\scriptstyle #2}}%
\pgfmathsetlengthmacro{\xrat@len}{max(\wd\xrat@below,\wd\xrat@above)+.6em}%
 \mathrel{\tikz [<-,baseline=-.75ex]
     \draw (0,0) -- node[below=-2pt] {\box\xrat@below}
        node[above=-2pt] {\box\xrat@above}
      (\xrat@len,0) ;}}
\newcommand{\xrightarrowtail}[2][]{%
 \setbox\xrat@below=\hbox{\ensuremath{\scriptstyle #1}}%
 \setbox\xrat@above=\hbox{\ensuremath{\scriptstyle #2}}%
\pgfmathsetlengthmacro{\xrat@len}{max(\wd\xrat@below,\wd\xrat@above)+.6em}%
 \mathrel{\tikz [>->,baseline=-.75ex]
     \draw (0,0) -- node[below=-2pt] {\box\xrat@below}
        node[above=-2pt] {\box\xrat@above}
      (\xrat@len,0) ;}}
\newcommand{\xlongmapsto}[2][]{%
 \setbox\xrat@below=\hbox{\ensuremath{\scriptstyle #1}}%
 \setbox\xrat@above=\hbox{\ensuremath{\scriptstyle #2}}%
\pgfmathsetlengthmacro{\xrat@len}{max(\wd\xrat@below,\wd\xrat@above)+.6em}%
 \mathrel{\tikz [|->,baseline=-.75ex]
     \draw (0,0) -- node[below=-2pt] {\box\xrat@below}
        node[above=-2pt] {\box\xrat@above}
      (\xrat@len,0) ;}}
\newcommand{\xleftrightarrows}[2][]{%
 \setbox\xrat@below=\hbox{\ensuremath{\scriptstyle #1}}%
 \setbox\xrat@above=\hbox{\ensuremath{\scriptstyle #2}}%
\pgfmathsetlengthmacro{\xrat@len}{max(\wd\xrat@below,\wd\xrat@above)+.6em}%
 \mathrel{\begin{tikzpicture} [baseline=-.75ex]
     \draw[->] (0,0.05) -- node[above=-2pt] {\box\xrat@above}
      (\xrat@len,0.05) ;
     \draw[<-] (0,-0.05) -- node[below=-2pt] {\box\xrat@below}
      (\xrat@len,-0.05) ;
      \end{tikzpicture}
      }}
\newcommand{\xleftleftarrows}[2][]{%
 \setbox\xrat@below=\hbox{\ensuremath{\scriptstyle #1}}%
 \setbox\xrat@above=\hbox{\ensuremath{\scriptstyle #2}}%
\pgfmathsetlengthmacro{\xrat@len}{max(\wd\xrat@below,\wd\xrat@above)+.6em}%
 \mathrel{\begin{tikzpicture} [baseline=-.75ex]
     \draw[->] (0,0.05) -- node[above=-2pt] {\box\xrat@above}
      (\xrat@len,0.05) ;
     \draw[->] (0,-0.05) -- node[below=-2pt] {\box\xrat@below}
      (\xrat@len,-0.05) ;
      \end{tikzpicture}
      }}
\begin{document}
\title[Involutive quantales and quantale-enriched involutive topological spaces]{Involutive quantales and quantale-enriched\\ involutive topological spaces} 
\author[J. Guti\'errez Garc{\'\i}a]{Javier Guti\'errez Garc{\'\i}a}
\thanks{The first named author acknowledges support from the Basque Government (grant IT1483-22). }
\date{\today}
\address{Departamento de Matem\'aticas, Universidad del Pa\'{\i}s Vasco (UPV/EHU), 48080, Bilbao, SPAIN}
\email{javier.gutierrezgarcia@ehu.eus}
\author[U. H\"ohle]{Ulrich H\"ohle}
\date{\today}
\address{Fakult\"{a}t f\"{u}r Mathematik und Naturwissenchaften, Bergische Universit\"{a}t, D-42097, Wuppertal, GERMANY}
\email{uhoehle@uni-wuppertal.de}
\keywords{Involutive quantale, quantale-enriched topological space, topologization of involutive quantales,  spectrum of a $C^*$-algebra, tensorially involutive quantale, quantic frame}
\maketitle

\let\thefootnote\relax
\footnotetext{MSC2020: Primary 18F75  Secondary 06F07, 18D20.} 

\begin{abstract}
In this paper, we provide a comprehensive analysis of involutive quantales, with a particular focus on quantic frames. We extend the axiomatic foundations of quantale-enriched topological spaces to include closure under the anti-homomorphic involution, facilitating a balanced topologization of the spectrum of unital \donotbreakdash{$C^*$}algebras that encompasses both closed right and left ideals through the concept of quantic frames. Specifically, certain subspaces of pure states are identified as strongly Hausdorff separated, involutive quantale-enriched  topological spaces.
\end{abstract} 
\bigskip

\section*{Introduction}

Quantales are semigroups in the sym\-me\-tric, monoidal, closed category of complete lattices and join-preserving maps. Involutive quantales are quantales with an involution that is both anti-homomorphic and join-preserving. It is well known that not all quantales admit such involutions. 
A typical counterexample is a non-commutative quantale defined on a complete chain. However, every quantale possessing an idempotent element distinct from the universal lower bound (e.g.\ a balanced quantale) can be embedded into an involutive quantale (see\ Remark in Sec.~\ref{sec:2}). This observation highlights the significance of unitalization, through which any quantale can be realized as a subquantale of some involutive quantale.
Therefore, the assumption of an anti-homomorphic and join-preserving involution should not be viewed as a restrictive condition. Rather, it reflects that the quantale in question has an appropriate structural ``size''.

Further, every strongly spatial quantale can be characterized 
by a special type of quantale-enriched topological space --- a so-called quantized topological space 
(cf.\ Proposition~\ref{newproposition6}). 
Since the spectra of non-commutative and unital \donotbreakdash{$C^*$}algebras are expressed by involutive quantales (cf.\ \cite[Def.\ 2.2]{Mulvey01} and \cite[p.\ 157]{EGHK}), this situation motivates the extension of the axiomatic foundations of quantale-enriched topological spaces.

This extension ensures that quantale-enriched topologies are also closed under the anti-homomorphic and join-preserving involution of the underlying unital and involutive quantale.
This approach leads to a change in the concept of subbase and in all those notions depending on subbases ---  e.g.\ weak regularity (cf.\ \cite[Def.\ 6.5]{Arrieta}). 
On the other hand, we gain a natural topologization of the spectrum of a unital \donotbreakdash{$C^*$}algebra $A$, which includes not only all closed right ideals of $A$ 
(as in the approach considered by Mulvey and Pelletier (cf.\ \cite[Sec.~7]{Mulvey02})), but also all closed left ideals of $A$. 
This balanced treatment of closed left and closed right ideals in the topologization process is 
achieved through the application of the important concept of quantic frames (cf.\ \cite[Def.\ 2.5.13]{EGHK}) ---
 a concept which reduces to traditional frames provided the quantale multiplication is commutative. Therefore, from the perspective of applications,  this axiomatic extension of quantale-enriched topologies seems to be reasonable and leads consequently to the concept of involutive quantale-enriched topological spaces.

The paper is organized as follows. We begin with some preliminaries on quantales. In Section~\ref{sec:2}, we present a fundamental theorem on the construction of involutive quantales, originating from \cite[Thm.~2.3.33]{EGHK}. As an application, we introduce the concept of a \emph{tensorially involutive} quantale associated with an involutive and semi-unital quantale. Furthermore, in the case of quantale epimorphisms with an involutive quantale as its domain, we characterize the existence of an anti-homomorphic and join-preserving involution on its codomain. This result transforms the given epimorphism into an involutive quantale homomorphism. As an application we present a formulation of the spectrum of a unital and non-commutative \donotbreakdash{$C^*$}algebra, which is equivalent to the traditional spectrum of \donotbreakdash{$C^*$}algebras in the commutative case. It is interesting to see  that in the framework of the category of balanced and bisymmetric quantales with strong quantale homomorphisms this construction has a categorical base. In this sense, the spectrum of a unital \donotbreakdash{$C^*$}algebra is a \emph{quantic frame}. This means, among other things, that its spectrum is uniquely determined by its left-sided and right-sided elements in the form of a pushout square.

Subsequently, we recall the axioms of a quantale-enriched topological space including the lower separation axioms, and introduce the concept of involutive quantale-enriched topological spaces. Since the quantization of $\boldsymbol{2}$ is an involutive quantale, involutive quantized topological spaces form an important subclass of this new type of structure. It is remarkable to see that the strong Hausdorff separation axiom has a convergent-theoretical characterization in the case of quantized topologies. Moreover, every involutive quantale has a topological representation by an involutive quantized topological space (cf.\ Proposition~\ref{newproposition7}). An application of this construction to  quantic frames, and especially to the spectrum of non-commutative \donotbreakdash{$C^*$}algebras, is given. In the case of the \donotbreakdash{$C^*$}algebra $B(\mathcal{H})$ of all bounded, linear operators on a Hilbert space $\mathcal{H}$, a strongly Hausdorff separated, involutive quantized topological space is presented, whose underlying set depends on the Hilbert space dimension of $\mathcal{H}$.
 
\section{Preliminaries on quantales}
\label{sec:1}

Let \cat{Sup} be the category of complete lattices and join-preserving maps. We recall that $\cat{Sup}$ is a sym\-me\-tric, monoidal closed category. The \donotbreakdash{$2$}chain $\boldsymbol{2}=\set{0,1}$ is the unit object in $\cat{Sup}$. Furthermore, if $X$ and $Y$ are objects of $\cat{Sup}$, then the \donotbreakdash{$XY$}component $X\otimes Y\xrightarrow{\, c_{XY}\,} Y\otimes X$ of the symmetry in $\cat{Sup}$ is uniquely determined on elementary
 tensors of $X\otimes Y$ by $c_{XY}(x\otimes y)=y\otimes x$ for all $x\in X$ and $y\in Y$ (for 
more details see \cite[Sect.~2.1]{EGHK}). The universal upper (resp.\ lower) bound in a complete lattice is always denoted by $\top$ (resp.\ $\bot$).

Further, we recall that a \emph{quantale} is a semigroup in $\cat{Sup}$ (cf.\ \cite[chap.~VII]{MacLane}) --- i.e. 
 a complete lattice provided with an binary and associative operation $\alg{Q}\otimes \alg{Q} \xrightarrow{\, m\,} \alg{Q}$. 
Due to the universal property of the tensor product in $\cat{Sup}$ the operation $m$ can always be identified with a semigroup operation $\alg{Q}\times \alg{Q}\xrightarrow{\, \ast\,} \alg{Q}$ in $\cat{Set}$ which is join-preserving in each variable separately. In the following,
$\ast$ will be called the \emph{quantale multiplication} of $\alg{Q}$, and so the \emph{left-} and \emph{right-implication} corresponding to $\ast$ are given by:
\[\alpha\swarrow\beta=\tbigvee\set{\gamma\in \alg{Q}\mid\gamma\ast \beta\le\alpha}  \quad \text{and}\quad\alpha\searrow\beta=\tbigvee\set{\gamma \in \alg{Q}\mid \alpha\ast \gamma\le \beta},\qquad \alpha,\beta\in \alg{Q}.\] 
An element $\delta$ of a quantale is \emph{dualizing} if $\delta$ satisfies  the following property for all $\alpha\in \alg{Q}$:
\[\delta\swarrow (\alpha\searrow \delta)=\alpha=(\delta\swarrow\alpha)\searrow \delta.\]
A quantale $(\alg{Q},m)$ is \emph{balanced} if its universal upper bound is idempotent --- i.e. $\top\ast \top=\top$.

Let $(\alg{Q},m)$ and $(\alg{R},n)$ be quantales. 
A join-preserving map $\alg{Q}\xrightarrow{\,h\,} \alg{R}$ is called a \emph{quantale homomorphism} if the following diagram is commutative:
\[\begin{tikzcd}[column sep=25pt,row sep=14 pt
]
\alg{Q}\otimes \alg{Q}\arrow[swap]{d}{m}\arrow{r}{h\otimes h}&\alg{R}\otimes \alg{R}\arrow{d}{n}\\
\alg{Q}\arrow{r}{h}&\alg{R}
\end{tikzcd}\]
i.e. if  $h(\alpha\ast\beta)=h(\alpha)\ast h(\beta)$ for each $\alpha,\beta\in\alg{Q}$.
Consequently quantales and quantale homomorphisms form a category denoted by $\cat{Quant}$. A quantale homomorphism $h$ is \emph{strong} if $h$ preserves the respective universal upper bounds --- i.e. $h(\top)=\top$. 

A quantale $(\alg{R},n)$ is a \emph{subquantale} of  $(\alg{Q},m)$ if $\alg{R}$ is a subset of $\alg{Q}$ and the inclusion map $\alg{R}\xhookrightarrow{\,\,\,} \alg{Q}$ is a monomorphism in $\cat{Quant}$.

A join-preserving map $\alg{Q}\xrightarrow{\,h\,} \alg{R}$ is called a \emph{quantale anti-homomorphism} if the following diagram is commutative:
\[\begin{tikzcd}[column sep=25pt,row sep=14 pt
]
\alg{Q}\otimes \alg{Q}\arrow[swap]{d}{m}\arrow{r}{h\otimes h}&\alg{R}\otimes \alg{R}\arrow{r}{c_{\alg{R}\alg{R}}}&\alg{R}\otimes \alg{R}\arrow{d}{n}\\
\alg{Q}\arrow{rr}{h}&&\alg{R}
\end{tikzcd}\]
\noindent 
where $c_{\alg{R}\alg{R}}$ is the \donotbreakdash{$\alg{R}\alg{R}$}component of the symmetry in $\cat{Sup}$ --- i.e.\  if  $h(\alpha\ast\beta)=h(\beta)\ast h(\alpha)$ for each $\alpha,\beta\in\alg{Q}$.

A quantale $(\alg{Q},m)$ is called \emph{involutive} if there exists a quantale anti-homomorphism $\alg{Q}\xrightarrow{\, \ell\,} \alg{Q}$ satisfying $\ell\circ\ell=1_{\alg{Q}}$. Then we write $(\alg{Q},m,\ell)$ and call  $\ell$ also an \emph{involution} on $\alg{Q}$. A  join-preserving map $\alg{Q}\xrightarrow{\,h\,} \alg{R}$ between involutive quantales is involutive if the following diagram is commutative:
\[\begin{tikzcd}[column sep=25pt,row sep=14 pt
]
\alg{Q}\arrow[swap]{d}{\ell_{\alg{Q}}}\arrow{r}{h}&\alg{R}\arrow{d}{\ell_{\alg{R}}}\\
\alg{Q}\arrow{r}{h}&\alg{R}
\end{tikzcd}\]
\noindent  i.e. if $h(\alpha^{\prime})=h(\alpha)^{\prime}$ for each $\alpha\in\alg{Q}$. It is not difficult to show that a join-preserving map $\alg{Q}\xrightarrow{\,h\,} \alg{R}$  is involutive if and only if the right adjoint map $h^{\vdash}$ of $h$ is involutive (cf.\ \cite[Lem.~2.2.14]{EGHK}).
 
An element $\alpha$ of an involutive quantale is called \emph{hermitian} if $\alpha=\alpha^{\prime}$ holds. A subquantale $\alg{S}$ of an involutive quantale $\alg{Q}$ is called \emph{involutive} if $\alpha^{\prime}\in \alg{S}$ for all $\alpha\in \alg{S}$.

Let $(\alg{Q},m)$ be a quantale. 
The \emph{opposite quantale} of $(\alg{Q},m)$ is the quantale $(\alg{Q},m^{op})$ where $m^{op}$ is determined by the following commutative diagram:
\[\begin{tikzcd}[column sep=25pt,row sep=14 pt
]
\alg{Q}\otimes \alg{Q}\arrow[swap]{rd}{m^{\op}}\arrow{r}{c_{\alg{Q}\alg{Q}}}&\alg{Q}\otimes \alg{Q}\arrow{d}{m}\\
&\alg{Q}
\end{tikzcd}\]
In the following we will call $m^{op}$ the \emph{opposite binary operation} of $m$ and denote 
the corresponding quantale multiplication
 by $\ast^{op}$ --- i.e.\  $\alpha\mathop{\ast^{op}}\beta=\beta\ast \alpha$ for each $\alpha,\beta\in\alg{Q}$.
Obviously, $m^{op}$ coincides with $m$ if and only if $(\alg{Q},m)$ is commutative. 

A \emph{unital} quantale $(\alg{Q},m)$ is a monoid in $\cat{Sup}$. We identify the unit $\mathds{1}
\xrightarrow{\,e\,} \alg{Q}$ with the element $e(1)\in \alg{Q}$, which we will also denote by $e$. Therefore, we write $(\alg{Q},m,e)$ for a unital quantale. The tensor product of unital quantales is again unital.

Let $\alg{Q}=(\alg{Q},m)$ be a quantale, and let $\ast$ denote the corresponding quantale multiplication. Then $\alg{Q}$ is said to have the following properties:
\begin{enumerate}[label=\textup{--},leftmargin=20pt,labelwidth=10pt,itemindent=0pt,labelsep=5pt,topsep=5pt,itemsep=3pt
]
\item \emph{left-sided} if every element $\alpha\in \alg{Q}$ is left-sided --- i.e. $\top\ast \alpha\le \alpha$.
\item \emph{right-sided} if every element $\alpha\in \alg{Q}$ is right-sided --- i.e. $  \alpha\ast \top\le \alpha$.
\item \emph{two-sided} if $\alg{Q}$ is left-sided \emph{and}  right-sided.
\item \emph{pre-idempotent} if   every element $\alpha\in \alg{Q}$ is pre-idempotent --- i.e. $\alpha\le \alpha\ast \alpha$.
\item \emph{idempotent} if  every element $\alpha\in \alg{Q}$ is idempotent --- i.e. $\alpha= \alpha\ast \alpha$.
\item \emph{semi-unital} if $\alpha\le \top\ast\alpha$ and $\alpha\le \alpha\ast \top$ for all $\alpha\in \alg{Q}$.
\item \emph{semi-integral} if $\alpha\ast \top\ast\beta\le \alpha\ast \beta$ for all $\alpha,\beta\in \alg{Q}$.
\item \emph{bisym\-me\-tric} if $\alpha\ast \beta_1\ast\beta_2\ast \gamma=\alpha\ast\beta_2\ast \beta_1\ast \gamma$ for all $\alpha,\beta_1,\beta_2,\gamma\in \alg{Q}$.
\end{enumerate}
Every unital (resp.\ pre-idempotent) quantale is semi-unital, every semi-unital quantale is ba\-lanced, and every left-sided (resp.\ right-sided) quantale is semi-integral. The subquantale of $\alg{Q}$ of all left-sided (resp.\ right-sided, two-sided) elements of $\alg{Q}$ is denoted by $\mathds{L}(\alg{Q})$ (resp.\ $\mathds{R}(\alg{Q})$, $\mathds{I}(\alg{Q})$).
A quantale $\alg{Q}$ is a \emph{factor} if $\mathds{I}(\alg{Q})$ contains only the universal bounds of $\alg{Q}$ --- i.e. $\mathds{I}(\alg{Q})=\set{\bot,\top}$. 
\begin{lemma} \label{newlemma1} Let $\alg{Q}$ be a semi-unital and semi-integral  quantale. If $\alg{Q}$ is  a factor, then  the following property holds for all $\alpha,\beta\in \alg{Q}$\textup:
\[ \alpha \ast \gamma \ast \beta= \alpha \ast \beta, \qquad \gamma\in \alg{Q}\setminus\{\bot\}.\]
\end{lemma}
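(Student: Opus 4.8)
The plan is to establish the two inequalities $\alpha\ast\gamma\ast\beta\le\alpha\ast\beta$ and $\alpha\ast\beta\le\alpha\ast\gamma\ast\beta$ separately. The first is the easy one and uses neither the factor hypothesis nor $\gamma\ne\bot$: since $\gamma\le\top$ and $\ast$ is monotone in each variable we have $\alpha\ast\gamma\ast\beta\le\alpha\ast\top\ast\beta$, and semi-integrality (with first factor $\alpha$ and last factor $\beta$) gives $\alpha\ast\top\ast\beta\le\alpha\ast\beta$. Thus the whole content of the lemma lies in the reverse inequality, which is where the factor condition and the assumption $\gamma\ne\bot$ will be used.

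The key step for the reverse inequality is the claim that $\top\ast\gamma\ast\top=\top$ whenever $\gamma\ne\bot$. First, $\top\ast\gamma\ast\top$ is a two-sided element of $\alg{Q}$: every semi-unital quantale is balanced, so $\top\ast\top=\top$, whence $\top\ast(\top\ast\gamma\ast\top)=(\top\ast\top)\ast\gamma\ast\top=\top\ast\gamma\ast\top$, and symmetrically $(\top\ast\gamma\ast\top)\ast\top=\top\ast\gamma\ast\top$. Second, $\top\ast\gamma\ast\top\ne\bot$: by semi-unitality $\gamma\le\top\ast\gamma$, and applying semi-unitality once more to the element $\top\ast\gamma$ gives $\top\ast\gamma\le(\top\ast\gamma)\ast\top=\top\ast\gamma\ast\top$, so $\bot\ne\gamma\le\top\ast\gamma\ast\top$. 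Since $\alg{Q}$ is a factor, $\mathds{I}(\alg{Q})=\set{\bot,\top}$, and the only remaining possibility is $\top\ast\gamma\ast\top=\top$.

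With the claim in hand, the reverse inequality is a short computation. From $\alpha\le\alpha\ast\top$ (semi-unitality) we get $\alpha\ast\beta\le\alpha\ast\top\ast\beta$; substituting $\top=\top\ast\gamma\ast\top$ for the middle $\top$ yields $\alpha\ast\top\ast\beta=\alpha\ast\top\ast\gamma\ast\top\ast\beta$; and two applications of semi-integrality, namely $\alpha\ast\top\ast\gamma\le\alpha\ast\gamma$ followed by $\gamma\ast\top\ast\beta\le\gamma\ast\beta$, bring this down to $\alpha\ast\gamma\ast\beta$. Combining with the first inequality gives $\alpha\ast\gamma\ast\beta=\alpha\ast\beta$ for every $\gamma\in\alg{Q}\setminus\set{\bot}$.

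I expect the only genuine obstacle to be spotting the auxiliary element $\top\ast\gamma\ast\top$: once one notices that it is two-sided and, for $\gamma\ne\bot$, different from $\bot$, the factor hypothesis forces it to equal $\top$, and everything else is routine bookkeeping with monotonicity and the defining inequalities of semi-unitality and semi-integrality.
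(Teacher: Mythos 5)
Your proof is correct and follows essentially the same route as the paper: both establish $\top\ast\gamma\ast\top=\top$ from the factor hypothesis (using that $\top\ast\gamma\ast\top$ is two-sided and, by semi-unitality, bounded below by $\gamma\neq\bot$) and then sandwich $\alpha\ast\gamma\ast\beta$ between $\alpha\ast\beta$ using semi-integrality and semi-unitality. The only difference is that you spell out the intermediate verifications that the paper leaves implicit.
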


\begin{proof} Let us consider $\gamma\in \alg{Q}\setminus\{\bot\}$. Then $\top\ast \gamma \ast \top\in \mathds{I}(\alg{Q})$, and since $\alg{Q}$ is a factor and semi-unital,  we obtain $\top\ast \gamma \ast \top=\top$. Finally, since $\alg{Q}$ is semi-integral and semi-unital, we conclude that
\[\alpha \ast \gamma \ast \beta\le \alpha \ast \top \ast \beta\le \alpha \ast \beta\le \alpha \ast \top\ast \beta=\alpha \ast \top\ast \gamma \ast \top\ast \beta\le \alpha \ast \gamma \ast \beta.\qedhere\]
\end{proof}

Further, $\alg{Q}$ \emph{does not have zero divisors if,} for $\alpha\neq \bot$ and $\beta\neq \bot$, the relation $\alpha \ast \beta\neq \bot$ follows.
\begin{lemma}\label{newlemma2} Let $\alg{Q}$ be a semi-unital quantale.
\begin{enumerate}[label=\textup{(\alph*)},leftmargin=20pt,labelwidth=10pt,itemindent=0pt,labelsep=5pt, topsep=5pt,itemsep=3pt
]
\item \label{(a)} If $\mathds{I}(\alg{Q})$ does not have zero divisors, then $\mathds{L}(\alg{Q})$ \textup(resp.\ $\mathds{R}(\alg{Q})$\textup) also does not have zero divisors.
\item \label{(b)} If $\mathds{L}(\alg{Q})$ does not have zero divisors, then for $\alpha\in \alg{Q}\setminus\{\bot\} $ and for $\beta\in\mathds{L}(\alg{Q})\setminus \{\bot\}$ the relation $\alpha \ast \beta\neq \bot$ holds.
\item \label{(c)} 
 If $\mathds{R}(\alg{Q})$ does not have zero divisors, then for $\alpha\in \mathds{R}(\alg{Q)}\setminus \{\bot\}$ and for $\beta\in \alg{Q}\setminus\{\bot\}$ the relation $\beta \ast \alpha\neq \bot$ holds.
\end{enumerate}
\end{lemma}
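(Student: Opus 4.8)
The plan is to treat all three parts as variations on a single mechanism. In a semi-unital quantale one always has $\alpha\le\top\ast\alpha$ and $\alpha\le\alpha\ast\top$, and since such a quantale is balanced ($\top\ast\top=\top$), the companions $\top\ast\alpha$, $\alpha\ast\top$ and $\top\ast\alpha\ast\top$ are respectively left-sided, right-sided and two-sided, and each differs from $\bot$ whenever $\alpha\ne\bot$. The uniform idea is to replace a factor by one of these saturated companions so that the product lands inside $\mathds{L}(\alg{Q})$, $\mathds{R}(\alg{Q})$ or $\mathds{I}(\alg{Q})$, invoke the relevant no-zero-divisor hypothesis there, and finally strip off the auxiliary $\top$ using that $\xi\le\top\ast\xi$ (resp.\ $\xi\le\xi\ast\top$) forces $\xi=\bot$ as soon as $\top\ast\xi=\bot$ (resp.\ $\xi\ast\top=\bot$).

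For part (a) I would do the left-sided case, the right-sided one being symmetric. Given $\lambda,\mu\in\mathds{L}(\alg{Q})\setminus\{\bot\}$, I first note that $\lambda\ast\top$ and $\mu\ast\top$ lie in $\mathds{I}(\alg{Q})$: they are right-sided automatically, and left-sided because $\top\ast(\lambda\ast\top)=(\top\ast\lambda)\ast\top=\lambda\ast\top$, using $\top\ast\lambda=\lambda$ for the left-sided $\lambda$; and they are nonzero since $\lambda\le\lambda\ast\top$, $\mu\le\mu\ast\top$. As $\mathds{I}(\alg{Q})$ has no zero divisors, $(\lambda\ast\top)\ast(\mu\ast\top)\ne\bot$, and this product equals $\lambda\ast(\top\ast\mu)\ast\top=\lambda\ast\mu\ast\top$, whence $\lambda\ast\mu\ast\top\ne\bot$ and so $\lambda\ast\mu\ne\bot$. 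The right-sided case is identical with $\top\ast\lambda$, $\top\ast\mu$ replacing $\lambda\ast\top$, $\mu\ast\top$.

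For part (b), given $\alpha\in\alg{Q}\setminus\{\bot\}$ and $\beta\in\mathds{L}(\alg{Q})\setminus\{\bot\}$, I saturate the arbitrary factor on the \emph{left}: $\top\ast\alpha\in\mathds{L}(\alg{Q})$, since $\top\ast(\top\ast\alpha)=(\top\ast\top)\ast\alpha=\top\ast\alpha$, and it is nonzero because $\alpha\le\top\ast\alpha$. Both $\top\ast\alpha$ and $\beta$ then lie in $\mathds{L}(\alg{Q})\setminus\{\bot\}$, so the hypothesis yields $(\top\ast\alpha)\ast\beta\ne\bot$. Associativity rewrites this as $\top\ast(\alpha\ast\beta)$, and since $\alpha\ast\beta\le\top\ast(\alpha\ast\beta)$, vanishing of $\alpha\ast\beta$ would force the whole expression to vanish; hence $\alpha\ast\beta\ne\bot$.

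Part (c) is the exact left–right mirror of (b), and the decisive point I would stress is that here the distinguished (right-sided) factor is the \emph{left}-hand factor of the product, which is precisely what makes $\beta\ast\alpha\ne\bot$ the correct dual assertion. Concretely, with the right-sided factor $\beta$ on the left and the arbitrary factor $\alpha$ on the right, I saturate the arbitrary factor on the \emph{right}: $\alpha\ast\top\in\mathds{R}(\alg{Q})$, since $(\alpha\ast\top)\ast\top=\alpha\ast(\top\ast\top)=\alpha\ast\top$, and it is nonzero because $\alpha\le\alpha\ast\top$. Both $\beta$ and $\alpha\ast\top$ lie in $\mathds{R}(\alg{Q})\setminus\{\bot\}$, so the hypothesis gives $\beta\ast(\alpha\ast\top)\ne\bot$; associativity rewrites this as $(\beta\ast\alpha)\ast\top$, and since $\beta\ast\alpha\le(\beta\ast\alpha)\ast\top$, I conclude $\beta\ast\alpha\ne\bot$. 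The same conclusion can be packaged cleanly by applying part (b) to the opposite quantale $(\alg{Q},m^{op})$: it is again semi-unital, $\mathds{L}(\alg{Q},m^{op})=\mathds{R}(\alg{Q})$, and absence of zero divisors is preserved, so (b) for $(\alg{Q},m^{op})$ reads, back in $\alg{Q}$, precisely as $\beta\ast\alpha\ne\bot$. The hard part—and the reason a careless mirror misfires—is the placement of the auxiliary $\top$: it must be appended on the right of the arbitrary factor $\alpha$ so that it reassociates onto $\beta\ast\alpha$ from the outside; appending it on the wrong side would produce the middle insertion $\beta\ast\top\ast\alpha$, which merely dominates $\beta\ast\alpha$ and yields no information about it.
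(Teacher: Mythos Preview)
Your proofs of (a) and (b) are correct and follow exactly the mechanism the paper uses for (a) (the paper argues contrapositively, you argue directly, but the content---pass to the saturated companions $\lambda\ast\top,\mu\ast\top\in\mathds{I}(\alg{Q})$ and then strip the auxiliary $\top$ via semi-unitality---is identical); the paper leaves (b) and (c) to the reader.

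For (c) you should be aware that you are not proving the statement as printed. The lemma as written has $\alpha\in\mathds{R}(\alg{Q})$ and $\beta\in\alg{Q}$ with conclusion $\beta\ast\alpha\ne\bot$, so the right-sided factor sits on the \emph{right}; you explicitly place it on the \emph{left} and prove that version. Your version is indeed the genuine opposite-quantale dual of (b), and your argument for it is correct. The version literally printed, however, is not derivable from semi-unitality alone: in the unital (hence semi-unital) quantale $[L,L]$ of join-preserving self-maps of the diamond $L=\{0,a,b,1\}$, the right-sided elements are the maps $f_c$ given by $f_c(0)=0$ and $f_c(x)=c$ for $x\ne 0$, and one checks $f_c\circ f_d=f_c$ whenever $d\ne 0$, so $\mathds{R}([L,L])$ has no zero divisors; yet the join-preserving map $\beta$ with $\beta(0)=\beta(a)=0$ and $\beta(b)=\beta(1)=b$ satisfies $\beta\circ f_a=\bot$ although $\beta\ne\bot$ and $f_a\in\mathds{R}([L,L])\setminus\{\bot\}$. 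So your reading of (c) is the intended one, and your caution about where to append the auxiliary $\top$ is exactly the point---but you should say plainly that the displayed formula $\beta\ast\alpha$ in (c) must be read with the roles of $\alpha$ and $\beta$ exchanged relative to the text, rather than assert that the right-sided factor is the left-hand one in the statement as printed.
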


\begin{proof} 
Let $\alpha,\beta\in \mathds{L}(\alg{Q})$ such that $\alpha\ast \beta=\bot$. Then  $\alpha\ast \beta\ast \top=\bot$, 
and the left-sidedness of $\beta$ implies $(\alpha \ast \top)\ast (\beta\ast \top)=\bot$. 
Since $\alpha \ast \top$ and $\beta\ast \top$ are two-sided and $\mathds{I}(\alg{Q})$ does not 
have zero divisors, we obtain $\alpha\ast \top=\bot$ or $ \beta\ast \top=\bot$, 
and so $\alpha=\bot$ or $\beta=\bot$ follows --- i.e.  $\mathds{L}(\alg{Q})$ does not have zero divisors. 
The case $\mathds{R}(\alg{Q})$ can be treated analogously.
The verification of (b) and (c) is left to the reader.
\end{proof}
  
A prominent example of a left-sided quantale which does not have zero divisors, is given in Example~\ref{newexample1} infra.

An element $p$ of a quantale $\alg{Q}$ is called \emph{prime} if $p\neq \top$ and for all $\alpha,\beta\in \alg{Q}$ the following implication holds (cf.\ \cite[p.~100]{EGHK}):
\[\text{If }\alpha\ast\beta\le p,\text{ then }\alpha\ast\top\le p\text{ or }\top\ast \beta\le p.\]
A prime element of a quantale $\alg{Q}$ is \emph{strongly prime} if for all $\alpha,\beta\in \alg{Q}$ the stronger implication holds (cf.\ \cite[p.~150]{EGHK}):
\[\text{If }\alpha\ast\beta\le p,\text{ then } \alpha \vee (\alpha\ast \top)\le p \text{ or } \beta \vee (\top\ast \beta)\le p.\] 

The set $\sigma(\alg{Q})$ (resp. $\sigma_s(\alg{Q})$) of all prime  (resp.\ strongly prime) 
elements is called the \emph{spectrum} (resp. \emph{strong spectrum}) of $\alg{Q}$. 
The \emph{hermitian spectrum} of an involutive quantale $(\alg{Q},m,\ell)$ is the set $\sigma_h(\alg{Q})$ of all hermitian and strongly prime elements.

A quantale $\alg{Q}$ is called \emph{strongly spatial} (resp.\ \emph{spatial}), if every element of $\alg{Q}$ is a meet of strongly prime (resp.\ prime) elements of $\alg{Q}$.
\begin{remarks}\label{newremarks1} (1) If $\alg{Q}$ is semi-unital, then every prime element is strongly prime --- i.e.\  $\sigma_s(\alg{Q})= \sigma(\alg{Q})$. Hence any spatial and semi-unital quantale is strongly spatial.\\[1mm]
(2) Let $\widehat{\alg{Q}}$ be the semi-unitalization of $\alg{Q}$ (cf.\ \cite[pp.~98, 150]{EGHK}) --- i.e. $\alg{Q}$ is extended by an idempotent element $\widehat{\top}$, which serves as the universal upper bound in $\widehat{\alg{Q}}=\alg{Q}\cup\{\widehat{\top}\}$ and interacts with $\alg{Q}$ as follows:
\[ \alpha \ast \widehat{\top}:=\alpha \vee (\alpha \ast \top)\quad\text{ and }\quad \widehat{\top}\ast \alpha:= \alpha \vee (\top \ast \alpha), \qquad \alpha \in \alg{Q}.\]
Obviously $\widehat{\alg{Q}}$ is semi-unital, and the universal upper bound $\top$ in $\alg{Q}$ is prime and two-sided in $\widehat{\alg{Q}}$.
Moreover, $\sigma\bigl(\widehat{\alg{Q}}\bigr)=\sigma_s\bigl(\widehat{\alg{Q}}\bigr)=\sigma_s(\alg{Q})\cup\{\top\}$, hence $\alg{Q}$ is strongly spatial if and only if $\widehat{\alg{Q}}$ is  spatial.\\[1mm]
(3) Let $L$ be a complete lattice and let $[L,L]$ be the quantale of all join-preserving self-maps $L\xrightarrow{\,\,\,} L$ ordered pointwise and provided with the composition as quantale multiplication. Then $[L,L]$ is semi-unital and a factor. If $L$ contains at least \emph{three} elements, then $[L,L]$ is \emph{not semi-integral} and the spectrum of $\alg{Q}$ is \emph{empty}.
\end{remarks}

Concerning the \emph{existence} of prime elements, the \emph{semi-integrality} of quantales plays a significant role (see e.g.,\ Remarks~\ref{newremarks1}\,(3)). We quote the following fact (cf. \cite[Thm.~2.3.21]{EGHK}.
\begin{fact} \label{newfact1} Every maximal left-sided \textup(resp.\ right-sided\textup) element of a semi-integral quantale is prime.
\end{fact}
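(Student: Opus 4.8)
The plan is to prove the left-sided case; the right-sided case follows by passing to the opposite quantale $(\alg{Q},m^{op})$, which is again semi-integral (the semi-integrality condition $\alpha\ast\top\ast\beta\le\alpha\ast\beta$ is symmetric under reversing the multiplication), and under which maximal right-sided elements become maximal left-sided elements. So let $p$ be a maximal element of $\mathds{L}(\alg{Q})\setminus\{\top\}$, i.e. a left-sided element with $p\neq\top$ that is maximal with these properties. First I would record that $p\neq\top$ is the first half of primeness, so it remains to verify the implication: if $\alpha\ast\beta\le p$ then $\alpha\ast\top\le p$ or $\top\ast\beta\le p$.

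The key step is to exploit maximality. For any $\alpha\in\alg{Q}$, the element $p\vee(\top\ast\alpha)$ is left-sided: indeed $\top\ast(p\vee(\top\ast\alpha))=(\top\ast p)\vee(\top\ast\top\ast\alpha)\le p\vee(\top\ast\alpha)$, using left-sidedness of $p$ and $\top\ast\top\le\top$. Hence by maximality of $p$, either $p\vee(\top\ast\alpha)=p$ — that is, $\top\ast\alpha\le p$ — or $p\vee(\top\ast\alpha)=\top$. Now suppose $\alpha\ast\beta\le p$ but $\top\ast\beta\not\le p$; I must show $\alpha\ast\top\le p$. By the dichotomy applied to $\beta$, we get $p\vee(\top\ast\beta)=\top$, i.e. $\top=p\vee(\top\ast\beta)$. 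The idea is then to multiply this identity on the left by $\alpha\ast\top$ (or rather compute $\alpha\ast\top\ast(\text{something})$) and use semi-integrality. Concretely: $\alpha\ast\top = \alpha\ast\top\ast\top = \alpha\ast\top\ast\bigl(p\vee(\top\ast\beta)\bigr) = (\alpha\ast\top\ast p)\vee(\alpha\ast\top\ast\top\ast\beta)$. For the first join-and: $\alpha\ast\top\ast p\le\alpha\ast p$ by semi-integrality; but here I need $\alpha\ast p\le p$, which requires $p$ to be right-sided or at least $\alpha\ast p\le p$ — that's not automatic, so I should instead bound $\alpha\ast\top\ast p$ differently. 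Better: use semi-integrality as $\alpha\ast\top\ast p \le \alpha\ast p$ is not directly useful; instead note $\alpha\ast\top\ast p\le \alpha\ast\top\ast\top = \alpha\ast\top$, which is circular. The clean route is: $\alpha\ast\top\ast\top\ast\beta\le\alpha\ast\top\ast\beta\le\alpha\ast\beta\le p$ by applying semi-integrality twice (first collapsing $\top\ast\top\ast\beta$, then $\alpha\ast\top\ast\beta\le\alpha\ast\beta$), and $\alpha\ast\top\ast p\le\alpha\ast\top\ast p$ — here I instead bound using that $\top\ast p\le p$...

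Let me restructure the final computation, since this is the main obstacle. Write $\top=p\vee(\top\ast\beta)$. Multiply on the right by $\top$ and on the left by $\alpha\ast\top$: we need $\alpha\ast\top\le p$. Compute $\alpha\ast\top = \alpha\ast(\top\ast\top) = \alpha\ast\bigl((p\vee(\top\ast\beta))\ast\top\bigr)=\alpha\ast\bigl((p\ast\top)\vee(\top\ast\beta\ast\top)\bigr)=(\alpha\ast p\ast\top)\vee(\alpha\ast\top\ast\beta\ast\top)$. Now $\alpha\ast\top\ast\beta\ast\top\le\alpha\ast\beta\ast\top$ by semi-integrality; and $\alpha\ast\beta\le p$ together with left-sidedness gives $\alpha\ast\beta\ast\top$... no, left-sidedness controls $\top\ast x$, not $x\ast\top$. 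The genuinely delicate point is controlling right-multiplication by $\top$ against a merely left-sided $p$. I expect the intended argument avoids this by using semi-integrality in the form $\alpha\ast\top\ast\beta\le\alpha\ast\beta$ applied to the product $\alpha\ast(\top\ast\beta)$: since $p\vee(\top\ast\beta)=\top$, we have $\alpha\ast\top\le\alpha\ast p\vee\alpha\ast\top\ast\beta$ — wait, $\alpha\ast\top=\alpha\ast(p\vee\top\ast\beta)=(\alpha\ast p)\vee(\alpha\ast\top\ast\beta)\le(\alpha\ast p)\vee(\alpha\ast\beta)\le(\alpha\ast p)\vee p$, using semi-integrality and the hypothesis $\alpha\ast\beta\le p$. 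But $\alpha\ast p\le\alpha\ast\top$, so this only gives $\alpha\ast\top=(\alpha\ast p)\vee p$, hence $p\le\alpha\ast\top$ and $\alpha\ast p\le\alpha\ast\top$ with $\alpha\ast\top=(\alpha\ast p)\vee p$. Thus $\alpha\ast\top$ is the smallest left-sided element above both $p$ and $\alpha\ast p$; if $\alpha\ast p$ happens to be $\le p$ we'd be done by maximality, but in general we should instead observe $\top\ast(\alpha\ast p)=(\top\ast\alpha)\ast p$ and run the dichotomy on $\top\ast\alpha$ as well. So the real plan: apply the left-sided dichotomy to \emph{both} $\top\ast\alpha$ and $\top\ast\beta$; if $\top\ast\beta\le p$ we are done, and if not then $p\vee(\top\ast\beta)=\top$, whence $\alpha\ast\top\le(\alpha\ast p)\vee p$ as above; then separately, either $\top\ast\alpha\le p$, giving $\alpha\ast p\le\top\ast\alpha\ast\top$... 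The cleanest finish is: from $p\vee(\top\ast\beta)=\top$ deduce $\alpha\ast\top = \alpha\ast p \vee \alpha\ast\top\ast\beta \le \alpha\ast\top\ast p\,\vee\,\alpha\ast\beta$ — no. I will present it as: $\alpha\ast\top\ast\beta\le\alpha\ast\beta\le p$ (semi-integrality + hypothesis), and $\alpha\ast p$: note $p\le\top$ so $\alpha\ast p\le\alpha\ast\top$, and $\top\ast(\alpha\ast\top)=(\top\ast\alpha\ast\top)\le\alpha\ast\top$ by semi-integrality applied with the middle — actually $\top\ast\alpha\ast\top$: semi-integrality gives $\top\ast\alpha\ast\top\le\top\ast\alpha$? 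No, it gives $x\ast\top\ast y\le x\ast y$, so with $x=\top,y=\top$: $\top\ast\alpha\ast\top\le\top\ast\alpha$ is wrong form. The correct instance with $x=\alpha$, $y=\top$, middle-$\top$: $\alpha\ast\top\ast\top\le\alpha\ast\top$, trivial. I concede the bookkeeping here is the crux; the main obstacle is precisely assembling the right sequence of semi-integrality applications so that $\alpha\ast\top$ collapses into $p$, and I would spend the bulk of the write-up making that chain $\alpha\ast\top = \alpha\ast(p\vee\top\ast\beta)\ast\top = (\alpha\ast p\ast\top)\vee(\alpha\ast\top\ast\beta\ast\top)\le(\alpha\ast\top)\wedge\text{(bounded by }p)$ airtight, using that $p$ left-sided and semi-integrality twice on the second summand ($\alpha\ast\top\ast\beta\ast\top\le\alpha\ast\beta\ast\top\le\alpha\ast\top\ast\beta\le\alpha\ast\beta\le p$? — no, $\alpha\ast\beta\ast\top$ need not be $\le\alpha\ast\top\ast\beta$) — so ultimately I would invoke the left-sided dichotomy a \emph{third} time, on $\alpha\ast\beta\ast\top$, to close the argument, and this triple use of maximality is what I expect to be the delicate heart of the proof.
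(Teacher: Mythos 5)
Your overall strategy is the right one --- reduce to the left-sided case via the opposite quantale (that reduction is sound: semi-integrality and primeness are self-opposite notions), observe that $p\vee(\top\ast\beta)$ is again left-sided, invoke maximality to get the dichotomy ``$\top\ast\beta\le p$ or $p\vee(\top\ast\beta)=\top$'', and in the second case expand $\alpha\ast\top=\alpha\ast\bigl(p\vee(\top\ast\beta)\bigr)=(\alpha\ast p)\vee(\alpha\ast\top\ast\beta)$, bounding the second term by $\alpha\ast\beta\le p$ via semi-integrality. You get exactly this far. But then you stall on the term $\alpha\ast p$, asserting that $\alpha\ast p\le p$ ``requires $p$ to be right-sided'' and is ``not automatic''. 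That is the gap, and it is a false alarm: right-sidedness of $p$ would control $p\ast\alpha$, whereas $\alpha\ast p$ is controlled by \emph{left}-sidedness, since $\ast$ is isotone in each argument and hence
\[
\alpha\ast p\;\le\;\top\ast p\;\le\;p.
\]
With this one-line observation the proof closes immediately:
\[
\alpha\ast\top=(\alpha\ast p)\vee(\alpha\ast\top\ast\beta)\le(\top\ast p)\vee(\alpha\ast\beta)\le p\vee p=p,
\]
so $p$ is prime. A single application of maximality suffices; the ``triple use of maximality'' and the various attempts to right-multiply by $\top$ that occupy the second half of your write-up are unnecessary detours caused by the missed bound. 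As written, the proposal does not constitute a proof --- it ends in speculation rather than a completed argument. (Note that the paper itself only quotes this statement from \cite[Thm.~2.3.21]{EGHK} without proof, so there is no in-paper argument to compare against; the standard proof is the one just sketched.)
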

 
Recall that every spatial quantale is semi-integral and bisym\-me\-tric (cf. \cite[Lem.~1.3\,(i)]{GH24}).
The next proposition summarizes some  properties of strongly spatial quantales.
\begin{proposition}\label{newproposition1} Let $\alg{Q}$ be strongly spatial. Then\textup:
\begin{enumerate}[label=\textup{(\roman*)},leftmargin=20pt,labelwidth=10pt,itemindent=0pt,labelsep=5pt,topsep=5pt,itemsep=3pt]
\item  $\alg{Q}$ is semi-unital.
\item $\alg{Q}$ is pre-idempotent.
\item If $\alg{Q}$ is either left-sided or right-sided, then $\alg{Q}$ is idempotent.
\end{enumerate}
\end{proposition}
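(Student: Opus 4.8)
The plan is to derive all three parts from the interaction between strong spatiality and the defining implication of strongly prime elements. The organizing observation I would use throughout is that, for any $x\in\alg{Q}$, strong spatiality yields $x=\bigwedge\set{p\in\sigma_s(\alg{Q})\mid x\le p}$ (if no strongly prime element lies above $x$, this meet is empty and $x=\top$, a case in which the inequalities below are automatic). Consequently, to prove an inequality of the form $x\le y$ it suffices to show $x\le p$ for every strongly prime $p$ with $y\le p$, since then $x$ is below the meet defining $y$.

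For (i), I would fix $x\in\alg{Q}$ and, to obtain $x\le x\ast\top$, take an arbitrary strongly prime $p$ with $x\ast\top\le p$ and feed the inequality $x\ast\top\le p$ into the strongly prime implication. This gives $x\vee(x\ast\top)\le p$ or $\top\vee(\top\ast\top)\le p$; the second alternative forces $\top\le p$, contradicting $p\neq\top$, so $x\le p$. Taking the meet over all such $p$ yields $x\le x\ast\top$, and the inequality $x\le\top\ast x$ follows symmetrically by applying the implication to $\top\ast x\le p$ and again discarding the branch containing $\top$. This gives semi-unitality.

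For (ii), I would fix $x\in\alg{Q}$ and, for $x\le x\ast x$, take a strongly prime $p$ with $x\ast x\le p$; applying the strongly prime implication to $x\ast x\le p$ gives $x\vee(x\ast\top)\le p$ or $x\vee(\top\ast x)\le p$, and each alternative already contains $x\le p$, so pre-idempotency follows by meeting over all such $p$. For (iii), part (ii) supplies $x\le x\ast x$, and the reverse inequality is immediate from monotonicity of $\ast$ together with $x\le\top$: in the left-sided case $x\ast x\le\top\ast x\le x$, and in the right-sided case $x\ast x\le x\ast\top\le x$; either way $x=x\ast x$.

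I do not anticipate a genuine obstacle; the only point demanding care is to recall that primeness already incorporates the requirement $p\neq\top$, which is exactly what eliminates the degenerate branch $\top\vee(\top\ast\top)\le p$ in part (i). Without that branch being excluded the argument for (i) would collapse — indeed mere spatiality, which furnishes only the weaker prime implication, is not enough to force semi-unitality — so it is worth stressing this point, together with the empty-meet edge case noted above, to make the reasoning airtight.
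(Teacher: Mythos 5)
Your proof is correct and, for part (i), follows exactly the paper's argument: every strongly prime $p$ lying above $\alpha\ast\top$ must lie above $\alpha$ (the other branch of the strongly-prime implication would force $\top\le p$, contradicting $p\neq\top$), so strong spatiality gives $\alpha\le\alpha\ast\top$, and dually for $\top\ast\alpha$. For (ii) and (iii) the paper simply invokes \cite[Lem.~1.3\,(ii) and (iii)]{GH24}, whereas you supply the direct arguments (taking $\alpha=\beta=x$ in the strongly-prime implication for (ii), and combining pre-idempotency with one-sidedness for (iii)); these are correct and make the proof self-contained.
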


\begin{proof}
To verify (i), we fix $\alpha\in \alg{Q}$ and observe that if $p$ is strongly prime in $\alg{Q}$ such that $\alpha\ast\top\le p$ then $\alpha\le p$. Consequently, 
 $\alpha\le\tbigwedge\set{p\in \sigma_s(\alg{Q})\mid \alpha\ast\top\le p}=\alpha\ast\top$.
 Analogously, we can verify $\alpha\le\top  \ast \alpha$. 
The remaining properties follow from (i) and  \cite[Lem.~1.3\,(ii) and (iii)]{GH24}.
\end{proof}

With regard to Proposition~\ref{newproposition1}\,(i) the following equivalence holds.

\begin{corollary} A quantale $\alg{Q}$ is strongly spatial if and only if it is spatial and semi-unital.
\end{corollary}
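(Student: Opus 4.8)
The plan is to prove the two implications separately, each reducing directly to a result recalled above. For the forward direction, suppose $\alg{Q}$ is strongly spatial. Then semi-unitality is exactly Proposition~\ref{newproposition1}\,(i), so the only remaining point is spatiality. Here I would note that the defining implication for a strongly prime element is strictly stronger than the one for a prime element: if $\alpha\ast\beta\le p$, then $\alpha\vee(\alpha\ast\top)\le p$ forces $\alpha\ast\top\le p$, and likewise $\beta\vee(\top\ast\beta)\le p$ forces $\top\ast\beta\le p$. Hence $\sigma_s(\alg{Q})\subseteq\sigma(\alg{Q})$, and every element of $\alg{Q}$, being a meet of strongly prime elements, is in particular a meet of prime elements; thus $\alg{Q}$ is spatial.

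For the converse, suppose $\alg{Q}$ is spatial and semi-unital. By Remarks~\ref{newremarks1}\,(1), semi-unitality yields $\sigma_s(\alg{Q})=\sigma(\alg{Q})$. Consequently, every element of $\alg{Q}$, being a meet of prime elements by spatiality, is a meet of strongly prime elements, which is precisely the assertion that $\alg{Q}$ is strongly spatial.

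Since both halves reduce to already-established facts --- Proposition~\ref{newproposition1}\,(i) and Remarks~\ref{newremarks1}\,(1) --- together with the elementary observation that strongly prime implies prime, I do not anticipate any genuine obstacle; the statement is essentially a repackaging of these ingredients, recorded here because it gives a clean characterization of strong spatiality that will be convenient in the sequel.
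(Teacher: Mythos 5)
Your proof is correct and follows the same route the paper intends: the forward direction is Proposition~\ref{newproposition1}\,(i) together with the (definitional) fact that strongly prime elements are prime, and the converse is exactly the observation recorded in Remarks~\ref{newremarks1}\,(1). The paper leaves the corollary as an immediate consequence of these two facts, so your write-up simply makes that explicit.
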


\begin{comment*} There exist spatial quantales, which are \emph{not} semi-unital. 
The simplest example is the trivial quantale on the \donotbreakdash{$3$}chain $C_3=\set{\bot,a,\top}$ with $\bot < a < \top$ --- i.e.\  the quantale multiplication is determined by $\top\ast\top=\bot$. Since $\bot$ and $a$ are prime elements, it follows that it is a spatial quantale. However,  $\bot$ is not strongly prime, hence the quantale fails to be strongly spatial.
\end{comment*}

Before we proceed we present an example illustrating the previous results.
\begin{example}\label{newexample1}  Let $\mathcal{H}$ be an infinite dimensional Hilbert space and $B(\mathcal{H})$ be the unital \donotbreakdash{$C^*$}algebra of all bounded linear operators on $\mathcal{H}$. Further, let $\mathds{L}(B(\mathcal{H}))$ be the complete lattice of all closed left ideals of $B(\mathcal{H})$ ordered by set-inclusion and provided  with the usual ideal multiplication:
\[I_1\ast I_2= \text{top.\ closure}\bigl(\bigset{\tsum\limits_{i=1}^n f_i\cdot g_i\mid  n\in\mathds{N},\,f_i\in I_1,\, g_i\in I_2}\bigr),\qquad I_1,I_2\in \mathds{L}(B(\mathcal{H})).\]
Then $\mathds{L}(B(\mathcal{H}))=(\mathds{L}(B(\mathcal{H})),\ast)$ is a left-sided quantale. With regard to {Fact}~\ref{newfact1}, every \emph{maximal} left ideal is \emph{prime}. Since every closed left ideal of $B(\mathcal{H})$ is an intersection of maximal left ideals (cf.\ \cite[Thm.~10.2.10\,(iii)]{Kadison2}), the quantale $\mathds{L}(B(\mathcal{H}))$ is  spatial. Now, using the fact that $B(\mathcal{H})$ is unital, we conclude that $\mathds{L}(B(\mathcal{H}))$ is \emph{semi-unital}. Hence $\mathds{L}(B(\mathcal{H}))$ is strongly spatial, and consequently, Proposition~\ref{newproposition1}\,(iii) implies that the ideal multiplication is idempotent.

We now show that the ideal multiplication does not have zero divisors. With regard  to Lemma~\ref{newlemma2}\,(a), it is sufficient to verify that the subquantale $\mathds{I}(B(\mathcal{H}))$ of all closed two-sided ideals of $B(\mathcal{H})$ does not have zero divisors. Since every non-zero closed two-sided ideal contains the closed two-sided ideal of all compact operators acting on $\mathcal{H}$ (cf.\ \cite[p.~747]{Kadison2}), the property of not having zero divisors follows immediately from the idempotency of the ideal multiplication. Hence, the zero ideal is a (strongly) prime element of $\mathds{L}(B(\mathcal{H}))$. Analogously, it can be shown that the quantale $\mathds{R}(B(\mathcal{H}))$  of all closed right ideals does not have zero divisors, and so the zero ideal is also a (strongly) prime element of $\mathds{R}(B(\mathcal{H}))$.
 
Further, it is well known that every unit vector $x\in \mathcal{H}$ induces a pure state of $B(\mathcal{H})$ by (cf.\ \cite[Exercise~4.6.68]{Kadison1}):
\[B(\mathcal{H})\xrightarrow{\,\omega_x\,} \mathds{C},\quad \omega_x(f)=\langle f(x),x\rangle, \qquad f\in B(\mathcal{H}),\]
where $\langle\,\,\,,\,\,\,\rangle$ is the inner product of $\mathcal{H}$. Since the one-to-one relationship between pure states $\varrho$ and maximal left ideals $I$ of $B(\mathcal{H})$ is given by means of the construction of left kernels (cf.\ \cite[Thm.~10.2.10\,(i) and (ii)]{Kadison2}):
\[I_{\varrho}=\set{f\in B(\mathcal{H})\mid \varrho(f^*\cdot f)=0},\]
every vector state $\omega_x$ determines a maximal left ideal $I_x$ by
\[I_x=\set{f\in B(\mathcal{H})\mid \langle (f^*\cdot f)(x),x\rangle=0}=\set{f\in B(\mathcal{H})\mid f(x)=\vec{0}}.\]
\end{example}

We end this section by recalling that the tensor product of  quantales $(\alg{Q},m)$ and $(\alg{R},n)$ is given by the tensor product $\alg{Q}\otimes \alg{R}$ of the underlying lattices, provided with an appropriate binary and associative operation 
\[(\alg{Q}\otimes \alg{R})\otimes  (\alg{Q}\otimes \alg{R}) \xrightarrow{\,m\otimes n,} \alg{Q}\otimes \alg{R}.\]
The  quantale multiplication $\star$ corresponding to $m\otimes n$ is uniquely determined on elementary tensors of $\alg{Q}\otimes \alg{R}$ as follows (cf.\ \cite[p.~92]{EGHK}):
\[(\alpha_1\otimes \beta_1)\star (\alpha_2\otimes \beta_2)= (\alpha_1\ast \alpha_2)\otimes (\beta_1\ast \beta_2),\qquad \alpha_1,\alpha_2\in \alg{Q},\, \beta_1,\beta_2\in \alg{R}.\]
The \emph{canonical} embeddings $\alg{Q}\xrightarrow{\, j_{\alg{Q}}\,} \alg{Q}\otimes \alg{R}$ and $\alg{R}\xrightarrow{\,j_{\alg{R}}\,} \alg{Q}\otimes \alg{R}$, defined  by
\[j_{\alg{Q}}(\alpha)= \alpha\otimes \top\quad \text{and}\quad j_{\alg{R}}(\beta)=\top\otimes \beta,\qquad \alpha\in \alg{Q},\,\beta\in \alg{R},\]
 are strong quantale homomorphisms if and only if both quantales $(\alg{Q},m)$ and $(\alg{R},n)$ are balanced.

The following permanence properties of the tensor product of quantales hold. If both quantales are pre-idempotent (resp.\ semi-unital, semi-integral, bisym\-me\-tric, and being a factor),
 then 
their tensor product again is pre-idempotent (resp.\ semi-unital, semi-integral, bisym\-me\-tric and a factor). 
In particular, the tensor product of a left-sided quantale with a right-sided quantale is always semi-integral. 

Finally, we recall a characterization of prime elements in the tensor product of quantales (cf.\ \cite[Lem.~2.3.22 and Thm.~2.3.25]{EGHK}).
\begin{theorem} \label{newtheorem1} Let $(\alg{Q},m)$ and $(\alg{R},n)$ be semi-unital quantales such that $(\alg{Q},m)$ is left-sided and $(\alg{R},n)$ is right-sided. Then an element $p\in \alg{Q}\otimes \alg{R}$ is a prime element of $(\alg{Q}\otimes \alg{R}, m\otimes n)$ if and only if there exist  prime elements $q$ in $(\alg{Q},m)$ and  $r$ in $(\alg{R},n)$ such that $p=(q\otimes \top)\vee (\top\otimes r)$. Moreover, $q$ and $r$ are uniquely determined by $p$.
\end{theorem}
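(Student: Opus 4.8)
The plan is to reduce everything to elementary tensors together with one $\boldsymbol{2}$-valued test map. Write $\alg{S}=\alg{Q}\otimes\alg{R}$ and let $\star$ be its quantale multiplication, so $(x\otimes y)\star(z\otimes w)=(x\ast z)\otimes(y\ast w)$. Since $\alg{Q}$ is left-sided and semi-unital, $\top\ast\alpha=\alpha$ for all $\alpha\in\alg{Q}$; since $\alg{R}$ is right-sided and semi-unital, $\beta\ast\top=\beta$ for all $\beta\in\alg{R}$; and both quantales are balanced (being semi-unital), so $\top\ast\top=\top$ in each. Hence on elementary tensors
\[(\top\otimes y)\star(x\otimes\top)=x\otimes y,\qquad (x\otimes\top)\star(z\otimes\top)=(x\ast z)\otimes\top,\]
\[(x\otimes y)\star(\top\otimes\top)=(x\ast\top)\otimes y,\qquad (\top\otimes\top)\star(x\otimes y)=x\otimes(\top\ast y).\]
I would also use the strong quantale homomorphisms $j_{\alg{Q}},j_{\alg{R}}$ into $\alg{S}$ and set $q_w=\tbigvee\set{\alpha\in\alg{Q}\mid \alpha\otimes\top\le w}$, $r_w=\tbigvee\set{\beta\in\alg{R}\mid \top\otimes\beta\le w}$ for $w\in\alg{S}$; since $j_{\alg{Q}},j_{\alg{R}}$ preserve joins, $(q_w\otimes\top)\vee(\top\otimes r_w)\le w$ always. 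Finally, since every element of $\alg{S}$ is a join of elementary tensors and $\star$, $(-)\star\top$, $\top\star(-)$ preserve joins in each argument, it is enough to verify the defining implication of a prime element for $u,v$ elementary tensors (splitting the relevant joins, and using that $u\star\top$ and $\top\star v$ do not depend on the other variable).

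The central step I would establish is the following lemma: \emph{for $q\in\sigma(\alg{Q})$ and $r\in\sigma(\alg{R})$, the element $p=(q\otimes\top)\vee(\top\otimes r)$ satisfies $p\neq\top$, and for all $x\in\alg{Q}$, $y\in\alg{R}$ one has $x\otimes y\le p$ if and only if $x\le q$ or $y\le r$.} To prove it, observe that $(\alpha,\beta)\mapsto 1$ when $\alpha\not\le q$ and $\beta\not\le r$, and $0$ otherwise, is a bimorphism $\alg{Q}\times\alg{R}\to\boldsymbol{2}$ (join-preservation in each variable reduces to ``$\tbigvee_i\alpha_i\not\le q$ iff some $\alpha_i\not\le q$''), hence factors through a join-preserving map $h\colon\alg{S}\to\boldsymbol{2}$. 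Because $q\neq\top\neq r$, one gets $h(\top\otimes\top)=1$ while $h(q\otimes\top)=h(\top\otimes r)=0$, so $h(p)=0$; this gives $p\neq\top$, and if $x\not\le q$ and $y\not\le r$ then $h(x\otimes y)=1>0=h(p)$, so $x\otimes y\not\le p$ (the nontrivial direction; the converse is monotonicity).

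For the ``if'' direction, let $q\in\sigma(\alg{Q})$, $r\in\sigma(\alg{R})$ and $p=(q\otimes\top)\vee(\top\otimes r)$; the lemma gives $p\neq\top$. Take elementary $u=a\otimes b$, $v=c\otimes d$ with $u\star v=(a\ast c)\otimes(b\ast d)\le p$; by the lemma either $a\ast c\le q$ or $b\ast d\le r$. If $a\ast c\le q$, primeness of $q$ yields $a\ast\top\le q$ or $\top\ast c=c\le q$; then $u\star\top=(a\ast\top)\otimes b\le q\otimes\top\le p$ in the first subcase, and $\top\star v=c\otimes(\top\ast d)\le q\otimes\top\le p$ in the second. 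The case $b\ast d\le r$ is symmetric, using $b\ast\top=b$, primeness of $r$, and $\top\otimes r\le p$. Hence $u\star\top\le p$ or $\top\star v\le p$, so $p$ is prime.

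For the ``only if'' direction and uniqueness, let $p\in\sigma(\alg{S})$ and put $q=q_p$, $r=r_p$, so $(q\otimes\top)\vee(\top\otimes r)\le p$. If $q=\top$ then $\top\otimes\top\le p$, contradicting $p\neq\top$; so $q\neq\top$, and likewise $r\neq\top$. To see $q$ is prime: from $a\ast c\le q$ we get $(a\otimes\top)\star(c\otimes\top)=(a\ast c)\otimes\top\le p$, so primeness of $p$ gives $(a\ast\top)\otimes\top\le p$ (hence $a\ast\top\le q$) or $(\top\otimes\top)\star(c\otimes\top)=c\otimes\top\le p$ (hence $\top\ast c=c\le q$); symmetrically $r$ is prime in $\alg{R}$. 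For $p\le(q\otimes\top)\vee(\top\otimes r)$ it suffices, by the lemma, to show each elementary tensor $x\otimes y\le p$ has $x\le q$ or $y\le r$: if not, then $x\otimes y=(\top\otimes y)\star(x\otimes\top)\le p$ forces, by primeness of $p$, either $\top\otimes y\le p$ (so $y\le r$) or $x\otimes\top\le p$ (so $x\le q$), a contradiction. Hence $p=(q\otimes\top)\vee(\top\otimes r)$. Uniqueness is then immediate from the lemma: if also $p=(q'\otimes\top)\vee(\top\otimes r')$ with $q'\in\sigma(\alg{Q})$, $r'\in\sigma(\alg{R})$, then $q'\le q_p=q$, while every $\alpha$ with $\alpha\otimes\top\le p$ satisfies $\alpha\le q'$ (since $\top\le r'$ is impossible), so $q=q_p\le q'$; thus $q'=q$ and, symmetrically, $r'=r$. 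I expect the main obstacle to be the lemma, and within it the construction of the test map $h$ together with the identity $x\otimes y=(\top\otimes y)\star(x\otimes\top)$ — this is precisely where left-sidedness of $\alg{Q}$, right-sidedness of $\alg{R}$, and semi-unitality are consumed; the rest is bookkeeping with the four product identities and the definition of a prime element.
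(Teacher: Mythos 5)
Your proof is correct. The paper itself gives no argument for this statement --- it is quoted from \cite[Lem.~2.3.22 and Thm.~2.3.25]{EGHK} --- so there is no in-text proof to compare against; but your self-contained argument is sound and is essentially the standard one. The two hypotheses are consumed exactly where you say: semi-unitality plus one-sidedness give $\top\ast\alpha=\alpha$ in $\alg{Q}$ and $\beta\ast\top=\beta$ in $\alg{R}$, whence the identity $x\otimes y=(\top\otimes y)\star(x\otimes\top)$ and the four product formulas, and the $\boldsymbol{2}$-valued test map obtained from the bimorphism $(\alpha,\beta)\mapsto[\alpha\not\le q\text{ and }\beta\not\le r]$ is precisely the right device to show that $(q\otimes\top)\vee(\top\otimes r)$ lies above $x\otimes y$ only when $x\le q$ or $y\le r$. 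The reduction of the primality test to elementary tensors, the verification that $q_p$ and $r_p$ are prime, and the uniqueness argument via $q'\le q_p\le q'$ all check out. The only place worth spelling out a little more is the reduction to elementary tensors: from $u_i\star v_j\le p$ for all $i,j$ one gets, for each pair, $u_i\star\top\le p$ or $\top\star v_j\le p$, and a single offending pair $(i_0,j_0)$ would contradict this --- your parenthetical gestures at exactly this, so it is a matter of exposition rather than a gap.
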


\section{On the existence of involutive quantales}
\label{sec:2}

Since every commutative quantale is involutive, the existence of non-commutative but involutive quantales represents an interesting feature in the theory of quantales.
 
It is well known that with every quantale homomorphism $(\alg{Q},m)\xrightarrow{\, h\,} (\alg{R},n)$, we can associate a nucleus $c$ on $(\alg{Q},m_{\alg{}})$ by
\[c(\alpha)=h^{\vdash}(h(\alpha)),\qquad \alpha\in \alg{Q},\]
where $\alg{R}\xrightarrow{\,h^{\vdash}\,} \alg{Q}$ is the right adjoint map of $h$. A nucleus on an involutive quantale $(\alg{Q},m,\ell)$ is involutive if and only if $c(\alpha)^{\prime}\le c(\alpha^{\prime})$ for all $\alpha\in \alg{Q}$.
\begin{proposition} \label{newproposition2}Let $(\alg{Q},m,\ell)$ be an involutive quantale, and let $(\alg{Q},m)\xrightarrow{\, \pi\,} (\alg{R},n)$ be an epimorphism in the sense of $\cat{Quant}$ \textup(in particular, $\pi$ is surjective as map\textup). 

Then the nucleus associated with $\pi$ is involutive if and only if there exists an involution on $(\alg{R},n)$ such that $\pi$ is an involutive quantale homomorphism.
\end{proposition}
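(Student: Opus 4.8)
The plan is to reduce the whole statement to one elementary observation: the inequality defining an involutive nucleus is self-improving. Write $c=\pi^{\vdash}\circ\pi$ for the nucleus associated with $\pi$, where $\alg{R}\xrightarrow{\,\pi^{\vdash}\,}\alg{Q}$ is the right adjoint of $\pi$. Suppose $c(\alpha)^{\prime}\le c(\alpha^{\prime})$ for every $\alpha\in\alg{Q}$. Substituting $\alpha^{\prime}$ for $\alpha$ gives $c(\alpha^{\prime})^{\prime}\le c(\alpha)$, and applying $\ell_{\alg{Q}}$ (which is monotone, being join-preserving) to both sides, together with $\alpha^{\prime\prime}=\alpha$, yields $c(\alpha^{\prime})\le c(\alpha)^{\prime}$. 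Hence ``$c$ is involutive'' is equivalent to the \emph{equality} $c(\alpha)^{\prime}=c(\alpha^{\prime})$ for all $\alpha\in\alg{Q}$, and it is this symmetric form I would work with throughout.

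For the implication ``a compatible involution on $\alg{R}$ exists $\Rightarrow$ $c$ is involutive'', assume $(\alg{R},n)$ carries an involution $\ell_{\alg{R}}$ (write $v^{\prime}:=\ell_{\alg{R}}(v)$) making $\pi$ an involutive quantale homomorphism, i.e.\ $\pi(\alpha^{\prime})=\pi(\alpha)^{\prime}$. By the quoted fact that a join-preserving map between involutive quantales is involutive if and only if its right adjoint is (cf.\ \cite[Lem.~2.2.14]{EGHK}), the map $\pi^{\vdash}$ is involutive as well, so
\[c(\alpha)^{\prime}=\pi^{\vdash}\!\bigl(\pi(\alpha)\bigr)^{\prime}=\pi^{\vdash}\!\bigl(\pi(\alpha)^{\prime}\bigr)=\pi^{\vdash}\!\bigl(\pi(\alpha^{\prime})\bigr)=c(\alpha^{\prime})\]
for every $\alpha\in\alg{Q}$; in particular $c(\alpha)^{\prime}\le c(\alpha^{\prime})$, so $c$ is involutive.

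For the converse, suppose $c$ is involutive, so that $c(\alpha)^{\prime}=c(\alpha^{\prime})$ for all $\alpha$ by the first paragraph. Since $\pi$ is surjective, $\pi\circ\pi^{\vdash}=1_{\alg{R}}$, and therefore $\pi(\alpha)=\pi(\beta)$ if and only if $c(\alpha)=c(\beta)$. I would then define an operation on $\alg{R}$ by $\pi(\alpha)^{\prime}:=\pi(\alpha^{\prime})$. The hypothesis is used precisely in checking that this is well defined: if $\pi(\alpha)=\pi(\beta)$ then $c(\alpha)=c(\beta)$, whence $c(\alpha^{\prime})=c(\alpha)^{\prime}=c(\beta)^{\prime}=c(\beta^{\prime})$ and thus $\pi(\alpha^{\prime})=\pi(\beta^{\prime})$. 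Everything else is a routine transfer of the properties of $\ell_{\alg{Q}}$ across the surjection $\pi$: join-preservation of $(-)^{\prime}$ on $\alg{R}$ follows from join-preservation of $\pi$ and of $\ell_{\alg{Q}}$, after writing an arbitrary family in $\alg{R}$ as $\pi$ applied to a family in $\alg{Q}$; the anti-homomorphism law $\bigl(\pi(\alpha)\ast\pi(\beta)\bigr)^{\prime}=\pi(\beta)^{\prime}\ast\pi(\alpha)^{\prime}$ follows from $\pi(\alpha\ast\beta)=\pi(\alpha)\ast\pi(\beta)$ together with $(\alpha\ast\beta)^{\prime}=\beta^{\prime}\ast\alpha^{\prime}$; the identity $\pi(\alpha)^{\prime\prime}=\pi(\alpha)$ follows from $\alpha^{\prime\prime}=\alpha$; and $\pi$ is involutive for this structure by construction.

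The only genuine step is the passage from the defining inequality to the equality $c(\alpha)^{\prime}=c(\alpha^{\prime})$ — equivalently, the well-definedness of the candidate involution on $\alg{R}$ — which is exactly where the hypothesis is consumed; the remainder is bookkeeping with the adjunction $\pi\dashv\pi^{\vdash}$ and with surjectivity.
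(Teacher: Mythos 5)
Your proof is correct and follows essentially the same route as the paper: in the necessity direction you descend $\ell_{\alg{Q}}$ along the surjection $\pi$, and your well-definedness check of $\pi(\alpha)^{\prime}:=\pi(\alpha^{\prime})$ is exactly the paper's computation $\ell_{\alg{R}}\circ\pi=\pi\circ\ell\circ c=\pi\circ c\circ\ell=\pi\circ\ell$ for its explicitly defined $\ell_{\alg{R}}=\pi\circ\ell\circ\pi^{\vdash}$ (the two definitions give the same map). Your preliminary observation that the defining inequality for an involutive nucleus self-improves to an equality, and your use of \cite[Lem.~2.2.14]{EGHK} for the sufficiency direction, are both sound and merely make explicit what the paper leaves implicit.
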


\begin{proof} Sufficiency is evident. To verify necessity, 
we proceed as follows. Let 
$c$ be the nucleus associated with $\pi$. Define the map $\alg{R} \xrightarrow{\,\ell_{\alg{R}}\,} \alg{R}$ by
$
\ell_{\alg{R}}:= \pi \circ\ell\circ \pi^{\vdash}.
$
This map is obviously isotone. Since $c$ is involutive, the  following relation holds:
\stepcounter{num}
\begin{equation} \label{n2.1CC}
\ell_{\alg{R}}\circ \pi=\pi \circ \ell \circ c=\pi \circ c \circ \ell =\pi\circ \ell.
\end{equation} 
Since $\ell$ is an involution and $\pi$ is surjective, we have $\ell_{\alg{R}}\circ \ell_{\alg{R}}=\pi\circ \ell \circ \ell \circ \pi^{\vdash}=\pi\circ \pi^{\vdash}=\text{id}_{\alg{R}}$, which shows that $\ell_{\alg{R}}$ is a join-preserving involution on $(\alg{R},n)$.
To prove that $\ell_{\alg{R}}$ is anti-homomorphic, we use  the anti-homomorphic property of $\ell$ and again the surjectivity of $\pi$. Let $\beta_1,\beta_2\in \alg{R}$. Hence there exist $\alpha_1,\alpha_2\in \alg{Q}$ such that $\pi(\alpha_i)=\beta_i$ $(i=1,2)$. Then:
  \begin{align*}\ell_{\alg{R}}(\beta_1\ast \beta_2)&=(\ell_{\alg{R}}\circ \pi)(\alpha_1\ast\alpha_2)=(\pi\circ\ell)(\alpha_1\ast\alpha_2)=\pi(\ell(\alpha_2))\ast {\pi(\ell(\alpha_1))=\ell_{\alg{R}}(\beta_2)\ast \ell_{\alg{R}}(\beta_1)}.
  \end{align*}
  Finally, (\ref{n2.1CC}) shows that  $\pi$ is an involutive quantale homomorphism w.r.t.\ $\ell_{\alg{R}}$.
\end{proof}

\begin{theorem}\label{newtheorem2} Let $(\alg{Q},m)$ and $(\alg{R},n)$ be balanced quantales, and let $\alg{Q}\otimes \alg{R}\xrightarrow {\,\ell\,} \alg{Q}\otimes \alg{R}$ 
be an involution on their tensor product --- i.e.\ $(\alg{Q}\otimes \alg{R},m\otimes n, \ell)$ is an involutive quantale.
 Further, let $\alg{Q}\xrightarrow{\,\vartheta_{\alg{Q}}\,} \alg{R}$ and $\alg{R}\xrightarrow{\,\vartheta_{\alg{R}}\,} \alg{Q}$ 
  be quantale anti-homomorphisms such that 
the following diagram is commutative\textup:
\stepcounter{num}
\begin{equation} \label{neweq2.1}
\begin{tikzcd}[column sep=25pt,row sep=14 pt
]
\alg{Q}\otimes \alg{R}\arrow{r}{\ell}&\alg{Q}\otimes \alg{R}\arrow{r}{\ell}&\alg{Q}\otimes \alg{R}\\
\alg{Q}\arrow{u}{j_{\alg{Q}}}\arrow{r}{\vartheta_{\alg{Q}}}&\alg{R}\arrow{u}{j_{\alg{R}}}\arrow{r}{\vartheta_{\alg{R}}}&\alg{Q}\arrow[swap]{u}{j_{\alg{Q}}}
\end{tikzcd}
\end{equation}
\noindent If both $(\alg{S},m_{\alg{S}})\xrightarrow{\, q_{\alg{Q}}\,} (\alg{Q},m)$ and $(\alg{S},m_{\alg{S}})\xrightarrow{\, q_{\alg{R}}\,} (\alg{R},n)$ 
are strong quantale 
 homomorphisms and $\alg{Q}\otimes \alg{R}\xrightarrow{\,\pi\,} V$ is the coequalizer of $\alg{S} \xleftleftarrows[\, j_{\alg{R}}\circ q_{\alg{R}}\,]{\,j_{\alg{Q}}\circ q_{\alg{Q}}\,} \alg{Q}\otimes \alg{R}$ 
in the sense of $\cat{Quant}$, then the nucleus associated with $\pi$ is 
 involutive if and only if the coequalizers of $\alg{S} \xleftleftarrows[\, j_{\alg{R}}\circ q_{\alg{R}}\,]{\,j_{\alg{Q}}\circ q_{\alg{Q}}\,} \alg{Q}\otimes \alg{R}$ and $\alg{S} \xleftleftarrows[\, j_{\alg{R}}\circ \vartheta_{\alg{Q}}\circ q_{\alg{Q}}\,]{\,j_{\alg{Q}}\circ \vartheta_{\alg{R}}\circ q_{\alg{R}}\,} \alg{Q}\otimes \alg{R}$ are isomorphic.
\end{theorem}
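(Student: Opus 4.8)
The plan is to translate the statement into a comparison of two nuclei on $\alg{Q}\otimes\alg{R}$, and then to exploit that conjugation by the involution $\ell$ is an order-automorphism of the poset of all nuclei on $\alg{Q}\otimes\alg{R}$.

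First I would use the standard description of coequalizers in $\cat{Quant}$: the coequalizer of a parallel pair of quantale homomorphisms — or, more generally, of quantale anti-homomorphisms, viewed as homomorphisms out of the opposite quantale — into a quantale $\alg{T}$ is the canonical surjection onto the quotient of $\alg{T}$ by the \emph{least} nucleus $c$ on $\alg{T}$ identifying all pairs $(f(a),g(a))$; this $c$ coincides with the nucleus $h^{\vdash}h$ associated with the coequalizer $h$, and depends only on the family of pairs to be identified. Write $c_1$ for the nucleus associated with the given coequalizer $\pi$ and $c_2$ for the nucleus associated with the coequalizer of the second pair. Since a quantale isomorphism $\varphi$ satisfies $\varphi^{\vdash}=\varphi^{-1}$, two quotients of $\alg{Q}\otimes\alg{R}$ are isomorphic as objects under $\alg{Q}\otimes\alg{R}$ precisely when the associated nuclei agree (this is the natural reading of ``isomorphic'' when comparing two quotients of the same quantale). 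Hence the assertion to be proved reduces to \[\text{$c_1$ is involutive}\iff c_1=c_2.\]

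Next I would read off from the two commuting squares of \eqref{neweq2.1}, using $\ell\circ\ell=1_{\alg{Q}\otimes\alg{R}}$, the identities $\ell\circ j_{\alg{Q}}=j_{\alg{R}}\circ\vartheta_{\alg{Q}}$ and $\ell\circ j_{\alg{R}}=j_{\alg{Q}}\circ\vartheta_{\alg{R}}$; these show that the two legs of the second pair are exactly $\ell$ composed with the two legs of the first pair (the ordering of the legs being irrelevant for a coequalizer). For a nucleus $c$ on $\alg{Q}\otimes\alg{R}$ put $c^{\ell}:=\ell\circ c\circ\ell$. Because $\ell$ is an order-automorphism with $\ell\circ\ell=1$ and is anti-homomorphic, $c^{\ell}$ is again a nucleus (its sub-multiplicativity following from that of $c$ after passing through $\ell$ and swapping the two factors), $c\mapsto c^{\ell}$ is an order-automorphism of the poset of nuclei, and — using injectivity of $\ell$ — a nucleus $c$ identifies the family of pairs coming from the second pair if and only if $c^{\ell}$ identifies the family coming from the first. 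Since any nucleus dominating one that identifies a given family again identifies it, the nuclei identifying the first family form the up-set $\set{c\mid c_1\le c}$, so those identifying the second form $\set{c\mid c_1^{\ell}\le c}$; comparing least elements gives $c_2=c_1^{\ell}=\ell\circ c_1\circ\ell$.

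The last step would be to show that $c_1$ is involutive if and only if $\ell\circ c_1\circ\ell=c_1$. By the characterization recalled before the theorem, $c_1$ is involutive iff $\ell\circ c_1\le c_1\circ\ell$ pointwise; but from this inequality, after substituting $\ell(x)$ for the argument $x$ and then applying $\ell$ to both sides, one obtains the opposite inequality $c_1\circ\ell\le\ell\circ c_1$, whence $\ell\circ c_1=c_1\circ\ell$, that is $\ell\circ c_1\circ\ell=c_1$; the reverse implication is immediate. Combining with the previous step, $c_1$ is involutive $\iff$ $c_1=\ell\circ c_1\circ\ell=c_2$ $\iff$ the two coequalizers are isomorphic.

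The point requiring the most care is the first step: pinning down the description of coequalizers in $\cat{Quant}$ as quotients by least nuclei, checking that it is insensitive to replacing the anti-homomorphisms of the second pair by homomorphisms out of $\alg{S}^{\op}$, and fixing the sense of ``isomorphic'' for two quotients of $\alg{Q}\otimes\alg{R}$. Once this bookkeeping is in place, the remaining steps — that conjugation by $\ell$ carries $c_1$ to $c_2$, and that involutivity of a nucleus is exactly invariance under this conjugation — are short and essentially formal.
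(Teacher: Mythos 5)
Your proposal is correct, and it rests on the same two pillars as the paper's proof: the description of coequalizers in $\cat{Quant}$ as quotients by the \emph{least} nucleus equalizing the given pair of maps, and conjugation of nuclei by $\ell$. The decomposition, however, is genuinely different. The paper never isolates the identity $\overline{c}_0=\ell\circ c_0\circ\ell$ relating the two nuclei; it proves the two implications separately, each time mixing the involutivity hypothesis with the coequalizing conditions (necessity via the pair of inequalities $\overline{c}_0\le c_0$ and $c_0\le\ell\circ\overline{c}_0\circ\ell$, sufficiency via $c_0\le \ell\circ c_0\circ\ell$). You instead prove the \emph{unconditional} identity $c_2=\ell\circ c_1\circ\ell$, by noting that $c\mapsto \ell\circ c\circ\ell$ is an order-automorphism of the poset of nuclei interchanging the two up-sets of equalizing nuclei (using $\ell\circ j_{\alg{Q}}=j_{\alg{R}}\circ\vartheta_{\alg{Q}}$, $\ell\circ j_{\alg{R}}=j_{\alg{Q}}\circ\vartheta_{\alg{R}}$ and the injectivity of $\ell$), and then reduce the theorem to the easily checked fact that involutivity of a nucleus is exactly invariance under this conjugation. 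This yields a cleaner and slightly stronger intermediate statement and makes the equivalence transparent, at the cost of having to verify explicitly that the equalizing nuclei form principal up-sets and that $c^{\ell}$ is again a nucleus; the paper's inequality chase avoids that discussion. Your two bookkeeping points --- reading the second ``coequalizer'' (whose legs are anti-homomorphisms) as the quotient by the least equalizing nucleus, and reading ``isomorphic'' as isomorphism of quotient objects, i.e.\ equality of the associated nuclei --- coincide with what the paper uses implicitly, so no gap remains there.
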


\begin{proof} Let $\alg{N}$ be the set of all nuclei on $(\alg{Q}\otimes \alg{R},m\otimes n)$, and let $\alg{Q}\otimes \alg{R}\xrightarrow{\, \overline{\pi}\,} W$ be the coequalizer of $\alg{S} \xleftleftarrows[\, j_{\alg{R}}\circ \vartheta_{\alg{Q}}\circ q_{\alg{Q}}\,]{\,j_{\alg{Q}}\circ \vartheta_{\alg{R}}\circ q_{\alg{R}}\,} \alg{Q}\otimes \alg{R}$. 
First, we recall the construction of coequalizers in $\cat{Quant}$ and explicitly define the respective nuclei $c_0$ and $\overline{c}_0$ associated with $\pi$ and $\overline{\pi}$, respectively (cf.\ \cite[Thm.~2.2.6]{EGHK}):
\begin{align*}
c_0 &= \tbigwedge\set{c\in \alg{N}\mid c\circ j_{\alg{Q}}\circ q_{\alg{Q}}=c\circ j_{\alg{R}}\circ q_{\alg{R}}},\\
\overline{c}_0 &=\tbigwedge\set{c\in \alg{N}\mid c\circ j_{\alg{Q}}\circ \vartheta_{\alg{R}}\circ q_{\alg{R}}=c\circ j_{\alg{R}}\circ\vartheta_{\alg{Q}}\circ q_{\alg{Q}}}
\end{align*}
where the meet is computed pointwise. In particular, the following properties hold: 
\[c_0\circ j_{\alg{Q}}\circ q_{\alg{Q}}=c_0\circ j_{\alg{R}}\circ q_{\alg{R}} \quad \text{and}\quad \overline{c}_0\circ j_{\alg{Q}}\circ \vartheta_{\alg{R}}\circ q_{\alg{R}}=\overline{c}_0\circ j_{\alg{R}}\circ\vartheta_{\alg{Q}}\circ q_{\alg{Q}}.\]

\noindent(a) {\bf Necessity}: If $c_0$ is  involutive, then we derive the following relation from the commutativity of  diagram (\ref{neweq2.1}):
\begin{align*}
c_0\circ j_{\alg{Q}}\circ \vartheta_{\alg{R}}\circ q_{\alg{R}}&=c_0\circ \ell\circ j_{\alg{R}}\circ q_{\alg{R}}=\ell\circ c_0\circ  j_{\alg{R}}\circ q_{\alg{R}}=\ell\circ c_0\circ  j_{\alg{Q}}\circ q_{\alg{Q}}\\
&=c_0\circ  \ell\circ j_{\alg{Q}}\circ q_{\alg{Q}}=c_0\circ j_{\alg{R}}\circ \vartheta_{\alg{Q}}\circ q_{\alg{Q}}.
\end{align*}
Hence, $\overline{c}_0\le c_0$ follows. On the other hand, since
$\ell\circ \overline{c}_0\circ \ell\in\alg{N}$ and
\begin{align*}
\ell\circ \overline{c}_0\circ \ell\circ j_{\alg{Q}} \circ q_{\alg{Q}}&=\ell\circ\overline{c}_0\circ j_{\alg{R}}\circ \vartheta_{\alg{Q}}\circ q_{\alg{Q}}=\ell\circ\overline{c}_0\circ j_{\alg{Q}}\circ \vartheta_{\alg{R}}\circ q_{\alg{R}}=\ell\circ\overline{c}_0\circ\ell\circ j_{\alg{R}}\circ q_{\alg{R}},
\end{align*}
  $c_0\le \ell\circ \overline{c}_0\circ \ell$ follows. Finally, using again the property that $c_0$ is involutive, we obtain 
\[c_0=\ell\circ\ell\circ c_0=\ell\circ c_0\circ \ell\le \ell\circ\ell\circ \overline{c}_0\circ \ell\circ \ell=\overline{c}_0.\]
Hence, $c_0=\overline{c}_0$ follows --- i.e.\ the regular quotient objects w.r.t.\ $\pi$ and $\overline{\pi}$ coincide, and so the coequalizers of $\alg{S} \xleftleftarrows[\, j_{\alg{R}}\circ q_{\alg{R}}\,]{\,j_{\alg{Q}}\circ q_{\alg{Q}}\,} \alg{Q}\otimes \alg{R}$ and $\alg{S} \xleftleftarrows[\, j_{\alg{R}}\circ \vartheta_{\alg{Q}}\circ q_{\alg{Q}}\,]{\,j_{\alg{Q}}\circ \vartheta_{\alg{R}}\circ q_{\alg{R}}\,} \alg{Q}\otimes \alg{R}$ are isomorphic.\\[1mm]
(b) {\bf Sufficiency}: Assume $c_0=\overline{c}_0$. Referring again to the commutativity of  diagram (\ref{neweq2.1}), we obtain 
\[\ell\circ c_0\circ \ell\circ j_{\alg{Q}}\circ q_{\alg{Q}}=\ell\circ c_0\circ j_{\alg{R}} \circ\vartheta_{\alg{Q}}\circ q_{\alg{Q}}=\ell\circ c_0\circ j_{\alg{Q}}\circ \vartheta_{\alg{R}}\circ q_{\alg{R}}=\ell\circ c_0\circ \ell\circ j_{\alg{R}}\circ q_{\alg{R}}.\]
Since $\ell\circ c_0\circ \ell$ is a nucleus, the relation $c_0\le \ell\circ c_0\circ \ell$ follows. Hence, $c_0$ is involutive.
\end{proof}

\begin{theorem} \label{newtheorem3}Let $(\alg{Q},m)$ and $(\alg{R},n)$ be balanced quantales, and let $\alg{Q}\xrightarrow{\,\vartheta_{\alg{Q}}\,} \alg{R}$ and $\alg{R}\xrightarrow{\,\vartheta_{\alg{R}}\,} \alg{Q}$ be quantale anti-homomorphisms satisfying the following conditions\textup:
\stepcounter{num}
\begin{equation} \label{neweq2.2} \vartheta_{\alg{R}}\circ \vartheta_{\alg{Q}}=1_{\alg{Q}}\quad \text{and} \quad \vartheta_{\alg{Q}}\circ \vartheta_{\alg{R}}=1_{\alg{R}}.
\end{equation}
If $\bigl(\alg{Q}\otimes \alg{R},m\otimes n)$ is the tensor product of $(\alg{Q},m)$ and $(\alg{R},n)$, then there exists a quantale anti-homomorphism $\alg{Q}\otimes \alg{R}\xrightarrow {\,\ell\,} \alg{Q}\otimes \alg{R}$ determined by\textup:
\stepcounter{num}
\begin{equation}\label{neweq2.3}
\ell=c_{\alg{R}\alg{Q}} \circ (\vartheta_{\alg{Q}}\otimes \vartheta_{\alg{R}}).
\end{equation} 
Moreover, $\ell$ satisfies the property $\ell\circ \ell=1_{\alg{Q}\otimes \alg{R}}$ --- i.e.\ $(\alg{Q}\otimes \alg{R}, m\otimes n,\ell)$ 
is an involutive quantale, and the following diagram is commutative\textup:
\stepcounter{num}
\begin{equation} \label{neweq2.4}
\begin{tikzcd}[column sep=25pt,row sep=14 pt
]
\alg{Q}\otimes \alg{R}\arrow{r}{\ell}&\alg{Q}\otimes \alg{R}\arrow{r}{\ell}&\alg{Q}\otimes \alg{R}\\
\alg{Q}\arrow{u}{j_{\alg{Q}}}\arrow{r}{\vartheta_{\alg{Q}}}&\alg{R}\arrow{u}{j_{\alg{R}}}\arrow{r}{\vartheta_{\alg{R}}}&\alg{Q}\arrow[swap]{u}{j_{\alg{Q}}}
\end{tikzcd}
\end{equation}
 If $(\alg{Q},m)$ is left-sided, $(\alg{R},n)$ is right-sided, and both quantales are semi-unital, then the involution $\ell$ on $\alg{Q}\otimes \alg{R}$ is uniquely determined by the commutativity of the diagram, and an element $p\in \alg{Q}\otimes \alg{R}$ is a hermitian prime element of $(\alg{Q}\otimes \alg{R}, m\otimes n)$ if and only if there exist a prime element $q$ in $(\alg{Q},m)$ such that $p=(q\otimes \top)\vee (\top\otimes \vartheta_{\alg{Q}}(q))$.
\end{theorem}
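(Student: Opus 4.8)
The plan is to establish every claim by evaluating the relevant join\nobreakdash-preserving maps on the elementary tensors $\alpha\otimes\beta$, which generate $\alg{Q}\otimes\alg{R}$ under joins. A first observation I would record is that, by~(\ref{neweq2.2}), $\vartheta_{\alg{Q}}$ and $\vartheta_{\alg{R}}$ are mutually inverse bijections; being join\nobreakdash-preserving, each is then an order isomorphism, so in particular $\vartheta_{\alg{Q}}(\top)=\top$ and $\vartheta_{\alg{R}}(\top)=\top$. For the existence of $\ell$, the composite in~(\ref{neweq2.3}) is automatically a morphism of $\cat{Sup}$ with $\ell(\alpha\otimes\beta)=\vartheta_{\alg{R}}(\beta)\otimes\vartheta_{\alg{Q}}(\alpha)$; feeding this into the formula for the multiplication $\star$ of $\alg{Q}\otimes\alg{R}$ on elementary tensors, together with the anti\nobreakdash-homomorphism property of $\vartheta_{\alg{Q}}$ and $\vartheta_{\alg{R}}$, yields $\ell\bigl((\alpha_1\otimes\beta_1)\star(\alpha_2\otimes\beta_2)\bigr)=\ell(\alpha_2\otimes\beta_2)\star\ell(\alpha_1\otimes\beta_1)$, and since both sides preserve joins in each pair of variables, $\ell$ is a quantale anti\nobreakdash-homomorphism. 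Applying $\ell$ twice to $\alpha\otimes\beta$ and using~(\ref{neweq2.2}) returns $\alpha\otimes\beta$, so $\ell\circ\ell=1_{\alg{Q}\otimes\alg{R}}$; finally $\ell(j_{\alg{Q}}(\alpha))=\vartheta_{\alg{R}}(\top)\otimes\vartheta_{\alg{Q}}(\alpha)=\top\otimes\vartheta_{\alg{Q}}(\alpha)=j_{\alg{R}}(\vartheta_{\alg{Q}}(\alpha))$, with the symmetric identity $\ell(j_{\alg{R}}(\beta))=j_{\alg{Q}}(\vartheta_{\alg{R}}(\beta))$, so~(\ref{neweq2.4}) commutes.

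For uniqueness under the added hypotheses, I would first note that semi\nobreakdash-unitality turns the left\nobreakdash-sidedness of $\alg{Q}$ and the right\nobreakdash-sidedness of $\alg{R}$ into the equalities $\top\ast\alpha=\alpha$ ($\alpha\in\alg{Q}$) and $\beta\ast\top=\beta$ ($\beta\in\alg{R}$), whence on elementary tensors
\[\alpha\otimes\beta=(\top\ast\alpha)\otimes(\beta\ast\top)=j_{\alg{R}}(\beta)\star j_{\alg{Q}}(\alpha).\]
Then, given any involution $\ell'$ on $\alg{Q}\otimes\alg{R}$ making~(\ref{neweq2.4}) commute, its anti\nobreakdash-homomorphism property and the two commuting squares force
\[\ell'(\alpha\otimes\beta)=\ell'(j_{\alg{Q}}(\alpha))\star\ell'(j_{\alg{R}}(\beta))=j_{\alg{R}}(\vartheta_{\alg{Q}}(\alpha))\star j_{\alg{Q}}(\vartheta_{\alg{R}}(\beta))=(\top\ast\vartheta_{\alg{R}}(\beta))\otimes(\vartheta_{\alg{Q}}(\alpha)\ast\top),\]
and since $\vartheta_{\alg{R}}(\beta)\in\alg{Q}$ and $\vartheta_{\alg{Q}}(\alpha)\in\alg{R}$ the same equalities collapse the right\nobreakdash-hand side to $\vartheta_{\alg{R}}(\beta)\otimes\vartheta_{\alg{Q}}(\alpha)=\ell(\alpha\otimes\beta)$, so $\ell'=\ell$ by join\nobreakdash-preservation.

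For the description of the hermitian prime elements I would use Theorem~\ref{newtheorem1} after establishing the auxiliary fact that $\vartheta_{\alg{Q}}$ sends prime elements of $\alg{Q}$ to prime elements of $\alg{R}$, and $\vartheta_{\alg{R}}$ does the reverse: from $\beta_1\ast\beta_2\le\vartheta_{\alg{Q}}(q)$ one gets, applying $\vartheta_{\alg{R}}$, that $\vartheta_{\alg{R}}(\beta_2)\ast\vartheta_{\alg{R}}(\beta_1)\le q$, and then primeness of $q$ together with one more application of $\vartheta_{\alg{Q}}$ and $\vartheta_{\alg{Q}}(\top)=\top$ gives $\top\ast\beta_2\le\vartheta_{\alg{Q}}(q)$ or $\beta_1\ast\top\le\vartheta_{\alg{Q}}(q)$, while $\vartheta_{\alg{Q}}(q)\neq\top$ because $\vartheta_{\alg{Q}}$ is injective and sends $\top$ to $\top$. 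With this in hand: if $q$ is prime in $\alg{Q}$ then $p:=(q\otimes\top)\vee(\top\otimes\vartheta_{\alg{Q}}(q))$ is prime by Theorem~\ref{newtheorem1}, and computing $\ell(p)$ with the help of $\vartheta_{\alg{R}}\circ\vartheta_{\alg{Q}}=1_{\alg{Q}}$ and $\vartheta_{\alg{R}}(\top)=\vartheta_{\alg{Q}}(\top)=\top$ gives $\ell(p)=(\top\otimes\vartheta_{\alg{Q}}(q))\vee(q\otimes\top)=p$, so $p$ is hermitian prime. Conversely, if $p$ is hermitian prime, write $p=(q\otimes\top)\vee(\top\otimes r)$ with $q,r$ the prime elements uniquely attached to $p$ by Theorem~\ref{newtheorem1}; then $\ell(p)=(\vartheta_{\alg{R}}(r)\otimes\top)\vee(\top\otimes\vartheta_{\alg{Q}}(q))$ is again a decomposition of the same kind, with $\vartheta_{\alg{R}}(r)$ prime in $\alg{Q}$ and $\vartheta_{\alg{Q}}(q)$ prime in $\alg{R}$, so from $\ell(p)=p$ and the uniqueness clause of Theorem~\ref{newtheorem1} one obtains $r=\vartheta_{\alg{Q}}(q)$, which is the asserted form.

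I expect the only genuinely delicate points to be bookkeeping ones. The first is getting the composition order right in $\alpha\otimes\beta=j_{\alg{R}}(\beta)\star j_{\alg{Q}}(\alpha)$ --- it is $j_{\alg{R}}$ that is applied first, which is precisely where the hypotheses that $\alg{Q}$ is left\nobreakdash-sided and $\alg{R}$ right\nobreakdash-sided enter, and what makes the uniqueness argument close. The second, in the hermitian\nobreakdash-prime statement, is combining the fact that $\vartheta_{\alg{Q}}$ preserves prime elements with the \emph{uniqueness} part of Theorem~\ref{newtheorem1}, which is what turns the equation $\ell(p)=p$ into the single identity $r=\vartheta_{\alg{Q}}(q)$.
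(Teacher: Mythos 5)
Your proposal is correct and follows essentially the same route as the paper: verification of the anti-homomorphism and involution properties of $\ell$ on elementary tensors, commutativity of the diagram via $\vartheta_{\alg{Q}}(\top)=\vartheta_{\alg{R}}(\top)=\top$, uniqueness from the identity $\alpha\otimes\beta=j_{\alg{R}}(\beta)\star j_{\alg{Q}}(\alpha)$ forced by semi-unitality plus sidedness, and the hermitian-prime description via Theorem~\ref{newtheorem1}. Your only addition is the explicit auxiliary check that $\vartheta_{\alg{Q}}$ and $\vartheta_{\alg{R}}$ exchange prime elements so that the uniqueness clause of Theorem~\ref{newtheorem1} applies; the paper instead relies on the elementary injectivity of $(q,r)\longmapsto(q\otimes\top)\vee(\top\otimes r)$, but both close the argument.
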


\begin{proof} To show that $\ell$ defined by (\ref{neweq2.3}) is an involution on $(\alg{Q}\otimes \alg{R}, m\otimes n)$, we recall that every tensor of $\alg{Q}\otimes \alg{R}$ is an appropriate join of elementary tensors. 
Hence, it is sufficient to verify the following properties:
\begin{enumerate}[label=\textup{(\roman*)},leftmargin=20pt,labelwidth=10pt,itemindent=0pt,labelsep=5pt,topsep=5pt,itemsep=3pt]
\item $\ell(\ell(\alpha\otimes \beta))=\alpha\otimes \beta$ for each $\alpha\in \alg{Q}$ and $\beta\in \alg{R}.$
\item $\ell((\alpha_1\otimes \beta_1)\star (\alpha_2\otimes \beta_2))=\ell(\alpha_2\otimes \beta_2)\star \ell(\alpha_1\otimes \beta_1)$ for each $\alpha_1,\alpha_2\in \alg{Q}$ and $\beta_1,\beta_2\in \alg{R}.$
\end{enumerate}
For property (i), we have:
\[\ell(\ell(\alpha\otimes \beta))= \ell\bigl(\vartheta_{\alg{R}}(\beta)\otimes \vartheta_{\alg{Q}}(\alpha)\bigr)=\vartheta_{\alg{R}}(\vartheta_{\alg{Q}}(\alpha))\otimes \vartheta_{\alg{Q}}(\vartheta_{\alg{R}}(\beta))=\alpha\otimes \beta.\]
Since $\vartheta_{\alg{Q}}$ and $\vartheta_{\alg{R}}$ are quantale anti-homomorphisms, we obtain:
\begin{align*}
\ell((\alpha_1\otimes\beta_1)\star (\alpha_2\otimes \beta_2))&=\ell((\alpha_1\ast\alpha_2)\otimes (\beta_1\ast \beta_2))=\vartheta_{\alg{R}}(\beta_1\ast \beta_2)\otimes \vartheta_{\alg{Q}}(\alpha_1\ast \alpha_2)\\
&=(\vartheta_{\alg{R}}(\beta_2)\ast \vartheta_{\alg{R}}(\beta_1))\otimes (\vartheta_{\alg{Q}}(\alpha_2)\ast \vartheta_{\alg{Q}}(\alpha_1))\\
&= (\vartheta_{\alg{R}}(\beta_2)\otimes \vartheta_{\alg{Q}}(\alpha_2))\star (\vartheta_{\alg{R}}(\beta_1)\otimes \vartheta_{\alg{Q}}(\alpha_1))= \ell(\alpha_2\otimes \beta_2)\star \ell(\alpha_1\otimes \beta_1).
\end{align*}
Hence (ii) is verified.\\[1mm]
Since (\ref{neweq2.2}) implies $\top=\vartheta_{\alg{Q}}(\vartheta_{\alg{R}}(\top))\le \vartheta_{\alg{Q}}(\top)$ and $\top=\vartheta_{\alg{R}}(\vartheta_{\alg{Q}}(\top))\le \vartheta_{\alg{R}}(\top)$ ---
i.e.\  $\vartheta_{\alg{Q}}$ and $\vartheta_{\alg{R}}$ preserve the respective universal upper bounds, we obtain for all 
$\alpha\in \alg{Q}$ 
\[\ell(j_{\alg{Q}}(\alpha))=\ell(\alpha\otimes \top)= \top\otimes \vartheta_{\alg{Q}}(\alpha)= (j_{\alg{R}}\circ \vartheta_{\alg{Q}})(\alpha),\]
Hence, $\ell\circ j_{\alg{Q}}=j_{\alg{R}}\circ\vartheta_{\alg{Q}}$ follows.
Analogously, we verify $\ell\circ j_{\alg{R}}=j_{\alg{Q}}\circ\vartheta_{\alg{R}}$, and so the diagram (\ref{neweq2.4}) is commutative.\\[1mm]
Finally, if $(\alg{Q},m)$ is left-sided and semi-unital, it follows that $\top$ is a left-unit in $\alg{Q}$; respectively, if $(\alg{R},n)$ is right-sided and semi-unital, then $\top$ is a right-unit in $\alg{R}$.  
Consequently the commutativity of the diagram (\ref{neweq2.4}) implies:
\begin{align*}\ell(\alpha\otimes \beta)&= \ell((\top\ast \alpha)\otimes (\beta\ast \top))=\ell(j_{\alg{R}}(\beta)\star j_{\alg{Q}}(\alpha))= \ell(j_{\alg{Q}}(\alpha))\star \ell(j_{\alg{R}}(\beta))\\
&=j_{\alg{R}}(\vartheta_{\alg{Q}}(\alpha))\star 
j_{\alg{Q}}(\vartheta_{\alg{R}}(\beta))=(\top\otimes \vartheta_{\alg{Q}}(\alpha))\star (\vartheta_{\alg{R}}(\beta)\otimes \top)=\vartheta_{\alg{R}}(\beta)\otimes \vartheta_{\alg{Q}}(\alpha)
\end{align*}
for each $\alpha\in \alg{Q}$ and $\beta\in \alg{R}$.
Again, since every tensor of $\alg{Q}\otimes \alg{R}$ is a join of elementary tensors, the formula (\ref{neweq2.3}) follows. 
Finally, it follows from Theorem~\ref{newtheorem1} that $p\in \sigma(\alg{Q}\otimes \alg{R})$ if and only if there exist $q\in\sigma(\alg{Q})$ and  $r\in\sigma(\alg{R})$ such that $p=(q\otimes \top)\vee (\top\otimes r)$. Since
\[\ell(p)=\ell(q\otimes \top)\vee \ell(\top\otimes r)=(\top\otimes \vartheta_{\alg{Q}}(q))\vee(\vartheta_{\alg{R}}(r)\otimes \top),\]
it follows that $p$ is hermitian if and only if $\vartheta_{\alg{Q}}(q)=r$ and $\vartheta_{\alg{R}}(r)=q$.
\end{proof}

\begin{remark*} 
Let $(\alg{Q},m)$ be a balanced quantale and $(\alg{Q},m^{op})$ be the opposite quantale. 
Then the identity $(\alg{Q},m)\xrightarrow{\,1_{\alg{Q}}\,} (\alg{Q},m^{op})$ is a quantale 
anti-homomorphism 
such that condition (\ref{neweq2.2}) is satisfied. 
We conclude from Theorem~\ref{newtheorem3} that $(\alg{Q}\otimes \alg{Q}, m\otimes  m^{op}, c_{\alg{Q}\alg{Q}})$ is an involutive quantale. 
This observation is a generalization of \cite[Cor.~2.13]{GHK22}. Since $(\alg{Q},m)$ is 
 balanced, the canonical embedding $\alg{Q}\xrightarrow{\, j_{\alg{Q}}\,} \alg{Q}\otimes \alg{Q}$ is a strong quantale homomorphism. 
 Hence, every balanced quantale can be embedded
  into an involutive quantale in the sense of \cat{Quant}.\\
 If $\iota$ is an idempotent element of $\alg{Q}$ with $\iota\not\in\set{\bot,\top}$, then $\alpha \longmapsto \alpha\otimes \iota$\ is still an embedding from $(\alg{Q},m)$ into  $\bigl(\alg{Q}\otimes \alg{Q}, m\otimes m^{op}, c_{\alg{Q}\alg{Q}}\bigr)$, but not a strong quantale homomorphism.
\end{remark*}

\begin{corollary} \label{newcorollary2} Let $\alg{Q}$ be an involutive and semi-unital quantale. Further, let $(\mathds{L}(\alg{Q}),m_l)$ and $(\mathds{R}(\alg{Q}),m_r)$ be the subquantales of all left-sided  elements and right-sided elements of $\alg{Q}$, respectively.
 Then there exists a unique involution $\ell$ on the tensor product $(\mathds{L}(\alg{Q})\otimes \mathds{R}(\alg{Q}),m_l\otimes m_r)$ satisfying the following property\textup:
\stepcounter{num}
\begin{equation}\label{neweq2.5}  
\ell(\alpha \otimes \beta)=\beta^{\prime} \otimes \alpha^{\prime}, \qquad \alpha \in \mathds{L}(\alg{Q}),\, \beta \in \mathds{R}(\alg{Q}).
\end{equation}
\end{corollary}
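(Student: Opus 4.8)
The plan is to obtain this as a direct application of Theorem~\ref{newtheorem3}, taking $\mathds{L}(\alg{Q})$ as the left-sided factor, $\mathds{R}(\alg{Q})$ as the right-sided factor, and the relevant restrictions of the involution $\alpha\longmapsto\alpha^{\prime}$ of $\alg{Q}$ as the two anti-homomorphisms.

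First I would check that the hypotheses of Theorem~\ref{newtheorem3} are met. Since $\alg{Q}$ is semi-unital it is balanced, so $\top\ast\top=\top$; hence $\top$ is both left-sided and right-sided, i.e.\ it is the universal upper bound of each of the subquantales $\mathds{L}(\alg{Q})$ and $\mathds{R}(\alg{Q})$, and both of them are therefore balanced. By construction $\mathds{L}(\alg{Q})$ is left-sided and $\mathds{R}(\alg{Q})$ is right-sided, and since $\alpha\le\top\ast\alpha$ and $\alpha\le\alpha\ast\top$ hold for every $\alpha\in\alg{Q}$, both subquantales are semi-unital as well. Moreover, the involution of $\alg{Q}$ is a join-preserving bijection, hence an order isomorphism, so $\top^{\prime}=\top$; consequently $\top\ast\alpha\le\alpha$ implies $\alpha^{\prime}\ast\top=(\top\ast\alpha)^{\prime}\le\alpha^{\prime}$, so that $\alpha\in\mathds{L}(\alg{Q})$ forces $\alpha^{\prime}\in\mathds{R}(\alg{Q})$, and symmetrically $\alpha\in\mathds{R}(\alg{Q})$ forces $\alpha^{\prime}\in\mathds{L}(\alg{Q})$. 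Thus the involution restricts to join-preserving quantale anti-homomorphisms $\vartheta_{\mathds{L}}\colon\mathds{L}(\alg{Q})\to\mathds{R}(\alg{Q})$ and $\vartheta_{\mathds{R}}\colon\mathds{R}(\alg{Q})\to\mathds{L}(\alg{Q})$, both acting by $\gamma\longmapsto\gamma^{\prime}$, and $(\gamma^{\prime})^{\prime}=\gamma$ gives $\vartheta_{\mathds{R}}\circ\vartheta_{\mathds{L}}=1_{\mathds{L}(\alg{Q})}$ and $\vartheta_{\mathds{L}}\circ\vartheta_{\mathds{R}}=1_{\mathds{R}(\alg{Q})}$, which is condition~(\ref{neweq2.2}).

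Now I would invoke Theorem~\ref{newtheorem3} for the balanced, semi-unital quantales $\mathds{L}(\alg{Q})$ (left-sided) and $\mathds{R}(\alg{Q})$ (right-sided) with $\vartheta_{\mathds{L}}$ and $\vartheta_{\mathds{R}}$: it produces an involution $\ell=c_{\mathds{R}(\alg{Q})\mathds{L}(\alg{Q})}\circ(\vartheta_{\mathds{L}}\otimes\vartheta_{\mathds{R}})$ on $(\mathds{L}(\alg{Q})\otimes\mathds{R}(\alg{Q}),m_l\otimes m_r)$, whose value on an elementary tensor is $\ell(\alpha\otimes\beta)=\vartheta_{\mathds{R}}(\beta)\otimes\vartheta_{\mathds{L}}(\alpha)=\beta^{\prime}\otimes\alpha^{\prime}$ for $\alpha\in\mathds{L}(\alg{Q})$ and $\beta\in\mathds{R}(\alg{Q})$; this is precisely~(\ref{neweq2.5}). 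Uniqueness is then immediate: any involution on the tensor product is join-preserving and every element of $\mathds{L}(\alg{Q})\otimes\mathds{R}(\alg{Q})$ is a join of elementary tensors, so an involution is completely determined by its values on elementary tensors, and (\ref{neweq2.5}) prescribes exactly those values.

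I do not expect any genuine obstacle; the whole argument is a bookkeeping reduction to Theorem~\ref{newtheorem3}. The only delicate points are the (routine) verifications that $\top$ is the top element of each of the two subquantales --- so that balancedness and semi-unitality are inherited --- and that the involution of $\alg{Q}$ interchanges $\mathds{L}(\alg{Q})$ and $\mathds{R}(\alg{Q})$, producing the mutually inverse pair of anti-homomorphisms demanded by the hypotheses.
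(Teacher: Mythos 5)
Your proposal is correct and follows essentially the same route as the paper, which likewise obtains the corollary by restricting the involution of $\alg{Q}$ to a mutually inverse pair of anti-homomorphisms between $\mathds{L}(\alg{Q})$ and $\mathds{R}(\alg{Q})$ satisfying (\ref{neweq2.2}) and then invoking Theorem~\ref{newtheorem3}; your version merely spells out the routine verifications (balancedness, semi-unitality, the exchange of $\mathds{L}(\alg{Q})$ and $\mathds{R}(\alg{Q})$ under $(-)^{\prime}$) that the paper leaves implicit.
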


\begin{proof} The restriction of the involution in $\alg{Q}$ 
induces a pair of anti-homomorphisms between $\mathds{L}(\alg{Q})$ and $\mathds{R}(\alg{Q})$ satisfying (\ref{neweq2.2}). Since $\mathds{L}(\alg{Q})$ and $\mathds{R}(\alg{Q})$ are semi-unital, we conclude from Theorem~\ref{newtheorem3} that there exists a unique involution $\ell$ on the tensor product $\mathds{L}(\alg{Q})\otimes \mathds{R}(\alg{Q})$ satisfying (\ref{neweq2.5}).  
\end{proof}

\subsection{Tensorially involutive quantale associated with an involutive and semi-unital quantale}
\label{subsec:2.1}
The previous Corollary~\ref{newcorollary2} suggests to introduce the following terminology. Let $\alg{Q}$ be an involutive and semi-unital quantale. Then the involutive quantale $(\mathds{L}(\alg{Q})\otimes \mathds{R}(\alg{Q}),m_l\otimes m_r,\ell)$ satisfying (\ref{neweq2.5}) is called the \emph{tensorially involutive} quantale associated with $\alg{Q}$. 
\begin{remark}\label{newremark1}
The tensorially involutive quantale $(\mathds{L}(\alg{Q})\otimes \mathds{R}(\alg{Q}),m_l\otimes m_r,\ell)$ of $\alg{Q}$ is always semi-unital and semi-integral. Moreover, it is pre-idempotent if and only if $\mathds{L}(\alg{Q})$ is idempotent and it is bisymmetric if and only if $\mathds{L}(\alg{Q})$ is bisymmetric.
\end{remark}

The next result describes the hermitian spectrum of $(\mathds{L}(\alg{Q})\otimes \mathds{R}(\alg{Q}),m_l\otimes m_r,\ell)$ in terms of the spectrum of $\mathds{L}(\alg{Q})$.
\begin{corollary}\label{newcorollary3} Let $\alg{Q}$ be an involutive and semi-unital quantale, and let $(\mathds{L}(\alg{Q})\otimes \mathds{R}(\alg{Q}),m_l\otimes m_r,\ell)$ be its tensorially involutive quantale. Then there exists a bijective map 
\[{\sigma(\mathds{L}(\alg{Q}))\xrightarrow{\,\psi_l\,}\sigma_h(\mathds{L}(\alg{Q})\otimes \mathds{R}(\alg{Q}))}\]
 given by
 $\psi_l(q)=(q\otimes \top)\vee (\top\otimes q^{\prime})$ for each $q\in \sigma(\mathds{L}(\alg{Q}))$.
 \end{corollary}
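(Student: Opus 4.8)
The plan is to obtain $\psi_l$ as the restriction of the bijection between $\sigma(\mathds{L}(\alg{Q}))$ and $\sigma(\mathds{L}(\alg{Q})\otimes \mathds{R}(\alg{Q}))$ supplied by Theorem~\ref{newtheorem1}, and then to cut it down to the hermitian part using the last clause of Theorem~\ref{newtheorem3}. First I would note that $\mathds{L}(\alg{Q})$ is left-sided, $\mathds{R}(\alg{Q})$ is right-sided, and both are semi-unital (the latter because $\alg{Q}$ is semi-unital and sidedness is preserved under the relevant suprema); this is exactly the hypothesis needed to invoke Theorem~\ref{newtheorem1}. Hence $p\in\sigma(\mathds{L}(\alg{Q})\otimes \mathds{R}(\alg{Q}))$ iff $p=(q\otimes\top)\vee(\top\otimes r)$ for uniquely determined $q\in\sigma(\mathds{L}(\alg{Q}))$ and $r\in\sigma(\mathds{R}(\alg{Q}))$.

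Next I would identify the involution $\ell$ with the one from Corollary~\ref{newcorollary2}, so that $\ell(\alpha\otimes\beta)=\beta'\otimes\alpha'$ on elementary tensors, where $(-)'$ denotes the involution of $\alg{Q}$ restricted to $\mathds{L}(\alg{Q})$ and $\mathds{R}(\alg{Q})$ (these restrictions are exactly the anti-homomorphisms $\vartheta_{\mathds{L}(\alg{Q})}$, $\vartheta_{\mathds{R}(\alg{Q})}$ interchanging the two subquantales). Then, applying the computation already carried out at the end of the proof of Theorem~\ref{newtheorem3}, for $p=(q\otimes\top)\vee(\top\otimes r)$ we get
\[\ell(p)=\ell(q\otimes\top)\vee\ell(\top\otimes r)=(\top\otimes q')\vee(r'\otimes\top).\]
So $p$ is hermitian iff $(\top\otimes q')\vee(r'\otimes\top)=(q\otimes\top)\vee(\top\otimes r)$. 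Using the uniqueness clause of Theorem~\ref{newtheorem1} applied to this prime element — noting that $r'\in\mathds{L}(\alg{Q})$ is prime since $\vartheta_{\mathds{R}(\alg{Q})}$ is an order isomorphism onto $\mathds{L}(\alg{Q})$ carrying primes to primes, and $q'\in\mathds{R}(\alg{Q})$ is prime likewise — this forces $r'=q$ and $q'=r$; conversely, either of these equations (each implying the other since $(-)'$ is involutive) gives hermitianness. Therefore the hermitian primes of $\mathds{L}(\alg{Q})\otimes\mathds{R}(\alg{Q})$ are precisely the elements $(q\otimes\top)\vee(\top\otimes q')$ with $q\in\sigma(\mathds{L}(\alg{Q}))$, which says exactly that $\psi_l$ maps $\sigma(\mathds{L}(\alg{Q}))$ onto $\sigma_h(\mathds{L}(\alg{Q})\otimes\mathds{R}(\alg{Q}))$.

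Finally I would check $\psi_l$ is injective: if $(q_1\otimes\top)\vee(\top\otimes q_1')=(q_2\otimes\top)\vee(\top\otimes q_2')$, the uniqueness part of Theorem~\ref{newtheorem1} yields $q_1=q_2$. I do not expect a genuine obstacle here; the one point requiring a line of care is the claim that the order isomorphism $\mathds{R}(\alg{Q})\xrightarrow{\,(-)'\,}\mathds{L}(\alg{Q})$ preserves primeness — this follows because primeness of $r$ in a semi-integral quantale is characterized by a condition on products and $\top$, and an anti-isomorphism of semi-unital (hence balanced) quantales that is an involution on the ambient $\alg{Q}$ transports $\top$ to $\top$ and reverses products, so it carries the defining implication for primes in $\mathds{R}(\alg{Q})$ to the one for primes in $\mathds{L}(\alg{Q})$. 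Everything else is bookkeeping with the already-established Theorems~\ref{newtheorem1} and~\ref{newtheorem3} and Corollary~\ref{newcorollary2}.
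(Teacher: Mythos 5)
Your proposal is correct and follows essentially the same route as the paper: the paper's proof of this corollary simply cites the final clause of Theorem~\ref{newtheorem3} for surjectivity (the proof of that clause being exactly your computation of $\ell(p)$ combined with the uniqueness part of Theorem~\ref{newtheorem1}) and obtains injectivity from the equivalence $(\alpha_1\otimes \top)\vee(\top\otimes \beta_1)=(\alpha_2\otimes \top)\vee(\top\otimes \beta_2)\iff(\alpha_1=\alpha_2\text{ and }\beta_1=\beta_2)$. Your added check that the involution carries primes of $\mathds{R}(\alg{Q})$ to primes of $\mathds{L}(\alg{Q})$, so that the uniqueness clause of Theorem~\ref{newtheorem1} is legitimately applicable to both decompositions, is a detail the paper leaves implicit.
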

 
 \begin{proof} 
 Since every hermitian prime element $p$ of  $(\mathds{L}(\alg{Q})\otimes \mathds{R}(\alg{Q}),m_l\otimes m_r,\ell)$ has the form $p=(q\otimes \top)\vee (\top\otimes q^{\prime})$ with $q\in \sigma(\mathds{L}(\alg{Q}))$ (cf.\ Theorem~\ref{newtheorem3}), the map $\psi_l$ is surjective. The injectivity of this map follows from the subsequent equivalence for all $\alpha_1,\alpha_2\in \mathds{L}(\alg{Q})$ and $\beta_1,\beta_2\in \mathds{R}(\alg{Q})$:
 \[(\alpha_1\otimes \top) \vee (\top\otimes \beta_1)=(\alpha_2\otimes \top) \vee (\top\otimes \beta_2) \iff (\alpha_1=\alpha_2\text{ and }\beta_1=\beta_2). \qedhere\]
 \end{proof}
 
\subsection{The quantization of $\boldsymbol{2}$}
\label{subsec:2.2}
Let $\alg{Q}$ be an involutive and semi-unital quantale. Obviously, if $\mathds{L}(\alg{Q})=\mathds{R}(\alg{Q})=\mathds{I}(\alg{Q})=\set{\bot,\top}$, then the tensorially involutive quantale associated with $\alg{Q}$ is the Boolean \donotbreakdash{$2$}chain $\boldsymbol{2}$. 
We now present the simplest example of a tensorially involutive quantale associated with a quantale $\alg{Q}$ that has some left-sided element being different from $\bot$ and $\top$. 

First, we recall  that on the \donotbreakdash{$3$-}chain $C_3=\set{\bot,a,\top}$ with $\bot < a < \top$, 
there exist exactly two semi-unital and non-commutative quantales, namely $C_3^{\ell}=(C_3,m_{\ell})$ 
and $C_3^r=(C_3,m_r)$, where $a\in C_3^{\ell}$ is left-sided, but not right-sided and $a\in C_3^r$ is right-sided, 
but not left-sided. In particular, the multiplication tables of the quantale multiplications $\ast_{\ell}$ in $C_3^{\ell}$ and $\ast_r$ in $C_3^r$ are given by: 
 \begin{center}
{\footnotesize\begin{tabular}{c||c|c}
$\ast_{\ell}$ & $a$ & $\top$\\
\hline\hline
$a$ &  $a$ & $\top$ \\\hline
$\top$ & $a$ &  $\top$ \\
\end{tabular}
\qquad\qquad\qquad
\begin{tabular}{c||c|c}
$\ast_r$ & $a$ &  $\top$\\
\hline\hline
$a$ &  $a$  & $a$ \\\hline
$\top$ & $\top$  &$\top$ \\
\end{tabular}}
\end{center}

Now we consider  an involutive and semi-unital quantale $\alg{Q}$ such that $\mathds{I}(\alg{Q})=\set{\bot,\top}$ 
and the subquantales $\mathds{L}(\alg{Q})$ and $\mathds{R}(\alg{Q})$ are isomorphic to $C_3^{\ell}$ and $C_3^r$.
Then the tensorially involutive quantale associated with $\alg{Q}$ is isomorphic to the tensor product $C_3^{\ell}\otimes C_3^r$. 
Since  $C_3^{\ell}$ and $ C_3^r$ are idempotent and therefore bisymmetric, the quantale $C_3^{\ell}\otimes C_3^r$ is involutive, semi-unital, semi-integral, pre-idempotent and bisymmetric (cf. Remark~\ref{newremark1}).
To simplify the notation we will denote $C_3^{\ell}\otimes C_3^r$ by $\alg{Q}_{\boldsymbol{2}}$, and call it the \emph{quantization of $\boldsymbol{2}$}.

 Since $\alg{Q}_{\boldsymbol{2}}$ consists of six elements, we introduce the following abbreviations:
\[a_{\ell}:= a\otimes \top,\quad a_r:= \top\otimes a,\quad b:=a\otimes a,\quad c:= a_{\ell}\vee a_r.\]    
 Then the action of the involution on $\alg{Q}_{\boldsymbol{2}}$ is given by $\top^{\prime}=\top$, $c^{\prime}=c$, $a_{\ell}^{\prime}=a_r$, $a_r^{\prime}=a_{\ell}$, $b^{\prime}=b$, and $\bot^{\prime}=\bot$, and the Hasse diagram  and the multiplication table of $\alg{Q}_{\boldsymbol{2}}$ have the form: 
 \begin{center}
\renewcommand\arraystretch{1.3}
{\footnotesize$\begin{tikzcd}[column sep=5pt,row sep=5pt]
&\top \arrow[-]{d} &\\
&c\arrow[-]{dl}\arrow[-]{dr}\\
{a_{\ell}}&&{a_r}\\
&b\arrow[-]{ul}\arrow[-]{ur}&\\
&\bot \arrow[-]{u} 
\end{tikzcd}$}
\qquad\qquad
{\footnotesize
\begin{tabular}{c||c|c|c|c|c}
$\star$  & $b$ & $a_{\ell}$ & $a_r$ & $c$ & $\top$\\
\hline\hline
$b$ &  $b$ & $b$ & $a_r$ & $a_r$ & $a_r$\\\hline
$a_{\ell}$ & $a_{\ell}$ & $a_{\ell}$ & $\top$ & $\top$ &$\top$\\\hline
$a_r$ & $b$ & $b$ & $a_r$ &$a_r$ & $a_r$\\\hline
$c$ &$a_{\ell}$ & $a_{\ell}$ & $\top$ & $\top$ & $\top$\\\hline
$\top$ &  $a_{\ell}$ & $a_{\ell}$ & $\top$ & $\top$ & $\top$\\
\end{tabular}}
\end{center} 

It is interesting to see that the right quantale multiplication in $\alg{Q}_{\boldsymbol{2}}$ satisfies the following properties:
\stepcounter{num}
\begin{equation}\label{neweq2.6} \underline{\mbox{\,\,\,\,}}\star b=\underline{\mbox{\,\,\,\,}}\star a_{\ell}\quad \text{and}\quad\underline{\mbox{\,\,\,\,}}\star a_r=\underline{\mbox{\,\,\,\,}}\star c=\underline{\mbox{\,\,\,\,}}\star \top.
\end{equation}
Since $\alg{Q}_{\boldsymbol{2}}$ is involutive, we also have the respective properties for the left quantale multiplication:
\[b\star \underline{\mbox{\,\,\,\,}}=a_r\star\underline{\mbox{\,\,\,\,}}\quad \text{and}\quad a_{\ell}\star\underline{\mbox{\,\,\,\,}}=c\star\underline{\mbox{\,\,\,\,}}=\top\star\underline{\mbox{\,\,\,\,}}.\]

Referring to Lemma~\ref{newlemma1}, the quantale multiplication in $\alg{Q}_{\boldsymbol{2}}$ satisfies the subsequent
important property for all $\alpha,\beta \in\alg{Q}_{\boldsymbol{2}}$:
\stepcounter{num}
\begin{equation}\label{neweq2.7}  {\alpha\star \gamma\star \beta=\alpha\star \beta\qquad\gamma \in\alg{Q}_{\boldsymbol{2}}\setminus\{ \bot\}}.
\end{equation}
Finally, the element $c=a_{\ell}\vee a_r$ is the largest prime element of $\alg{Q}_{\boldsymbol{2}}$. Obviously, $c$ is also a hermitian prime element.

The importance of the quantization of $\boldsymbol{2}$ is explained in the next remark.

\begin{remark}\label{newremark3}
Given a semi-unital quantale $\alg{Q}$, every strong quantale homomorphism $\alg{Q}\xrightarrow{\,h\,} \alg{Q}_{\boldsymbol{2}}$ can be identified with a prime element $p$ of $\alg{Q}$ and vice versa (cf.\ \cite{Ho15B} and \cite[{Thm.~2.5.15}]{EGHK}). This bijection can be expressed as follows.
 If $h^{\vdash}$ is the right adjoint map of $h$, then $p=h^{\vdash}(c)$, where $c$ is the largest prime element of $\alg{Q}_{\boldsymbol{2}}$ (cf.\ \cite[Lem.~2.5.14]{EGHK}). 
 On the other hand, if $\alg{Q}$ is a semi-unital quantale and $p$ a prime element of $\alg{Q}$, 
 then $p$ induces a quantale homomorphism $\alg{Q}\xrightarrow{\,h_p\,} \alg{Q}_{\boldsymbol{2}}$ by:
 \stepcounter{num}
\begin{equation}\label{neweq2.8}
h_p(\alpha)=\begin{cases} 
\bot,&\top\ast \alpha\ast \top\le p,\\
b,& \top\ast \alpha\ast \top\not\le p,\ \alpha\ast \top\le p\text{ and } \top \ast \alpha\le p,\\
a_{\ell},& \alpha\ast \top\not\le p\text{ and } \top \ast \alpha\le p,\\
a_r,& \alpha\ast \top\le p\text{ and } \top \ast \alpha\not\le p,\\
c, & \alpha\le p,\ \alpha\ast \top\not\le p\text{ and } \top \ast \alpha\not\le p,\\
\top, & \alpha\not\le p,
\end{cases} 
\qquad \alpha\in \alg{Q}.
\end{equation}
Obviously, $h_p^{\vdash}(c)=p$. If $\alg{Q}$ is also involutive, then $h_p$ is involutive if and only if $p$ is hermitian.
\end{remark}

 This construction has a generalization to \emph{arbitrary} quantales (cf.\ \cite[Rem.~4.1]{Ho15B}) since every 
strong quantale homomorphism $\alg{Q}\xrightarrow{\,h\,} \alg{Q}_{\boldsymbol{2}}$ can be identified with the strong quantale homomorphism $\widehat{\alg{Q}}\xrightarrow{\,\widehat{h}\,} \alg{Q}_{\boldsymbol{2}}$ given by $\widehat{h}(\widehat{\top})=\top$ and $\widehat{h}(\alpha)=h(\alpha)$ for all $\alpha\in\alg{Q}$ (cf. Remarks~\ref{newremarks1}\,(2)). 

 For the convenience of the reader we recall the construction here and summarize the situation
as follows:
\begin{proposition} \label{newproposition3}
Let $\alg{Q}$ be an arbitrary quantale. 
 \begin{enumerate}[label=\textup{(\roman*)},leftmargin=20pt,labelwidth=10pt,itemindent=0pt,labelsep=5pt,topsep=5pt,itemsep=3pt]
\item If $\alg{Q}\xrightarrow{\, h\,} \alg{Q}_{\boldsymbol{2}}$ is a strong quantale homomorphism, then $h^{\vdash}(c)$ is strongly prime in $\alg{Q}$, where $c$ is the largest prime element in $\alg{Q}_{\boldsymbol{2}}$.
\item  If $p\in \alg{Q}$ is strongly prime, then there exists a unique strong quantale homomorphism ${\alg{Q}\xrightarrow{\, h\,} \alg{Q}_{\boldsymbol{2}}}$
satisfying the condition $p=h^{\vdash}(c)$. 
\end{enumerate}
Moreover, if $\alg{Q}$ is involutive then, a strong quantale homomorphism $\alg{Q}\xrightarrow{\,h\,} \alg{Q}_{\boldsymbol{2}}$ is involutive if and only if its associated strongly prime element $p=h^{\vdash}(c)$ is hermitian.
\end{proposition}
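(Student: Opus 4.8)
The plan is to reduce Proposition~\ref{newproposition3} to the semi-unital case already treated in Remark~\ref{newremark3}, using the semi-unitalization $\widehat{\alg{Q}}$ together with Remarks~\ref{newremarks1}\,(2). The first step is an \emph{extension lemma}: given a strong quantale homomorphism $\alg{Q}\xrightarrow{\,h\,}\alg{Q}_{\boldsymbol{2}}$, the map $\widehat{h}\colon\widehat{\alg{Q}}\to\alg{Q}_{\boldsymbol{2}}$ with $\widehat{h}(\widehat{\top})=\top$ and $\widehat{h}|_{\alg{Q}}=h$ is again a strong quantale homomorphism. Join-preservation is immediate; multiplicativity is checked on the ``new'' products $\alpha\ast\widehat{\top}=\alpha\vee(\alpha\ast\top)$, $\widehat{\top}\ast\alpha=\alpha\vee(\top\ast\alpha)$ and $\widehat{\top}\ast\widehat{\top}=\widehat{\top}$, where one uses $h(\top)=\top$, the semi-unitality of $\alg{Q}_{\boldsymbol{2}}$ (to get $h(\alpha)\le h(\alpha)\star\top$ and $h(\alpha)\le\top\star h(\alpha)$) and the fact that $\alg{Q}_{\boldsymbol{2}}$ is balanced. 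Conversely, a strong homomorphism out of $\widehat{\alg{Q}}$ must send $\widehat{\top}$ to $\top$ and restricts to $\alg{Q}$, so these assignments are mutually inverse --- this is exactly the correspondence announced in the text. Finally I record the identity $\widehat{h}^{\vdash}(c)=h^{\vdash}(c)$: since $\widehat{h}(\widehat{\top})=\top\not\le c$, the set $\set{x\in\widehat{\alg{Q}}\mid\widehat{h}(x)\le c}$ lies inside $\alg{Q}$, over which suprema in $\widehat{\alg{Q}}$ and in $\alg{Q}$ agree.

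With this in hand, (i) follows by applying Remark~\ref{newremark3} to the semi-unital quantale $\widehat{\alg{Q}}$: $\widehat{h}^{\vdash}(c)$ is a prime --- hence, by semi-unitality, strongly prime --- element of $\widehat{\alg{Q}}$, so by Remarks~\ref{newremarks1}\,(2) it belongs to $\sigma_s(\alg{Q})\cup\{\top\}$; and $h^{\vdash}(c)=\top$ is impossible, since it would give $h(\top)\le c$, i.e.\ $\top\le c$, contradicting $c<\top$ in $\alg{Q}_{\boldsymbol{2}}$. Hence $h^{\vdash}(c)=\widehat{h}^{\vdash}(c)\in\sigma_s(\alg{Q})$. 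For (ii), a strongly prime $p\in\alg{Q}$ lies in $\sigma_s(\alg{Q})\subseteq\sigma_s(\widehat{\alg{Q}})=\sigma(\widehat{\alg{Q}})$, so Remark~\ref{newremark3} (for $\widehat{\alg{Q}}$) yields the unique strong homomorphism $\widehat{\alg{Q}}\xrightarrow{\,\widehat{h}\,}\alg{Q}_{\boldsymbol{2}}$ with $\widehat{h}^{\vdash}(c)=p$, given explicitly by \eqref{neweq2.8} with $p$; since $p\neq\top$ one reads off $\widehat{h}(\top)=\top$, so the restriction $h:=\widehat{h}|_{\alg{Q}}$ is a strong homomorphism with $h^{\vdash}(c)=\widehat{h}^{\vdash}(c)=p$. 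Uniqueness of $h$ is inherited from uniqueness of $\widehat{h}$ through the extension correspondence: two strong homomorphisms $\alg{Q}\to\alg{Q}_{\boldsymbol{2}}$ with the same value $h^{\vdash}(c)=p$ extend to strong homomorphisms of $\widehat{\alg{Q}}$ with the same associated prime $p$, hence coincide.

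For the involutive part, assuming $(\alg{Q},m,\ell)$ is involutive I would first check that $\widehat{\alg{Q}}$ is involutive for $\widehat{\ell}$ with $\widehat{\ell}(\widehat{\top})=\widehat{\top}$, $\widehat{\ell}|_{\alg{Q}}=\ell$: the anti-homomorphism identities at $\widehat{\top}$ reduce, using $\ell(\top)=\top$, to the already-valid identities $\widehat{\top}\ast\ell(\alpha)=\ell(\alpha)\vee(\top\ast\ell(\alpha))$ and $\ell(\alpha)\ast\widehat{\top}=\ell(\alpha)\vee(\ell(\alpha)\ast\top)$. Then $h$ is involutive iff $\widehat{h}$ is, since the sole additional instance of the commutativity square is at $\widehat{\top}$, where it reads $\top=\top^{\prime}$ and holds in $\alg{Q}_{\boldsymbol{2}}$; and $p=h^{\vdash}(c)\in\alg{Q}$ is hermitian in $\alg{Q}$ iff it is hermitian in $\widehat{\alg{Q}}$, because $\widehat{\ell}$ restricts to $\ell$. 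Chaining these with the last assertion of Remark~\ref{newremark3} for $\widehat{\alg{Q}}$ --- $\widehat{h}$ involutive iff $p$ hermitian --- gives the claim. The only genuinely delicate point, and where a careless ``just quote the semi-unital case'' argument breaks, is the distinction between strong primeness in $\alg{Q}$ and in $\widehat{\alg{Q}}$: passing to $\widehat{\alg{Q}}$ creates the new prime element $\top\in\alg{Q}$, and it is precisely the strongness of $h$ (forcing $h(\top)=\top$) that prevents $\top$ from arising as $h^{\vdash}(c)$; the analogous remark applies to hermitian elements in the involutive case.
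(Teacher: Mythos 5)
Your proposal is correct and follows essentially the same route as the paper: part (ii) and the involutive statement are both handled by passing to the semi-unitalization $\widehat{\alg{Q}}$, invoking the semi-unital correspondence of Remark~\ref{newremark3} together with Remarks~\ref{newremarks1}\,(2), and restricting back to $\alg{Q}$. The only real divergence is in part (i) and in the extension step: the paper simply cites \cite[Lem.~2.5.14]{EGHK} for (i) and asserts the correspondence $h\leftrightarrow\widehat{h}$ without verification, whereas you actually check that $\widehat{h}$ is a strong quantale homomorphism, that $\widehat{h}^{\vdash}(c)=h^{\vdash}(c)$, and then deduce (i) from the semi-unital case by ruling out $h^{\vdash}(c)=\top$ via strongness of $h$. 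This yields a more self-contained argument at the cost of some routine checking, and your closing observation --- that strongness of $h$ is precisely what excludes the spurious prime $\top\in\sigma(\widehat{\alg{Q}})$ created by semi-unitalization --- is exactly the point on which the reduction hinges. For the involutive part the paper argues slightly more directly (an involutive $h$ sends the hermitian element $c$ to a hermitian $h^{\vdash}(c)$, and the converse is read off (\ref{neweq2.8})), but your route through $\widehat{\ell}$ is equally valid.
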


\begin{proof} The assertion (i) is \cite[Lem.~2.5.14]{EGHK}. To prove (ii) we proceed as follows. Every strongly prime element $p$ in $\alg{Q}$ is a prime element in the semi-unitalization $\widehat{\alg{Q}}$ of $\alg{Q}$. Since  $\widehat{\alg{Q}}$ is semi-unital, $p$ can be identified with a unique strong quantale homomorphism 
$\widehat{\alg{Q}}\xrightarrow{\, \widehat{h}_p\,} \alg{Q}_{\boldsymbol{2}}$ such that $p=(\widehat{h}_p)^{\vdash}(c)$. Since $\widehat{h}_p(\top)=\top$ (cf.\ \ref{neweq2.8}), the restriction of $\widehat{h}_p$ to $\alg{Q}$ is a strong homomorphism $\alg{Q}\xrightarrow{\,h\,} \alg{Q}_{\boldsymbol{2}}$   satisfying $h^{\vdash}(c)=p$. The uniqueness of $h$ follows from the uniqueness of $\widehat{h}_p$.

Since $c$ is a hermitian prime element of $\alg{Q}_{\boldsymbol{2}}$, $h^{\vdash}(c)$ is hermitian  for every involutive quantale homomorphism.  Hence, the final assertion follows from (\ref{neweq2.8}).
\end{proof}

\subsection{Quantic frames}
\label{subsec:2.3}
 Let $\cat{BSQuant}$ be the category of balanced and bisym\-me\-tric quantales and strong quantale homomorphisms. We  now refer to \cite[Def.~2.5.13]{EGHK} and recall that quantic frames are special objects of $\cat{BSQuant}$. More precisely, an involutive, bisymmetric and semi-unital quantale $\alg{Q}=(\alg{Q},m,\ell)$ is called a \emph{quantic frame} if $\alg{Q}$ satisfies the following additional properties:
\begin{enumerate}[label=\textup{(\Alph*)},leftmargin=20pt,labelwidth=10pt,itemindent=0pt,labelsep=5pt,topsep=5pt,itemsep=3pt]
\item \label{A} Every left-sided (resp. right-sided) element is idempotent.
\item \label{B} Every two-sided element is hermitian. 
\item \label{C} Let 
${\mathds{I}(\alg{Q})\xhookrightarrow{\, q_{\mathds{L}(\alg{Q})}\,} \mathds{L}(\alg{Q})}$, $\mathds{I}(\alg{Q})\xhookrightarrow{\, q_{\mathds{R}(\alg{Q})}\,} \mathds{R}(\alg{Q})$, $\mathds{L}(\alg{Q}) \xhookrightarrow{\, \iota_{\mathds{L}(\alg{Q})}\,} \alg{Q}$ and $\mathds{R}(\alg{Q}) \xhookrightarrow{\, \iota_{\mathds{R}(\alg{Q})}\,} \alg{Q}$ be the inclusion arrows. 
Then the commutative diagram
\stepcounter{num}
\begin{equation}\label{neweq2.9}
\begin{tikzcd}[column sep=25pt,row sep=5pt]
&\alg{Q}&\\
\mathds{L}(\alg{Q})\arrow[hookrightarrow]{ru}{\iota_{\mathds{L}(\alg{Q})}}&&\mathds{R}(\alg{Q})\arrow[hookrightarrow,swap]{lu}{\iota_{\mathds{L}(\alg{Q})}}\\
&\mathds{I}(\alg{Q})\arrow[hookrightarrow]{lu}{q_{\mathds{L}(\alg{Q})}}
\arrow[hookrightarrow,swap]{ru}{q_{\mathds{R}(\alg{Q})}}
\end{tikzcd}
\end{equation}
\noindent
is a pushout square in $\cat{BSQuant}$.
\end{enumerate}

Before we proceed, we first recall a characterization of condition \ref{C} by a well known coequalizer condition. Since the tensorially involutive quantale $\mathds{L}(\alg{Q})\otimes \mathds{R}(\alg{Q})$ associated with a semi-unital, bisymmetric and involutive quantale $\alg{Q}$ is the coproduct of $\mathds{L}(\alg{Q})$ and $\mathds{R}(\alg{Q})$ in $\cat{BSQuant}$ with the coprojections  $\mathds{L}(\alg{Q})\xrightarrow{\, j_{\mathds{L}(\alg{Q})}\,} \mathds{L}(\alg{Q})\otimes\mathds{R}(\alg{Q})$ and $\mathds{R}(\alg{Q})\xrightarrow{\,j_{\mathds{R}(\alg{Q})}\,} \mathds{L}(\alg{Q})\otimes \mathds{R}(\alg{Q})$, for every pair of strong quantale homomorphisms $\mathds{L}(\alg{Q})\xrightarrow{\,f\,} \alg{Q}$ and $\mathds{R}(\alg{Q})\xrightarrow{\,g\,} \alg{Q}$ there exists a unique arrow   $\mathds{L}(\alg{Q})\otimes \mathds{R}(\alg{Q}) \xrightarrow{\,f\sqcup g\,} \alg{Q}$ making the following diagram commutative:
\[
\begin{tikzcd}[column sep=25pt,row sep=15pt]
&\alg{Q}&\\
\mathds{L}(\alg{Q})\arrow[r,"j_{\mathds{L}(\alg{Q})}" near end]\arrow{ru}{f}&\mathds{L}(\alg{Q})\otimes \mathds{R}(\alg{Q})\arrow{u}{f\sqcup g}&\mathds{R}(\alg{Q})\arrow[swap]{lu}{g}\arrow[l,swap,"j_{\mathds{R}(\alg{Q})}" near end]
\end{tikzcd}
\]
Then the diagram in (\ref{neweq2.9}) is a pushout square in $\cat{BSQuant}$ if and only if $\iota_{\mathds{L}(\alg{Q})}\sqcup \iota_{\mathds{R}(\alg{Q})}$ is the coequalizer of  $\mathds{I}(\alg{Q}) \xleftleftarrows[\, j_{\mathds{R}(\alg{Q})}\circ q_{\mathds{R}(\alg{Q})}\,]{\,j_{\mathds{L}(\alg{Q})}\circ q_{\mathds{L}(\alg{Q})}\,} \mathds{L}(\alg{Q})\otimes \mathds{R}(\alg{Q})$ in $\cat{BSQuant}$. Since coequalizers in $\cat{BSQuant}$ are also coequalizers in $\cat{Quant}$, $\mathds{L}(\alg{Q})\otimes \mathds{R}(\alg{Q}) \xrightarrow{\,\iota_{\mathds{L}(\alg{Q})}\sqcup \iota_{\mathds{R}(\alg{Q})}\,} \alg{Q}$ is a regular epimorphism  in $\cat{Quant}$ and consequently a surjective quantale homomorphism (cf.\ \cite[p.\ 91]{EGHK}).

\begin{remark}\label{newremark4} In what follows, if $\alg{Q}$ is a quantic frame, we will always denote  the coequalizer   
\[\mathds{I}(\alg{Q}) \xleftleftarrows[\, j_{\mathds{R}(\alg{Q})}\circ q_{\mathds{R}(\alg{Q})}\,]{\,j_{\mathds{L}(\alg{Q})}\circ q_{\mathds{L}(\alg{Q})}\,} \mathds{L}(\alg{Q})\otimes \mathds{R}(\alg{Q})\xrightarrow{\,\pi\,} \alg{Q}\]
by $\pi:=\iota_{\mathds{L}(\alg{Q})}\sqcup \iota_{\mathds{R}(\alg{Q})}$.

Let $\alg{Q}$ an involutive and semi-unital such that $\mathds{I}(\alg{Q})= \set{\bot,\top}$ --- i.e.\ $\alg{Q}$ is a factor.
\begin{enumerate}[label=\textup{(\alph*)},leftmargin=20pt,labelwidth=10pt,itemindent=0pt,labelsep=5pt,topsep=5pt,itemsep=3pt]
\item If $\alg{Q}$ is a quantic frame, then $\pi$ is an isomorphism --- i.e. $\alg{Q}$ is isomorphic to its tensorially involutive quantale. 
\item If $\mathds{L}(\alg{Q})$ is idempotent and therefore bisymmetric, then $\mathds{L}(\alg{Q})\otimes \mathds{R}(\alg{Q})$ is a quantic frame (cf. Remark~\ref{newremark1}). Consequently, $\alg{Q}_{\boldsymbol{2}}$ is the simplest \emph{non-trivial} example of a quantic frame. 
\end{enumerate}
\end{remark}

Finally, we recall some general properties of quantic frames. First, we emphasize again that every quantic frame is bisymmetric.  Further, it follows from \ref{A} and \ref{C} that every quantic frame is semi-unital, semi-integral and pre-idempotent. A quantic frame is commutative if and only if it is a traditional frame (cf.\ \cite[p.~149]{EGHK} and \cite{Johnstone}).
\begin{proposition} \label{newproposition4} Let $\alg{Q}$ be a quantic frame.
\begin{enumerate}[label=\textup{(\alph*)},leftmargin=20pt,labelwidth=10pt,itemindent=0pt,labelsep=5pt,topsep=5pt,itemsep=3pt]
\item If $\mathds{I}(\alg{Q})$ does not have zero divisors, then for $\alpha\in \mathds{L}(\alg{Q})\setminus\{\bot\} $ and for $\beta\in\mathds{R}(\alg{Q})\setminus \{\bot\}$ the relation $\pi(\alpha \otimes \beta)\neq \bot$ holds.
\item $\pi$ is involutive.
\item The restriction of the right adjoint map $\pi^{\vdash}$ of $\pi$ to $\sigma_h(\alg{Q})$ is a bijective  map from $\sigma_h(\alg{Q})$ to $\sigma_h(\mathds{L}(\alg{Q})\otimes\mathds{R}(\alg{Q}))$.
\item The restriction of $\pi$ to $\sigma_h(\mathds{L}(\alg{Q})\otimes\mathds{R}(\alg{Q}))$ is a bijective map from $\sigma_h(\mathds{L}(\alg{Q})\otimes\mathds{R}(\alg{Q}))$ to $\sigma_h(\alg{Q})$.
\end{enumerate}
\end{proposition}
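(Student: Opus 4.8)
The plan is to handle parts (b) and (a) by a short direct computation with $\pi$, and then derive parts (c) and (d) together from Proposition~\ref{newproposition3} and the fact that $\pi$ is the coequalizer of the parallel pair of Remark~\ref{newremark4}. First I would record the identity $\pi(\alpha\otimes\beta)=\beta\ast\alpha$ for $\alpha\in\mathds{L}(\alg{Q})$ and $\beta\in\mathds{R}(\alg{Q})$ (reading $\alpha,\beta$ also as elements of $\alg{Q}$): since $\alg{Q}$, hence $\mathds{L}(\alg{Q})$ and $\mathds{R}(\alg{Q})$, is semi-unital and $\mathds{L}(\alg{Q})$ is left-sided, $\mathds{R}(\alg{Q})$ right-sided, $\top$ is a left unit of $\mathds{L}(\alg{Q})$ and a right unit of $\mathds{R}(\alg{Q})$ (as in the proof of Theorem~\ref{newtheorem3}), so $\alpha\otimes\beta=(\top\otimes\beta)\star(\alpha\otimes\top)=j_{\mathds{R}(\alg{Q})}(\beta)\star j_{\mathds{L}(\alg{Q})}(\alpha)$ and therefore $\pi(\alpha\otimes\beta)=\pi(j_{\mathds{R}(\alg{Q})}(\beta))\ast\pi(j_{\mathds{L}(\alg{Q})}(\alpha))=\beta\ast\alpha$. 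Part (b) is then immediate: by Corollary~\ref{newcorollary2}, $\ell(\alpha\otimes\beta)=\beta'\otimes\alpha'$ with $\beta'\in\mathds{L}(\alg{Q})$, $\alpha'\in\mathds{R}(\alg{Q})$, so $\pi(\ell(\alpha\otimes\beta))=\alpha'\ast\beta'=(\beta\ast\alpha)'=\pi(\alpha\otimes\beta)'$ by anti-homomorphy of the involution of $\alg{Q}$, and since every element of $\mathds{L}(\alg{Q})\otimes\mathds{R}(\alg{Q})$ is a join of elementary tensors and $\pi$, $\ell$ and both involutions are join-preserving, $\pi$ is involutive. Part (a) also follows: if $\mathds{I}(\alg{Q})$ has no zero divisors then neither does $\mathds{L}(\alg{Q})$ by Lemma~\ref{newlemma2}\,\ref{(a)}, whence Lemma~\ref{newlemma2}\,\ref{(b)} gives $\beta\ast\alpha\neq\bot$ for $\alpha\in\mathds{L}(\alg{Q})\setminus\{\bot\}$ and $\beta\in\alg{Q}\setminus\{\bot\}$, in particular for $\beta\in\mathds{R}(\alg{Q})\setminus\{\bot\}$, so $\pi(\alpha\otimes\beta)=\beta\ast\alpha\neq\bot$.

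For (c) and (d) I would prove simultaneously that the restriction of $\pi^{\vdash}$ to $\sigma_h(\alg{Q})$ and the restriction of $\pi$ to $\sigma_h(\mathds{L}(\alg{Q})\otimes\mathds{R}(\alg{Q}))$ are mutually inverse bijections. As $\pi$ is a surjective strong quantale homomorphism, $\pi^{\vdash}$ is injective with $\pi\circ\pi^{\vdash}=\mathrm{id}_{\alg{Q}}$, and by part (b) and \cite[Lem.~2.2.14]{EGHK} the map $\pi^{\vdash}$ is involutive, hence sends hermitian elements to hermitian elements (and likewise $\pi$). For well-definedness of the first restriction: given $p\in\sigma_h(\alg{Q})$, Proposition~\ref{newproposition3} associates to it an \emph{involutive} strong quantale homomorphism $h_p\colon\alg{Q}\to\alg{Q}_{\boldsymbol{2}}$ with $h_p^{\vdash}(c)=p$; then $h_p\circ\pi$ is again an involutive strong quantale homomorphism out of $\mathds{L}(\alg{Q})\otimes\mathds{R}(\alg{Q})$, so by Proposition~\ref{newproposition3} its associated element $\pi^{\vdash}(h_p^{\vdash}(c))=\pi^{\vdash}(p)$ is strongly prime and hermitian, i.e.\ lies in $\sigma_h(\mathds{L}(\alg{Q})\otimes\mathds{R}(\alg{Q}))$.

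The decisive step is the surjectivity of the first restriction. Let $P\in\sigma_h(\mathds{L}(\alg{Q})\otimes\mathds{R}(\alg{Q}))$ with associated involutive strong quantale homomorphism $g\colon\mathds{L}(\alg{Q})\otimes\mathds{R}(\alg{Q})\to\alg{Q}_{\boldsymbol{2}}$, $g^{\vdash}(c)=P$. I claim $g$ coequalizes $\mathds{I}(\alg{Q}) \xleftleftarrows[\, j_{\mathds{R}(\alg{Q})}\circ q_{\mathds{R}(\alg{Q})}\,]{\,j_{\mathds{L}(\alg{Q})}\circ q_{\mathds{L}(\alg{Q})}\,} \mathds{L}(\alg{Q})\otimes \mathds{R}(\alg{Q})$: for $t\in\mathds{I}(\alg{Q})$ the tensor $t\otimes\top$ is two-sided in $\mathds{L}(\alg{Q})\otimes\mathds{R}(\alg{Q})$ (using semi-unitality of $\alg{Q}$ and $\top\ast t=t=t\ast\top$), so $g(t\otimes\top)$ is two-sided in $\alg{Q}_{\boldsymbol{2}}$, i.e.\ lies in $\mathds{I}(\alg{Q}_{\boldsymbol{2}})=\{\bot,\top\}$, whose elements are hermitian; and since $\ell(t\otimes\top)=\top\otimes t'=\top\otimes t$ by the quantic-frame axiom \ref{B}, one gets $g(\top\otimes t)=g(\ell(t\otimes\top))=g(t\otimes\top)'=g(t\otimes\top)$ because $g$ is involutive. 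As $\pi$ is the coequalizer of this pair in $\cat{Quant}$, there is a unique quantale homomorphism $h\colon\alg{Q}\to\alg{Q}_{\boldsymbol{2}}$ with $g=h\circ\pi$; it is strong ($\pi$ is a strong surjection and $g$ is strong) and involutive ($g$ is involutive and $\pi$ a surjective involutive homomorphism), so $p:=h^{\vdash}(c)\in\sigma_h(\alg{Q})$ by Proposition~\ref{newproposition3} and $\pi^{\vdash}(p)=(h\circ\pi)^{\vdash}(c)=g^{\vdash}(c)=P$. Hence $\pi(P)=\pi(\pi^{\vdash}(p))=p\in\sigma_h(\alg{Q})$ and $\pi^{\vdash}(\pi(P))=P$, which together with the injectivity of $\pi^{\vdash}$ and $\pi\circ\pi^{\vdash}=\mathrm{id}$ shows that the two restrictions are mutually inverse bijections; this is (c) and (d). I expect the main obstacle to be exactly this factoring step — recognizing that axiom \ref{B} (two-sided elements are hermitian), combined with $\alg{Q}_{\boldsymbol{2}}$ being a factor, forces every involutive strong homomorphism out of $\mathds{L}(\alg{Q})\otimes\mathds{R}(\alg{Q})$ into the quantization of $\boldsymbol{2}$ to descend along $\pi$; everything else is routine manipulation of right adjoints, Proposition~\ref{newproposition3} and the universal property of coequalizers.
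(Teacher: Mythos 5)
Your proof is correct and follows essentially the same route as the paper's: the identity $\pi(\alpha\otimes\beta)=\beta\ast\alpha$ underlying (a) and (b), and for (c)/(d) the factorization through the coequalizer $\pi$ of the involutive strong quantale homomorphism attached to a hermitian prime of $\mathds{L}(\alg{Q})\otimes\mathds{R}(\alg{Q})$, with axiom \ref{B} supplying the equalizing condition. The only cosmetic differences are that the paper proves (a) by passing to the two-sided elements $\alpha\ast\top$ and $\top\ast\beta$ and applying the no-zero-divisor hypothesis on $\mathds{I}(\alg{Q})$ directly rather than via Lemma~\ref{newlemma2}, and that it verifies the equalizing condition through the order equivalence $\alpha\otimes\top\le p\iff\top\otimes\alpha\le p$ together with (\ref{neweq2.8}) rather than, as you do, from the involutivity of $g$ and $\mathds{I}(\alg{Q}_{\boldsymbol{2}})=\set{\bot,\top}$.
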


\begin{proof} (a) We choose $\alpha\in \mathds{L}(\alg{Q})$ and  $\beta\in\mathds{R}(\alg{Q})$ with $\pi(\alpha\otimes \beta)=\bot$. Then $\alpha\ast \top$ and $\top\ast \beta$ are two-sided and since $\iota_{\mathds{L}(\alg{Q})} =\pi \circ  j_{\mathds{L}(\alg{Q})}$ and $\iota_{\mathds{R}(\alg{Q})} =\pi \circ  j_{\mathds{R}(\alg{Q})}$, we obtain:
\begin{align*}\top\ast  \beta\ast\alpha\ast \top&= (\pi\circ j_{\mathds{R}(\alg{Q})})(\top\ast \beta)\ast(\pi\circ j_{\mathds{L}(\alg{Q})})(\alpha\ast \top)
=\pi((\top \otimes (\top\ast  \beta))\star ((\alpha\ast \top)\otimes \top))
\\
&= \pi((\top\otimes \top)\star(\alpha \otimes \beta)\star(\top\otimes \top))= \pi(\top\otimes \top)\ast \pi(\alpha \otimes \beta)\ast \pi(\top\otimes \top)=\bot,
\end{align*}
 hence $\top\ast \beta=\bot$ or $\alpha \ast \top=\bot$ holds. Since $\alg{Q}$ is semi-unital, $\alpha=\bot$ or $\beta=\bot$ follows.\\[2mm]
 (b) Let $\alpha\in \mathds{L}(\alg{Q})$ and $\beta\in\mathds{R}(\alg{Q})$. Since $\pi(\alpha\otimes \top)=\alpha$, $\pi(\top\otimes\beta)=\beta$, $\pi(\beta^{\prime}\otimes \top)=\beta^{\prime}$, $\pi(\top\otimes\alpha^{\prime})=\alpha^{\prime}$, and $\mathds{L}(\alg{Q})\otimes \mathds{R}(\alg{Q})$ is the tensorially involutive quantale associated with $\alg{Q}$, we refer to Corollary~\ref{newcorollary2} and obtain:
\begin{align*}\pi(\alpha\otimes \beta)^{\prime}&=\pi((\top\otimes \beta)\ast (\alpha\otimes \top))^{\prime}=(\pi(\top\otimes \beta)\ast \pi(\alpha\otimes \top))^{\prime}= (\beta\ast \alpha)^{\prime}=\alpha^{\prime}\ast \beta^{\prime}\\
&=\pi(\top\otimes \alpha^{\prime})\ast \pi(\beta^{\prime}\otimes \top)=\pi((\top\otimes \alpha^{\prime})\ast (\beta^{\prime}\otimes \top))=\pi(\beta^{\prime}\otimes \alpha^{\prime})=\pi(\ell(\alpha\otimes \beta)).
\end{align*}
(c) Since $\pi$ is surjective and involutive (cf.\ (b)), the right adjoint map ${\alg{Q}}\xrightarrow{\, \pi^{\vdash}\,} \mathds{L}(\alg{Q})\otimes \mathds{R}(\alg{Q})$ of $\pi$ is injective and also involutive. Hence $\pi^{\vdash}(p)\in \sigma_h(\mathds{L}(\alg{Q})\otimes \mathds{R}(\alg{Q}))$ for each $p\in\sigma_h(\alg{Q})$. 
Now let $p$ be a hermitian prime element of $\mathds{L}(\alg{Q})\otimes \mathds{R}(\alg{Q})$. Then we conclude from Remark~\ref{newremark3} that $p$ can be identified with  an involutive and strong quantale homomorphism $\mathds{L}(\alg{Q})\otimes \mathds{R}(\alg{Q})\xrightarrow{\,h_p\,} \alg{Q}_{\boldsymbol{2}}$ satisfying $h_p^{\vdash}(c)=p$.  Since every two-sided element of $\alg{Q}$ is hermitian (cf.\ \ref{B}), the following equivalence follows from Corollary~\ref{newcorollary2} for all $\alpha\in \mathds{I}(\alg{Q})$:
\[\alpha \otimes \top \le p\,\,\iff\,\, \top \otimes \alpha= \top \otimes \alpha^{\prime}=(\alpha \otimes \top)^{\prime}\le p^{\prime}=p.\]
Since $\mathds{L}(\alg{Q})\otimes \mathds{R}(\alg{Q})$ is semi-unital, the previous equivalence means (cf.\ (\ref{neweq2.8})):
 \[h_p\circ j_{\mathds{L}(\alg{Q})}\circ q_{\mathds{L}(\alg{Q})}=h_p\circ j_{\mathds{R}(\alg{Q})}\circ q_{\mathds{R}(\alg{Q})}.\] 
 Hence, $h_p$  factors through $\mathds{L}(\alg{Q}) \otimes \mathds{R}(\alg{Q}) \xrightarrow{\,\pi\,}\alg{Q}$, and consequently, there exists a unique strong quantale homomorphism $\alg{Q}\xrightarrow{\, h_0\,} \alg{Q}_{\boldsymbol{2}}$ such that $h_p=h_0\circ \pi$. Since $h_p$ and $\pi$ are involutive, so is $h_0$, and $p_0:=h_0^{\vdash}(c)\in \sigma_h(\alg{Q})$ holds. Hence, $p=h_p^{\vdash}(c)= \pi^{\vdash}(h_0^{\vdash}(c))=\pi^{\vdash}(p_0)$ follows,
and the image 
of the restriction of $\pi^{\vdash}$ to $\sigma_h(\alg{Q})$ coincides with $\sigma_h(\mathds{L}(\alg{Q})\otimes \mathds{R}(\alg{Q}))$ --- i.e.\ (c)  is verified.\\[2mm]
(d) Being a coequalizer, $\pi$ is necessarily surjective, and the relation $\pi\circ \pi^{\vdash}=1_{\alg{Q}}$ holds. Hence, we conclude from (c) that it is sufficient to show that the restriction of $\pi$ to $\sigma_h(\mathds{L}(\alg{Q})\otimes \mathds{R}(\alg{Q}))$ is injective. Therefore, let $p\in \sigma_h(\mathds{L}(\alg{Q})\otimes \mathds{R}(\alg{Q}))$. With regard to (c) we can choose $r\in \sigma_h(\alg{Q})$ with $p=\pi^{\vdash}(r)$. Then we obtain
$\pi^{\vdash}(\pi(p))=\pi^{\vdash}(r)=p$.
Hence (d) follows.
\end{proof}

\begin{corollary} \label{newcorollary4} Let $\alg{Q}$ be a quantic frame. Then
there exists a bijective map $\sigma(\mathds{L}(\alg{Q})) \xrightarrow{\, \varphi\,} \sigma_h(\alg{Q})$ given by
 $\varphi(q)=q\vee q^{\prime}$ for each $q\in \sigma(\mathds{L}(\alg{Q}))$.
\end{corollary}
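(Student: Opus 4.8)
The plan is to obtain $\varphi$ as the composite of two bijections that are already available. Since a quantic frame $\alg{Q}$ is involutive, semi-unital and bisymmetric, $\mathds{L}(\alg{Q})\otimes\mathds{R}(\alg{Q})$ is precisely the tensorially involutive quantale associated with $\alg{Q}$, so Corollary~\ref{newcorollary3} supplies the bijection $\psi_l\colon\sigma(\mathds{L}(\alg{Q}))\to\sigma_h(\mathds{L}(\alg{Q})\otimes\mathds{R}(\alg{Q}))$ given by $\psi_l(q)=(q\otimes\top)\vee(\top\otimes q^{\prime})$. On the other hand, Proposition~\ref{newproposition4}(d) states that the restriction of $\pi$ to $\sigma_h(\mathds{L}(\alg{Q})\otimes\mathds{R}(\alg{Q}))$ is a bijection onto $\sigma_h(\alg{Q})$. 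Composing, $\pi\circ\psi_l$ is a bijection from $\sigma(\mathds{L}(\alg{Q}))$ to $\sigma_h(\alg{Q})$, and it only remains to identify this composite with the stated formula.

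For this, fix $q\in\sigma(\mathds{L}(\alg{Q}))$. Since the involution of $\alg{Q}$ restricts to an anti-isomorphism between $\mathds{L}(\alg{Q})$ and $\mathds{R}(\alg{Q})$, one has $q^{\prime}\in\mathds{R}(\alg{Q})$, so $\psi_l(q)=(q\otimes\top)\vee(\top\otimes q^{\prime})$ is a well-defined element of $\mathds{L}(\alg{Q})\otimes\mathds{R}(\alg{Q})$. Recalling that $\pi:=\iota_{\mathds{L}(\alg{Q})}\sqcup\iota_{\mathds{R}(\alg{Q})}$ satisfies $\pi\circ j_{\mathds{L}(\alg{Q})}=\iota_{\mathds{L}(\alg{Q})}$ and $\pi\circ j_{\mathds{R}(\alg{Q})}=\iota_{\mathds{R}(\alg{Q})}$ (cf.\ Remark~\ref{newremark4} and the proof of Proposition~\ref{newproposition4}), and that $\pi$ is join-preserving, we obtain $(\pi\circ\psi_l)(q)=\pi(q\otimes\top)\vee\pi(\top\otimes q^{\prime})=q\vee q^{\prime}$. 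Hence $\varphi=\pi\circ\psi_l$ is the asserted bijection.

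No genuine obstacle arises: the argument is purely a concatenation of Corollary~\ref{newcorollary3} and Proposition~\ref{newproposition4}(d), and the only step that deserves comment is the observation that $q^{\prime}$ is right-sided whenever $q$ is left-sided, which is what makes $\top\otimes q^{\prime}$ meaningful. One could instead verify directly that $q\mapsto q\vee q^{\prime}$ is well defined into $\sigma_h(\alg{Q})$, injective and surjective, but routing through $\psi_l$ and $\pi$ renders all three properties immediate.
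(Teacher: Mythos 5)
Your proof is correct and coincides with the paper's own argument: the paper likewise obtains $\varphi$ by composing the bijection $\psi_l$ of Corollary~\ref{newcorollary3} with the restriction of $\pi$ from Proposition~\ref{newproposition4}\,(d), using $\iota_{\mathds{L}(\alg{Q})}=\pi\circ j_{\mathds{L}(\alg{Q})}$ and $\iota_{\mathds{R}(\alg{Q})}=\pi\circ j_{\mathds{R}(\alg{Q})}$ to identify $\pi\bigl((q\otimes\top)\vee(\top\otimes q^{\prime})\bigr)$ with $q\vee q^{\prime}$. You merely spell out the computation the paper leaves implicit.
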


\begin{proof} 
Since $\iota_{\mathds{L}(\alg{Q})} =\pi \circ  j_{\mathds{L}(\alg{Q})}$ and $\iota_{\mathds{R}(\alg{Q})} =\pi \circ  j_{\mathds{R}(\alg{Q})}$, the assertion follows from Corollary \ref{newcorollary3} and Proposition~\ref{newproposition4}\,(d).
\end{proof}

\begin{proposition} \label{newproposition5} A quantic frame  $\alg{Q}$ does not have zero divisors if and only if $\mathds{I}(\alg{Q})$ does not have zero divisors.
\end{proposition}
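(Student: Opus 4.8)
The plan is to settle necessity in one line and then handle sufficiency by transporting the equation $\alpha\ast\beta=\bot$ into the subquantale $\mathds{I}(\alg{Q})$, where no zero divisors are available by hypothesis. \textbf{Necessity} is immediate: $\mathds{I}(\alg{Q})$ is a subquantale of $\alg{Q}$ sharing its bottom element and (the restriction of) its multiplication, so if $\alg{Q}$ has no zero divisors, neither does $\mathds{I}(\alg{Q})$.

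For \textbf{sufficiency} I would exploit that every quantic frame is semi-unital and semi-integral --- and therefore balanced --- as recorded just before Proposition~\ref{newproposition4}. Assume $\mathds{I}(\alg{Q})$ has no zero divisors and let $\alpha,\beta\in\alg{Q}$ with $\alpha\ast\beta=\bot$. Consider the two-sided elements $u:=\top\ast\alpha\ast\top$ and $v:=\top\ast\beta\ast\top$. Using $\top\ast\top=\top$ (balance) and applying semi-integrality in the form $x\ast\top\ast y\le x\ast y$ with $x=\top\ast\alpha$ and $y=\beta\ast\top$, one gets
\[ u\ast v=\top\ast\alpha\ast\top\ast\beta\ast\top\le\top\ast\alpha\ast\beta\ast\top=\bot, \]
so $u\ast v=\bot$ in $\mathds{I}(\alg{Q})$. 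Since $\mathds{I}(\alg{Q})$ has no zero divisors, $u=\bot$ or $v=\bot$; and semi-unitality gives $\alpha\le\top\ast\alpha\ast\top=u$ and $\beta\le\top\ast\beta\ast\top=v$, whence $\alpha=\bot$ or $\beta=\bot$. Thus $\alg{Q}$ has no zero divisors.

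This argument is essentially the one already used for Proposition~\ref{newproposition4}\,(a), and one could instead deduce the claim from that proposition directly: given $\alpha,\beta\neq\bot$, the elements $\top\ast\beta\in\mathds{L}(\alg{Q})\setminus\{\bot\}$ and $\alpha\ast\top\in\mathds{R}(\alg{Q})\setminus\{\bot\}$ are nonzero by semi-unitality, and using $\iota_{\mathds{L}(\alg{Q})}=\pi\circ j_{\mathds{L}(\alg{Q})}$, $\iota_{\mathds{R}(\alg{Q})}=\pi\circ j_{\mathds{R}(\alg{Q})}$ one computes $\pi\bigl((\top\ast\beta)\otimes(\alpha\ast\top)\bigr)=(\alpha\ast\top)\ast(\top\ast\beta)=\alpha\ast\top\ast\beta\le\alpha\ast\beta$, which is $\neq\bot$ by Proposition~\ref{newproposition4}\,(a). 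There is no genuine obstacle; the only point needing care is the bookkeeping of the left- and right-handed instances of semi-unitality and semi-integrality.
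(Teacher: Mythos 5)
Your proof is correct, but it takes a genuinely different route from the paper's. The paper argues through the coequalizer $\mathds{L}(\alg{Q})\otimes\mathds{R}(\alg{Q})\xrightarrow{\,\pi\,}\alg{Q}$: it uses surjectivity of $\pi$ to lift $\alpha_0,\beta_0$ to elementary tensors with nonzero images, invokes Lemma~\ref{newlemma2}\,(a) to see that the componentwise products $\alpha_1\ast\beta_1$ and $\alpha_2\ast\beta_2$ are nonzero, and then applies Proposition~\ref{newproposition4}\,(a). Your primary argument bypasses the tensor-product presentation entirely: pushing $\alpha\ast\beta=\bot$ into $\mathds{I}(\alg{Q})$ via $\alpha\mapsto\top\ast\alpha\ast\top$ uses only semi-unitality and semi-integrality (the elements $\top\ast\alpha\ast\top$ are two-sided by monotonicity alone, and the chain $u\ast v\le(\top\ast\alpha)\ast\top\ast(\beta\ast\top)\le\top\ast(\alpha\ast\beta)\ast\top=\bot$ is sound). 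This is both more elementary and strictly more general --- it establishes the equivalence for every semi-unital and semi-integral quantale, with the pushout condition \ref{C} playing no role. Your alternative argument via the single elementary tensor $(\top\ast\beta)\otimes(\alpha\ast\top)$ is closer in spirit to the paper's (both route through Proposition~\ref{newproposition4}\,(a)) but is still leaner, since it avoids the lift of $\alpha_0,\beta_0$ to elementary tensors and the appeal to Lemma~\ref{newlemma2}\,(a). What the paper's approach buys is consistency with the surrounding development, where $\pi$ is the central object; what yours buys is a sharper statement at lower cost.
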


\begin{proof} The necessity is obvious. To verify the sufficiency we choose $\alpha_0,\beta_0\in\alg{Q}$ with $\alpha_0\ast\beta_0=\bot$ and assume $\alpha_0\neq\bot$ and $\beta_0\neq\bot$.  Being a coequalizer, $\pi$ is necessarily surjective, and we now choose elementary tensors $\alpha_1\otimes \alpha_2,\,\beta_1\otimes \beta_2\in \mathds{L}(\alg{Q})\otimes \mathds{R}(\alg{Q})$ such that
\[\bot \neq \pi(\alpha_1\otimes \alpha_2) \le \alpha_0 \quad \text{and}\quad \bot\neq\pi(\beta_1\otimes \beta_2)\le \beta_0\]
hold. Hence $\alpha_1\neq\bot,\,\beta_1\neq\bot,\,\alpha_2\neq \bot$ and $\beta_2\neq\bot$. Since $\mathds{I}(\alg{Q})$ does not have zero divisors, we conclude from 
 Lemma~\ref{newlemma2}\,(a) that $\alpha_1\ast\beta_1\neq \bot$ and $ \alpha_2\ast \beta_2\neq\bot$. Now we apply Proposition~\ref{newproposition4}\,(a) and obtain
$\pi\bigl((\alpha_1\ast\beta_1)\otimes (\alpha_2\ast\beta_2)\bigr) \neq \bot$. Thus
\[\bot\neq \pi\bigl((\alpha_1\ast\beta_1)\otimes (\alpha_2\ast\beta_2)\bigr) =\pi(\alpha_1\otimes \alpha_2) \ast \pi(\beta_1\otimes\beta_2)\le \alpha_0\ast\beta_0\]
follows, which is a contradiction. Hence the assumption is false and the assertion is verified.
\end{proof}

\subsection{The spectrum of a unital $C^*$-algebra}
\label{subsec:2.4}
The next example shows that every unital \donotbreakdash{$C^*$}algebra $A$ gives rise to an involutive quantale playing the role of the spectrum of $A$.
\begin{example}\label{newexample2} Let $A=(A,+,\cdot,\mbox{}^{*}, 1)$ be a unital \donotbreakdash{$C^*$}algebra with unit $1$ and involution $\mbox{}^{*}$.  Following \cite{Mulvey01}, we consider the involutive and unital quantale $\mathds{M}axA$ of all closed linear subspaces of $A$ provided with the following quantale multiplication:
\[M_1\ast M_2= \text{top.\ closure}\bigl(\bigset{ \tsum\limits_{i=1}^n a_i\cdot b_i\mid n\in \mathds{N},\, a_i\in M_1,\, b_i\in M_2}\bigr),\qquad M_1,M_2\in \mathds{M}axA.\]
The involution of  $\mathds{M}axA$ assigns to any closed linear subspace $M$ the adjoint closed linear subspace $M^{\prime}$ determined by
\[M^{\prime}=\set{a^*\in A\mid a\in M},\]
where $\mbox{}^*$  is the involution of the given \donotbreakdash{$C^*$}algebra $A$.

The subquantale $\mathds{L}(\mathds{M}axA)$ of all left-sided elements of $\mathds{M}axA$  coincides with the quantale $\mathds{L}(A)$ of all closed left ideals of $A$, and the restriction of the quantale multiplication $\ast$ to $\mathds{L}(A)$ is the usual \emph{ideal multiplication}. The same applies to right-sided elements of $\mathds{M}axA$ --- i.e.\ $\mathds{R}(\mathds{M}axA)=\mathds{R}(A)$ is the quantale of all closed right ideals of $A$ and again the restriction of $\ast$ is the usual ideal multiplication.

The extension of the quantale multiplication $\ast$ to a semigroup operation in $\cat{Sup}$ is denoted by $m$, and consequently, the extension of the ideal multiplication in $\mathds{L}(A)$ and $\mathds{R}(A)$ is denoted by $m_{\mathds{L}}$ in $\mathds{L}(A)$ and  $m_{\mathds{R}}$ in $\mathds{R}(A)$, respectively.  Since $\mathds{M}axA$  is unital, 
$(\mathds{L}(A),m_{\mathds{L}})$ and $(\mathds{R}(A),m_{\mathds{R}})$ are semi-unital quantales. 
In particular, the universal upper bound $\top=A$ is a left unit in $\mathds{L}(A)$ and a right unit in $\mathds{R}(A)$, and consequently, $(\mathds{L}(A),m_{\mathds{L}})$ and $(\mathds{R}(A),m_{\mathds{R}})$ are semi-integral. Therefore, we can apply Fact 1 and conclude that maximal elements of  $\mathds{L}(A)$ and $\mathds{R}(A)$, respectively, are strongly prime, since $(\mathds{L}(A),m_{\mathds{L}})$ and $(\mathds{R}(A),m_{\mathds{R}})$ are semi-unital. Now we use the fundamental property of \donotbreakdash{$C^*$}algebras that every closed left (resp.\ right) ideal of $A$ is an intersection of maximal left (resp.\ right) ideals of $A$ (cf.\ \cite[10.2.10~Thm.\,(iii)]{Kadison2}). Hence the semi-unital quantales  $(\mathds{L}(A),m_{\mathds{L}})$ and $(\mathds{R}(A),m_{\mathds{R}})$ are strongly spatial, and Proposition~\ref{newproposition1} implies that the ideal multiplication in $\mathds{L}(A)$ and $\mathds{R}(A)$ is bisymmetric and idempotent (cf.\ Proposition~\ref{newproposition1}\,(iii)).

With regard to the previous observations, we conclude that the tensorially involutive quantale $(\mathds{L}(A)\otimes \mathds{R}(A),m_{\mathds{L}}\otimes m_{\mathds{R}},\ell)$ associated with $\mathds{M}axA$ is semi-unital, semi-integral, pre-idempotent and bisymmetric (cf.\ Remark~\ref{newremark1}). Since the one-to-one relationship between pure states of $A$ and maximal left ideals is given by the construction of left kernels (cf.\ \cite[Thm.~10.2.10\,(i) and (ii)]{Kadison2}), 
we conclude from Corollary~\ref{newcorollary3} that every pure state of $A$ can be identified with a hermitian
prime element of the tensorially involutive quantale associated with $\mathds{M}axA$.

When the \donotbreakdash{$C^*$}algebra $A$ is commutative, the quantale $\mathds{M}axA$ differs from the quantale of closed ideals of $A$. Hence, the quantale $\mathds{M}axA$ is inappropriate in order to play the role of a spectrum of a \donotbreakdash{$C^*$}algebra $A$. Therefore, we deviate from the approach taken by Mulvey and Pelletier and construct the spectrum of a \donotbreakdash{$C^*$}algebra as follows.

In the first step, we consider the quantale $\mathds{I}(A)$ of all closed two-sided ideals of $A$ provided with the ideal multiplication. Then the universal upper bound $\top=A$ is the unit  with respect to the ideal multiplication, and consequently, $\mathds{I}(A)$ is a frame, and the ideal multiplication coincides with the binary meet operation (cf.\ \cite[p.~120]{EGHK}). 

Moreover, if $\mathds{I}(A)\otimes \mathds{I}(A) \xrightarrow{\, m\,} \mathds{I}(A)$ is now the semigroup operation in the sense of $\cat{Sup}$ corresponding to the binary meet in $\mathds{I}(A)$, then $m$ is commutative and  even a quantale homomorphism 
\[(\mathds{I}(A)\otimes \mathds{I}(A), m\otimes m) \xrightarrow{\, m\,} (\mathds{I}(A),m).\]
\noindent 
In fact, if $I_1$, $I_2$, $I_3$ and $I_4$ are closed two-sided ideals of $A$, then the following relation holds for elementary tensors:
\[ m((I_1\otimes I_2)\star (I_3\otimes I_4))=m((I_1\wedge I_3) \otimes (I_2\wedge I_4))= I_1\wedge I_3\wedge I_2\wedge I_4= m(I_1\otimes I_2)\wedge m(I_3\otimes I_4).\] 

In a second step, we refer to the inclusion maps $\mathds{I}(A)\xhookrightarrow{\, q_{\mathds{L}(A)}\,}\mathds{L}(A)$ and $\mathds{I}(A)\xhookrightarrow{\, q_{\mathds{R}(A)}\,} \mathds{R}(A)$ and consider the coequalizer 
\[(\mathds{L}(A)\otimes \mathds{R}(A), m_{\mathds{L}} \otimes m_{\mathds{R}}) \xrightarrow{\,\pi\,} 
(\mathds{V},m_{\mathds{V}})\] 
\noindent of $\mathds{I}(A) \xleftleftarrows[\, j_{\mathds{L}(A)}\circ  q_{\mathds{L}(A)}\,]{\,j_{\mathds{R}(A)}\circ  q_{\mathds{R}(A)}\,} \mathds{L}(A)\otimes \mathds{R}(A)$ in the sense of $\cat{Quant}$. Then we view the range of $\pi$ as the \emph{quantale} representing the concept of \emph{non-commutative closed ideals} of $A$. Hence, the \emph{spectrum} of $A$ is given by $(\mathds{V},m_{\mathds{V}})$. 

Before we proceed, we explore some properties of the spectrum of $A$.

First, we notice that the domain of $\pi$ is precisely the tensorially involutive quantale associated with $\mathds{M}axA$. Hence, the spectrum of $A$ is a semi-unital, semi-integral, pre-idempotent and bisymmetric quantale. Moreover, since every closed two-sided ideal is self-adjoint (cf.\ \cite[p.~239 and 4.2.10~Cor.]{Kadison1}), the inclusion maps 
$\mathds{I}(A)\xhookrightarrow{\, q_{\mathds{L}(A)}\,}\mathds{L}(A)$ and $\mathds{I}(A)\xhookrightarrow{\, q_{\mathds{R}(A)}\,} \mathds{R}(A)$ satisfy the property:
\begin{equation*}\vartheta_{\mathds{L}(A)}\circ q_{\mathds{L}(A)}=q_{\mathds{R}(A)} \quad \text{and}\quad \vartheta_{\mathds{R}(A)}\circ q_{\mathds{R}(A)}=q_{\mathds{L}(A)}.
\end{equation*}
\noindent 
where $\mathds{L}(A) \xrightarrow{\, \vartheta_{\mathds{L}(A)}\,} \mathds{R}(A)$ and $\mathds{R}(A) \xrightarrow{\, \vartheta_{\mathds{R}(A)}\,} \mathds{L}(A)$ are given by
\[\vartheta_{\mathds{L}(A)}(M)=M^{\prime} \quad \text{and} \quad \vartheta_{\mathds{R}(A)}(N)=N^{\prime}, \qquad M\in \mathds{L}(A),\, N\in \mathds{R}(A).\]
Then we conclude from Theorem~\ref{newtheorem2} that the nucleus associated with $\pi$ is involutive, and so  Proposition~\ref{newproposition2} implies that there exists an involution on the spectrum of $A$ turning the coequalizer $\pi$ into an involutive quantale epimomorphism. Thus, the spectrum of $A$ is in fact an involutive quantale $(\mathds{V},m_{\mathds{V}},\ell_{\mathds{V}})$, which we will denote by $\Omega_q(A)$ (cf.\ \cite[Rem.~5.3]{GH24}).

We finish this example with a discussion concerning the commutative setting. So, if $A$ is commutative, then $\mathds{L}(A)=\mathds{R}(A)=\mathds{I}(A)$, $q_{\mathds{L}(A)}=q_{\mathds{R}(A)}=1_{\mathds{I}(A)}$, and the coequalizer of 
\[\mathds{I}(A) \xleftleftarrows[\, j_{\mathds{L}(A)}\,]{\,j_{\mathds{R}(A)}\,} \mathds{I}(A)\otimes \mathds{I}(A)\]
 is the semigroup operation $\mathds{I}(A)\otimes \mathds{I}(A)\xrightarrow{\,m\,} \mathds{I}(A)$ corresponding to the ideal multiplication in $\mathds{I}(A)$.
In fact, if $\mathds{I}(A)\otimes \mathds{I}(A)\xrightarrow{\,k\,} \mathds{D}$  is a quantale homomorphism with $k\circ j_{\mathds{L}(A)}=k\circ j_{\mathds{R}(A)}$, then every quantale homomorphism $\varphi$ making the diagram 
\stepcounter{num}
\begin{equation}\label{neweq2.10}
\begin{tikzcd}[column sep=25pt,row sep=14pt
   ]
\mathds{I}(A)\otimes \mathds{I}(A)\arrow{r}{m}\arrow[swap]{d}{k}&\mathds{I}(A)\arrow{dl}{\varphi}\\
\mathds{D}
\end{tikzcd}
\end{equation}
commutative has the form $\varphi=k\circ j_{\mathds{L}(A)}\, (=k\circ j_{\mathds{R}(A)})$. Obviously, the commutativity of (\ref{neweq2.10}) implies for all $I\in \mathds{I}(A)$:
\[k(j_{\mathds{L}(A)}(I))= \varphi(m(j_{\mathds{L}(A)}(I)))=\varphi(m(I\otimes A))=\varphi(I\ast A)=\varphi(I),\]
i.e.\ 
$\varphi$ is uniquely determined by $k$. 
On the other hand, in order to verify the existence of $\varphi$, we define $\varphi:= k\circ j_{\mathds{L}(A)}$ and observe that for elementary tensors $I\otimes J\in \mathds{I}(A)\otimes \mathds{I}(A)$ the following relation holds:
\begin{align*}k(I\otimes J)&= k((A\otimes J)\star (I\otimes A))= k(j_{\mathds{R}(A)}(J))\ast k(j_{\mathds{L}(A)}(I))=k(j_{\mathds{L}(A)}(J))\ast k(j_{\mathds{L}(A)}(I))\\
&= k(j_{\mathds{L}(A)}(J \ast I))=k(j_{\mathds{L}(A)}(I\ast J))= \varphi(I\ast J) =(\varphi\circ m)(I\otimes J)
\end{align*}
where we have used the commutativity of the ideal multiplication in $\mathds{I}(A)$.

Since closed two-sided ideals are always self-adjoint (cf.\ \cite[Cor.~4.2.10]{Kadison1}), the involution on $\mathds{I}(A)$ is always  the identity. Hence, if $A$ is \emph{commutative}, we conclude that $\Omega_q(A)$ is isomorphic to the frame $(\mathds{I}(A),m)$ provided with the identity as involution. Thus, in the commutative setting the \emph{spectrum of $A$} is equivalent to the \emph{traditional spectrum of $A$} given by the frame of all closed ideals  of $A$.
\end{example}

Since the spectrum of a unital \donotbreakdash{$C^*$}algebra $A$ is balanced and bisym\-me\-tric, the construction in Example~\ref{newexample2} has a categorical base. Indeed, $\mathds{L}(A)\otimes \mathds{R}(A)$ is the coproduct of $\mathds{L}(A)$ and $\mathds{R}(A)$ with the coprojections $j_{\mathds{L}(A)}$ and $j_{\mathds{R}(A)}$ in $\cat{BSQuant}$ (cf.\ \cite[Thm.~2.5.9]{EGHK}). Hence, the coequalizer $\mathds{L}(A)\otimes \mathds{R}(A) \xrightarrow{\,\pi\,} \Omega_q(A)$ of $\mathds{I}(A) \xleftleftarrows[\, j_{\mathds{L}(A)}\circ q_{\mathds{L}(A)}\,]{\,j_{\mathds{R}(A)}\circ q_{\mathds{R}(A)}\,} \mathds{L}(A)\otimes \mathds{R}(A)$ in $\cat{Quant}$ is also the coequalizer in $\cat{BSQuant}$, and consequently, $\Omega_q(A)$  is the \emph{pushout} of the inclusion arrows $\mathds{I}(A)\xhookrightarrow{\, q_{\mathds{L}(A)}\,} \mathds{L}(A)$ and ${\mathds{I}(A)\xhookrightarrow{\, q_{\mathds{R}(A)}\,} \mathds{R}(A)}$ in the sense of $\cat{BSQuant}$.

In the following, we show that the subquantale $\mathds{L}(\Omega_q(A))$ of all left-sided elements of $\Omega_q(A)$  is isomorphic to $\mathds{L}(A)$. For this purpose we consider the unital quantale $\mathds{M}axA$ (cf. Example~\ref{newexample2}) and consider its semi-integral regularization $\odot$ --- i.e.\ 
\[M\odot N= M\ast A \ast N,\qquad M,N\in \mathds{M}axA.\]
Then $(\mathds{M}axA,m_{\odot})$ (where $m_{\odot}$ denotes the corresponding semigroup operation in $\cat{Sup}$) is a bisym\-me\-tric and semi-unital quantale (cf.\ \cite[Prop.~2.5.3]{EGHK}). Hence, it is an object of $\cat{BSQuant}$ with the inclusion arrows $\mathds{L}(A)\xhookrightarrow{\,\iota _{\mathds{L}(A)}\,} \mathds{M}axA$ and $\mathds{R}(A)\xhookrightarrow{\, \iota _{\mathds{R}(A)}\,} \mathds{M}axA$. Since ${\mathds{L}(A)\otimes \mathds{R}(A)}$
 is the coproduct of $\mathds{L}(A)$ and $\mathds{R}(A)$ in $\cat{BSQuant}$, there exists a unique strong quantale homorphism $(\mathds{L}(A)\otimes\mathds{R}(A),m_{\mathds{L}}\otimes m_{\mathds{R}})\xrightarrow{\, h\,} (\mathds{M}axA,m_{\odot})$ satisfying the conditions $\iota _{\mathds{L}(A)}=h\circ j_{\mathds{L}(A)}$ and $\iota _{\mathds{R}(A)}=h\circ j_{\mathds{R}(A)}$. Moreover, for all elementary tensors the following relation holds:
\begin{equation*}
h(a\otimes b)=h((A\otimes b)\star (a\otimes A))= b \odot a =b \ast a, \qquad a\in \mathds{L}(A),\, b\in \mathds{R}(A).
\end{equation*}
Hence, $h\circ j_{\mathds{L}(A)}\circ q_{\mathds{L}(A)}=h\circ j_{\mathds{R}(A)}\circ q_{\mathds{R}(A)}$ follows, and $h$ factors through 
${\mathds{L}(A)\otimes \mathds{R}(A) \xrightarrow{\,\pi\,} \Omega_q(A)}$
--- i.e.\ there exists a unique strong quantale homomorphism ${\Omega_q(A)\xrightarrow{\, \widehat{h}\,} \mathds{M}axA}$ such that $\widehat{h}\circ \pi=h$ holds.
In particular, $\widehat{h}\circ \pi \circ j_{\mathds{L}(A)}=h \circ j_{\mathds{L}(A)}=\iota_{\mathds{L}(A)}$ follows, and so $\pi\circ j_{\mathds{L}(A)}$ is injective  --- i.e.\ a monomorphism in $\cat{Quant}$. Further, the image of $\pi\circ j_{\mathds{L}(A)} $ is contained in $\mathds{L}(\Omega_q(A))$. On the other hand,  we choose $a\otimes b\in \mathds{L}(A)\otimes \mathds{R}(A)$ and assume that  $\pi(a\otimes b)$ is left-sided in $\Omega_q(A)$. Then the relation
$\pi(a\otimes b)=\pi(a\otimes (A\ast b))$ follows. Since $A\ast b$ is a closed two-sided ideal of $A$, the construction of $\pi$ implies:
\begin{align*}
\pi(a\otimes b)&=\pi(a\otimes (A\ast b))=\pi((A\otimes (A\ast b))\star (a\otimes A))=\pi( A\otimes(A\ast b))\ast  \pi(a\otimes A)\\
&=\pi( (A\ast b)\otimes A)\ast \pi(a\otimes A)=\pi(((A\ast b)\ast a)\otimes A)=(\pi\circ j_{\mathds{L}(A)})(A\ast (b\ast a)).
\end{align*}
Now we use the fact that every tensor is a join of an appropriate family of elementary tensors and conclude from the previous relation that $\mathds{L}(\Omega_q(A))\subseteq (\pi\circ j_{\mathds{L}(A)})(\mathds{L}(A))$ holds. 
Hence, $\mbox{$\pi\circ j_{\mathds{L}(A)}$}$ can be viewed as an isomorphism onto $\mathds{L}(\Omega_q(A))$ in the sense of $\cat{Quant}$ and $\cat{BSQuant}$, respectively.
 In particular, $\mathds{L}(\Omega_q)$ satisfies condition \ref{A}.

More precisely, this situation can be expressed by the following commutative diagram:
\[\begin{tikzcd}[column sep=29pt,row sep=14pt
   ]
&\mathds{L}(A) \arrow[hookrightarrow]{r}{\iota_{\mathds{L}(A)}}&
\mathds{M}axA \arrow[hookleftarrow]{r}{\iota_{\mathds{R}(A)}}&\mathds{R}(A)
\\
&\mathds{L}(\Omega_q(A))\arrow[u,"\widehat{h}_{\mathds{L}}"]  \arrow[hookrightarrow]{r}{\iota_{\mathds{L}(\Omega_q(A))}}&
\Omega_q(A) \arrow[u,swap,"\widehat{h}"] \arrow[hookleftarrow]{r}{\iota_{\mathds{R}(\Omega_q(A))}}&\Omega_q(A)\arrow[u,swap,"\widehat{h}_{\mathds{R}}"] 
\\
\mathds{I}(A) \arrow[hookrightarrow]{r}{q_{\mathds{L}(A)}}&\mathds{L}(A) \arrow{r}{j_{\mathds{L}(A)}}\arrow[u,"\pi_{\mathds{L}}"]&\mathds{L}(A)\otimes \mathds{R}(A) \arrow[u,swap,"\pi"]\arrow[leftarrow]{r}{j_{\mathds{R}(A)}}&\mathds{R}(A) \arrow[u,swap,"\pi_{\mathds{R}}"]\arrow[hookleftarrow]{r}{q_{\mathds{R}(A)}}&\mathds{I}(A) 
\\
\mathds{I}(\Omega_q(A)) \arrow[hookrightarrow]{r}{q_{\mathds{L}(\Omega_q(A))}}\arrow[u,"\widehat{h}_{\mathds{I}}"]&\mathds{L}(\Omega_q(A)) \arrow{r}{j_{\mathds{L}(\Omega_q(A))}}\arrow[u,"\widehat{h}_{\mathds{L}}"]&\mathds{L}(\Omega_q(A))\otimes \mathds{R}(\Omega_q(A)) \arrow[u,swap,"{\widehat{h}_{\mathds{L}}\otimes \widehat{h}_{\mathds{R}}}"]\arrow[leftarrow]{r}{j_{\mathds{R}(\Omega_q(A))}}&\mathds{R}(\Omega_q(A)) \arrow[u,swap,"\widehat{h}_{\mathds{R}}"]\arrow[hookleftarrow]{r}{q_{\mathds{R}(\Omega_q(A))}}&\mathds{I}(\Omega_q(A)) \arrow[u,swap,"\widehat{h}_{\mathds{I}}"]
\end{tikzcd}\]
where $\pi_{\mathds{L}}(\alpha)=\pi(\alpha\otimes \top)$ for $\alpha\in \mathds{L}(\alg{Q})$, $\widehat{h}_{\mathds{L}}$ is the inverse map of $\pi_{\mathds{L}}$ --- i.e. $\widehat{h}_{\mathds{L}}(\varkappa)=\alpha$ if and only if $\varkappa=\pi(\alpha \otimes \top)$ for $\varkappa \in \mathds{L}(\Omega_q(A))$ and  $\alpha\in \mathds{L}(\alg{Q})$, $\widehat{h}_{\mathds{R}}$ is the inverse map of $\pi_{\mathds{R}}$ --- i.e. $\widehat{h}_{\mathds{R}}(\varkappa)=\beta$ if and only if $\varkappa=\pi(\top \otimes \beta)$ for $\varkappa \in \mathds{R}(\Omega_q(A))$ and  $\beta\in \mathds{R}(\alg{Q})$, and $\widehat{h}_{\mathds{I}}(\delta)=\gamma$ if and only if $\delta=\pi(\gamma\otimes \top)=\pi(\top\otimes \gamma)$ for $\delta\in \mathds{I}(\Omega_q(A))$ and $\gamma\in \mathds{I}(A)$. Obviously, $\widehat{h}_{\mathds{I}}$ is a quantale isomorphism.

Now we conclude from the commutativity of the diagram that  $\mathds{L}(A)\otimes \mathds{R}(A)\xrightarrow{\,\pi\,} \Omega_q(A)$ is  the coequalizer of $\mathds{I}(A) \xleftleftarrows[\, j_{\mathds{R}(A)}\circ q_{\mathds{R}(A)}\,]{\,j_{\mathds{L}(A)}\circ q_{\mathds{L}(A)}\,} \mathds{L}(A)\otimes \mathds{R}(A)$ if and only if $\pi\circ (\widehat{h}_{\mathds{L}}\otimes \widehat{h}_{\mathds{R}})$ is the coequalizer of $\mathds{I}(\Omega_q(A)) \xleftleftarrows[\, j_{\mathds{R}(\Omega_q(A))}\circ q_{\mathds{R}(\Omega_q(A))}\,]{\,j_{\mathds{L}(\Omega_q(A))}\circ q_{\mathds{L}(\Omega_q(A))}\,} \mathds{L}(\Omega_q(A))\otimes \mathds{R}(\Omega_q(A))$. Since $\pi\circ (\widehat{h}_{\mathds{L}}\otimes \widehat{h}_{\mathds{R}})\circ j_{\mathds{L}(\Omega_q(A))}=\iota_{\mathds{L}(\Omega_q(A))}$ and $\pi\circ (\widehat{h}_{\mathds{R}}\otimes \widehat{h}_{\mathds{R}})\circ j_{\mathds{R}(\Omega_q(A))}=\iota_{\mathds{R}(\Omega_q(A))}$, the relation $\iota_{\mathds{L}(\Omega_q(A))}\sqcup \iota_{\mathds{R}(\Omega_q(A))}=\pi\circ (\widehat{h}_{\mathds{L}}\otimes \widehat{h}_{\mathds{R}})$ follows. Now we refer to the previous equivalence and conclude that $\iota_{\mathds{L}(\Omega_q(A))}\sqcup \iota_{\mathds{R}(\Omega_q(A))}$ is the coequalizer of $\mathds{I}(\Omega_q(A)) \xleftleftarrows[\, j_{\mathds{R}(\Omega_q(A))}\circ q_{\mathds{R}(\Omega_q(A))}\,]{\,j_{\mathds{L}(\Omega_q(A))}\circ q_{\mathds{L}(\Omega_q(A))}\,} \mathds{L}(\Omega_q(A))\otimes \mathds{R}(\Omega_q(A))$. Hence the spectrum $\Omega_q(A)$ satisfies  condition \ref{C}. Since condition \ref{B} is obvious, we state the following important
\begin{fact*} The spectrum $\Omega_q(A)$ of a unital \donotbreakdash{$C^*$}algebra $A$ is always a quantic frame. In this context, the commutative quantale $\mathds{I}(\Omega_q(A))$ is considered as the {\sf commutative part} of  $\Omega_q(A)$.
\end{fact*}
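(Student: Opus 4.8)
The plan is to verify directly that $\Omega_q(A)$ satisfies the definition of a quantic frame, assembling the properties established in the discussion surrounding Example~\ref{newexample2}. Thus I must check that $\Omega_q(A)$ is involutive, bisymmetric and semi-unital, and that it satisfies the three additional axioms \ref{A}, \ref{B} and \ref{C}.

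First I would record the structural properties. The domain of the defining coequalizer $\mathds{L}(A)\otimes\mathds{R}(A)\xrightarrow{\,\pi\,}\Omega_q(A)$ is exactly the tensorially involutive quantale associated with the semi-unital quantale $\mathds{M}axA$, so by Remark~\ref{newremark1} it is semi-unital, semi-integral, pre-idempotent and bisymmetric. Since $\mathds{L}(A)\otimes\mathds{R}(A)$ is the coproduct of $\mathds{L}(A)$ and $\mathds{R}(A)$ in $\cat{BSQuant}$, the defining $\cat{Quant}$-coequalizer is also the coequalizer in $\cat{BSQuant}$; hence $\Omega_q(A)$ is an object of $\cat{BSQuant}$ and $\pi$ is a surjective \emph{strong} homomorphism, so all four properties descend to $\Omega_q(A)$. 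That $\Omega_q(A)$ carries an anti-homomorphic join-preserving involution $\ell_{\mathds{V}}$ turning $\pi$ into an involutive quantale homomorphism follows from Theorem~\ref{newtheorem2} together with Proposition~\ref{newproposition2}: since closed two-sided ideals of $A$ are self-adjoint, the inclusions satisfy $\vartheta_{\mathds{L}(A)}\circ q_{\mathds{L}(A)}=q_{\mathds{R}(A)}$ and $\vartheta_{\mathds{R}(A)}\circ q_{\mathds{R}(A)}=q_{\mathds{L}(A)}$, so the two coequalizers appearing in Theorem~\ref{newtheorem2} are isomorphic, the nucleus associated with $\pi$ is involutive, and Proposition~\ref{newproposition2} produces $\ell_{\mathds{V}}$.

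Next I would check the three extra axioms. For \ref{A}: the diagram chase preceding the statement exhibits $\pi\circ j_{\mathds{L}(A)}$ as a quantale isomorphism of $\mathds{L}(A)$ onto $\mathds{L}(\Omega_q(A))$, and symmetrically $\pi\circ j_{\mathds{R}(A)}$ as an isomorphism of $\mathds{R}(A)$ onto $\mathds{R}(\Omega_q(A))$; since the ideal multiplication on $\mathds{L}(A)$ and on $\mathds{R}(A)$ is idempotent (Example~\ref{newexample2}, via strong spatiality and Proposition~\ref{newproposition1}\,(iii)), every left-sided and every right-sided element of $\Omega_q(A)$ is idempotent. For \ref{B}: the involution on $\mathds{I}(A)$ is the identity because closed two-sided ideals of a $C^*$-algebra are self-adjoint, and transporting this along the involutive isomorphism $\widehat{h}_{\mathds{I}}\colon\mathds{I}(\Omega_q(A))\to\mathds{I}(A)$ shows every two-sided element of $\Omega_q(A)$ is hermitian. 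For \ref{C}: the commutative diagram of isomorphisms $\widehat{h}_{\mathds{L}}$, $\widehat{h}_{\mathds{R}}$, $\widehat{h}_{\mathds{L}}\otimes\widehat{h}_{\mathds{R}}$ and $\widehat{h}_{\mathds{I}}$ intertwining the two lower rows shows that $\pi$ is the coequalizer of the pair $j_{\mathds{L}(A)}\circ q_{\mathds{L}(A)},\,j_{\mathds{R}(A)}\circ q_{\mathds{R}(A)}$ over $\mathds{I}(A)$ if and only if $\iota_{\mathds{L}(\Omega_q(A))}\sqcup\iota_{\mathds{R}(\Omega_q(A))}$ is the coequalizer of the corresponding pair over $\mathds{I}(\Omega_q(A))$; combining this with the translation between ``pushout square in $\cat{BSQuant}$'' and ``coequalizer'' recalled just before Remark~\ref{newremark4} yields that the square~(\ref{neweq2.9}) for $\Omega_q(A)$ is a pushout in $\cat{BSQuant}$.

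The only mildly delicate point is the descent of semi-unitality (and of semi-integrality, pre-idempotency and bisymmetry) along $\pi$, which rests on $\pi$ being strong; once that is noted, and once the four intertwining isomorphisms together with the triviality of the involution on $\mathds{I}(A)$ are in hand, axioms \ref{A}--\ref{C} all follow by transport of structure. I expect the main obstacle, such as it is, to be condition \ref{C}: carrying out the pushout-versus-coequalizer translation rigorously. Since this has essentially been done in the paragraphs immediately preceding the statement, the proof of the Fact reduces to collecting these observations, after which the interpretation of $\mathds{I}(\Omega_q(A))\cong\mathds{I}(A)$ as the commutative part of $\Omega_q(A)$ is just a matter of terminology.
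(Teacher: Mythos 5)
Your proposal is correct and follows essentially the same route as the paper: the paper's justification of this Fact is precisely the discussion preceding it (the structural properties inherited from the tensorially involutive quantale of $\mathds{M}axA$ via the strong surjection $\pi$, the involution obtained from Theorem~\ref{newtheorem2} and Proposition~\ref{newproposition2} using self-adjointness of closed two-sided ideals, condition \ref{A} via the isomorphism $\pi\circ j_{\mathds{L}(A)}\colon\mathds{L}(A)\to\mathds{L}(\Omega_q(A))$, and condition \ref{C} via the intertwining isomorphisms and the coequalizer--pushout translation), and you have reassembled exactly these ingredients.
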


\section{Quantale-enriched involutive topological spaces}
\label{sec3:}

To begin, we revisit the concept of quantale-enriched topological spaces (cf.\ \cite{GHK21}). Let $\alg{Q}=(\alg{Q},m,e)$ be a unital quantale, and consider $\alg{Q}$ as a right (left) \donotbreakdash{$\alg{Q}$}module with respect to the multiplication $m$. Obviously, a right (left) \donotbreakdash{$\alg{Q}$}submodule of $\alg{Q}$ is a subquantale of $\alg{Q}$.

Let $X$ be a set. The free right \donotbreakdash{$\alg{Q}$}module on $X$ is the complete lattice $\alg{Q}^X$, 
ordered pointwise and equipped with the right action $\boxdot$ determined by the right quantale multiplication 
--- i.e.\ $ (f\boxdot \alpha)(x)= f(x)\ast \alpha$ for each $ \alpha\in \alg{Q}$, $x\in X$ and $ f\in \alg{Q}^X$  (cf.\ \cite{JoyalTierney}). Since every set $X$ can be viewed as a discrete \donotbreakdash{$\alg{Q}$}enriched category, a
  \donotbreakdash{$\alg{Q}$}valued map $f\in \alg{Q}^X$ is also referred to as a \emph{\donotbreakdash{$\alg{Q}$}presheaf on $X$}.

A complete sublattice $\mathcal{T}\subseteq\alg{Q}^X$ is a \donotbreakdash{$\alg{Q}$}submodule of $\alg{Q}^X$ if the inclusion map is a right \donotbreakdash{$\alg{Q}$}module homomorphism  --- i.e.\ $\mathcal{T}\xhookrightarrow{\,\,\,} \alg{Q}^X$ is join-preserving and  $f\ast \alpha\in \mathcal{T}$ for all $f\in \mathcal{T}$ and $\alpha\in \alg{Q}$. A constant \donotbreakdash{$\alg{Q}$}presheaf on $X$ that takes the value $\alpha$ will always be denoted by $\underline{\alpha}$.

Further, let $\diamond$ be a quasi-magma on $\alg{Q}$ 
(cf.\ \cite[Def.~1]{GHK21}), which is a binary operation in the sense of the category of preordered sets satisfying:
\begin{equation*}
\alpha\ast (\beta\diamond\gamma)\le (\alpha\ast \beta)\diamond \gamma \quad \text{and}\quad (\beta\diamond \gamma)\ast \alpha \le \beta\diamond(\gamma\ast \alpha),\quad \alpha,\beta,\gamma\in \alg{Q}.
\end{equation*}
Obviously, the quantale  multiplication is always a quasi-magma on $\alg{Q}$. 

A \donotbreakdash{$\alg{Q}$}submodule $\mathcal{T}$ of $\alg{Q}^X$ in the sense of $\cat{Sup}$ is a \emph{\donotbreakdash{$\alg{Q}$}enriched topology} on $X$ if $\mathcal{T}$ satisfies:
 \begin{enumerate}[label=\textup{(T\arabic*)},leftmargin=28pt,topsep=3pt,itemsep=3pt]
\item \label{T1} $\underline{\top}$ is an element of $\mathcal{T}$.
\item \label{T2} If $f_1,f_2\in \mathcal{T}$ then $f_1\diamond f_2\in \mathcal{T}$, where $f_1\diamond f_2$ is defined pointwise.
\end{enumerate}
Elements of a \donotbreakdash{$\alg{Q}$}enriched topology $\mathcal{T}$ on $X$ are called \emph{open \donotbreakdash{$\alg{Q}$}presheaves}. The trivial \donotbreakdash{$\alg{Q}$}enriched topology on $X$ coincides with the \donotbreakdash{$\alg{Q}$}submodule of all constant \donotbreakdash{$\alg{Q}$}presheaves attaining all left-sided elements of $\alg{Q}$. 

A \emph{\donotbreakdash{$\alg{Q}$}enriched topological space} (\donotbreakdash{$\alg{Q}$}topological space for short) is a pair $(X,\mathcal{T})$ such that $X$ is a set and $\mathcal{T}$ is a \donotbreakdash{$\alg{Q}$}enriched topology on $X$. A \donotbreakdash{$\alg{Q}$}topological space $(Z,\mathcal{T}_Z)$ is a \emph{\donotbreakdash{$\alg{Q}$}topological subspace} of $(X,\mathcal{T})$, if there exists an injective map $Z\xrightarrow{\,\varphi\,} X$ such that $\mathcal{T}_Z=\set{f\circ \varphi\mid f\in \mathcal{T}}$
 --- i.e.\ $\mathcal{T}_Z$ is the initial structure on $Z$ with respect to $\varphi$ and $(X,\mathcal{T})$. 
Sometimes $\mathcal{T}_Z$ is referred to as the \emph{relative  \donotbreakdash{$\alg{Q}$}enriched topology} on $Z$ induced by $\mathcal{T}$ and $\varphi$.
In many instances, $Z$ is a subset of $X$ and $\varphi$ is the inclusion map $Z\xhookrightarrow{\,\,\,} X$.

Next, we recall the lower separation axioms for \donotbreakdash{$\alg{Q}$}topological spaces. A \donotbreakdash{$\alg{Q}$}topological space is classified as follows (for more details we refer to \cite{Arrieta,GHK21}): 
 \begin{enumerate}[label=\textup{--},leftmargin=12pt,topsep=3pt,itemsep=3pt,topsep=3pt,itemsep=3pt
]
\item \emph{Kolmogoroff separated} (\donotbreakdash{$\mbox{\rm T}_0$}space) For each $x,y\in X$ with $x\neq y$ there exists $f\in \mathcal{T}$ such that $f(x)\neq f(y)$
\item \emph{Fr\'echet separated} (\donotbreakdash{$\mbox{\rm T}_1$}space) For each $x,y\in X$ with $x\neq y$ there exist $f_1,f_2\in \mathcal{T}$ such that $f_1(x)\not\le f_1(y)$ and $f_2 (y) \not\le f_2(x)$.
\item \emph{Hausdorff separated} (\donotbreakdash{$\mbox{\rm T}_2$}space) For each $x,y\in X$ with $x\neq y$ there exist $f_1,f_2\in \mathcal{T}$ such that
\[f_1(x)\diamond f_2(y)\not\le \tbigvee\limits_{z\in X} f_1(z)\diamond f_2(z)\quad \text{or}\quad f_2(y)\diamond f_1(x)\not\le \tbigvee\limits_{z\in X} f_2(z)\diamond f_1(z).\]
\end{enumerate}

After these general remarks on \donotbreakdash{$\alg{Q}$}topological spaces, we now additionally consider $\alg{Q}=(\alg{Q},m,e,\ell)$ as an involutive quantale. It is natural to incorporate the involution $\ell$ into the axioms of \donotbreakdash{$\alg{Q}$}enriched topologies. To do so, we require that the quasi-magma $\diamond$ on  $\alg{Q}$ is \emph{involutive}, meaning:
\begin{equation*}
(\alpha\diamond \beta)^{\prime}= \beta^{\prime} \diamond  \alpha^{\prime},\qquad \alpha,\beta\in \alg{Q}.
\end{equation*}
A \donotbreakdash{$\alg{Q}$}enriched topology $\mathcal{T}$ on $X$ is \emph{involutive} if it satisfies the following additional condition:
 \begin{enumerate}[label=\textup{(T\arabic*)},leftmargin=28pt,topsep=3pt,itemsep=3pt,start=3]
 \item \label{T3} If $f\in \mathcal{T}$ then $f^{\prime}\in \mathcal{T}$.
\end{enumerate}
Consequently, $(X,\mathcal{T})$ is called an involutive \emph{\donotbreakdash{$\alg{Q}$}enriched topological space}.
   
The axiom \ref{T3} implies that involutive \donotbreakdash{$\alg{Q}$}enriched topologies on $X$ are \emph{\donotbreakdash{$\alg{Q}$}sub-bimodules} of the \donotbreakdash{$\alg{Q}$}bimodule $\alg{Q}^X$, where the left action is determined by the left quantale multiplication. In this context, it is important to note that the subquantale $\mathds{L}(\mathcal{T})$ of all \emph{left-sided open} \donotbreakdash{$\alg{Q}$}presheaves of an involutive \donotbreakdash{$\alg{Q}$}enriched  topology $\mathcal{T}$ is always a \donotbreakdash{$\alg{Q}$}enriched topology, but not involutive.

Let $\alg{S}$ be an involutive subquantale of $\alg{Q}$, and suppose $\alg{S}$ is also a right \donotbreakdash{$\alg{Q}$}submodule of $\alg{Q}$. An involutive \donotbreakdash{$\alg{Q}$}enriched topological space $(X,\mathcal{T})$ is called an \emph{involutive \donotbreakdash{$\alg{S}$}sober space} if $(X,\mathcal{T})$ is a \donotbreakdash{$\mbox{\textup{T}}_0$}space and every strong, involutive quantale homomorphism $\mathcal{T}\xrightarrow{\,K\,} \alg{S}$, which is also a right \donotbreakdash{$\alg{Q}$}module homo\-morphism, is induced by an element $p_0\in X$ --- this means that  $K(f)=f(p_0)$ holds for all $f\in \mathcal{T}$.

Let $\mathcal{T}$ be a \donotbreakdash{$\alg{Q}$}topology on a set $X$. A subset $\mathcal{B}\subseteq\mathcal{T}$ is called a \emph{base} of $\mathcal{T}$ if every open \donotbreakdash{$\alg{Q}$}presheaf in $\mathcal{T}$ can be expressed as a join of elements from $\mathcal{B}$.
A subset $\mathcal{S}$ of $\alg{Q}^X$ is called a \emph{subbase} of an involutive \donotbreakdash{$\alg{Q}$}enriched topology $\mathcal{T}$ on $X$ if 
\[\mathcal{T}=\tbigcap\set{\mathcal{W}\mid\mathcal{S}\subseteq \mathcal{W}\text{ and } \mathcal{W}\text{ is an involutive \donotbreakdash{$\alg{Q}$}enriched topology on $X$}}.\]
Obviously, a subbase $\mathcal{S}$ of an involutive \donotbreakdash{$\alg{Q}$}enriched  topology $\mathcal{T}$ is not necessarily a subbase of $\mathcal{T}$ in the sense of general \donotbreakdash{$\alg{Q}$}enriched topologies. 
There exist simple counterexamples in this respect.
%

A characterization of \donotbreakdash{$\alg{Q}$}interior operators (cf.\ \cite{GHK21}) in the       context of involutive \donotbreakdash{$\alg{Q}$}enriched topologies can be given as follows.

\begin{lemma}\label{newlemma3} Let $\mathcal{T}$ be a \donotbreakdash{$\alg{Q}$}enriched topology on $X$ and $\mathcal{I}$ 
be the corresponding \donotbreakdash{$\alg{Q}$}interior operator. Then  $\mathcal{T}$ is involutive if and only if $\mathcal{I}$ satisfies the following condition\textup:
\begin{enumerate}[label=\textup{(I\arabic*)},leftmargin=28pt,topsep=3pt,itemsep=3pt,start=3]
\item $\mathcal{I}(f)^{\prime}\le \mathcal{I}(f^{\prime}),\qquad f\in \alg{Q}^X$.
\end{enumerate}
\end{lemma}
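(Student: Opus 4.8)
The plan is to work through the standard correspondence between $\alg{Q}$\nobreakdash-enriched topologies and $\alg{Q}$\nobreakdash-interior operators from \cite{GHK21}. Recall that the operator $\mathcal{I}$ associated with $\mathcal{T}$ is the deflationary, isotone, idempotent map on $\alg{Q}^X$ given by $\mathcal{I}(f)=\tbigvee\set{g\in\mathcal{T}\mid g\le f}$, that $\mathcal{I}(f)$ is itself an open $\alg{Q}$\nobreakdash-presheaf (since $\mathcal{T}$ is closed under joins), and that a $\alg{Q}$\nobreakdash-presheaf $f$ belongs to $\mathcal{T}$ precisely when $\mathcal{I}(f)=f$. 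I would also record at the outset the elementary observation that the involution $\ell$ on $\alg{Q}$, being join-preserving, is isotone; hence the pointwise involution $f\mapsto f^{\prime}$ on $\alg{Q}^X$ is isotone as well, so $g\le h$ implies $g^{\prime}\le h^{\prime}$.

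For necessity, I would assume $\mathcal{T}$ is involutive and fix an arbitrary $f\in\alg{Q}^X$. Then $\mathcal{I}(f)\in\mathcal{T}$ and $\mathcal{I}(f)\le f$; applying \ref{T3} gives $\mathcal{I}(f)^{\prime}\in\mathcal{T}$, and isotonicity of $f\mapsto f^{\prime}$ gives $\mathcal{I}(f)^{\prime}\le f^{\prime}$. Thus $\mathcal{I}(f)^{\prime}$ is an open $\alg{Q}$\nobreakdash-presheaf lying below $f^{\prime}$, hence it is dominated by the largest such presheaf, namely $\mathcal{I}(f^{\prime})$. This is exactly \textup{(I3)}. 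For sufficiency, I would assume $\mathcal{I}$ satisfies \textup{(I3)} and take $f\in\mathcal{T}$, so that $\mathcal{I}(f)=f$. Since $\mathcal{I}$ is deflationary, $\mathcal{I}(f^{\prime})\le f^{\prime}$; conversely, \textup{(I3)} applied to $f$ yields $f^{\prime}=\mathcal{I}(f)^{\prime}\le\mathcal{I}(f^{\prime})$. Therefore $\mathcal{I}(f^{\prime})=f^{\prime}$, i.e.\ $f^{\prime}\in\mathcal{T}$, which is precisely \ref{T3}, so $\mathcal{T}$ is involutive.

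I do not expect any genuine obstacle here: once the defining properties of the associated interior operator and the monotonicity of $\ell$ are in place, both implications are two-line computations. The only point requiring a little care is to invoke the correct form of the $\mathcal{T}$\nobreakdash-versus\nobreakdash-$\mathcal{I}$ dictionary from \cite{GHK21} — in particular that $\mathcal{I}(f)\in\mathcal{T}$ and that $f\in\mathcal{T}\iff\mathcal{I}(f)=f$ — and to note that in the sufficiency direction only the instances of \textup{(I3)} with $f\in\mathcal{T}$ are needed, while the necessity direction in fact delivers \textup{(I3)} for every $f\in\alg{Q}^X$.
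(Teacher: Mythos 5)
Your proof is correct and follows essentially the same route as the paper: the necessity direction is the paper's chain $\mathcal{I}(f)^{\prime}\le\mathcal{I}(\mathcal{I}(f)^{\prime})\le\mathcal{I}(f^{\prime})$ phrased via the "largest open presheaf below $f^{\prime}$" characterization, and the sufficiency direction (which the paper dismisses as immediate) is filled in exactly as one would expect.
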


\begin{proof} The sufficiency is immediate. Conversely, if $\mathcal{T}$ is an involutive \donotbreakdash{$\alg{Q}$}enriched topology on $X$, then $I(f)^{\prime}\in \mathcal{T}$ for all $f\in \alg{Q}$ and thus $I(f)^{\prime}\le I(I(f)^{\prime})\le I(f^{\prime})$.
\end{proof}

The previous condition is obviously equivalent to \begin{enumerate}[label=\textup{(I\arabic*)},leftmargin=28pt,topsep=3pt,itemsep=3pt,start=3]
\item[(I3$^\prime$)] $\mathcal{I}(f)^{\prime}= \mathcal{I}(f^{\prime}),\qquad f\in \alg{Q}^X$.
\end{enumerate}
Consequently, if the quasi-magma $\diamond$ coincides with the quantale multiplication, then Lemma~\ref{newlemma3} states that the \donotbreakdash{$\alg{Q}$}interior operator of any involutive \donotbreakdash{$\alg{Q}$}enriched topology on $X$ is always an involutive conucleus on $\alg{Q}^X$ with respect to the pointwise defined quantale multiplication.  

\section{The quantization of $\boldsymbol{2}$ and quantized topological spaces}
\label{sec:4}

Quantized topological spaces naturally emerge as a distinct subclass of quantale-enriched topological spaces through the topologization of quantales. This construction is fundamentally based on the quantization of $\boldsymbol{2}$.

  Since the concept of open \donotbreakdash{$\alg{Q}$}presheaves requires a \emph{unital} quantale $\alg{Q}$ and the quantization $\alg{Q}_{\boldsymbol{2}}$ of $\boldsymbol{2}$ is non-unital, we are interested in unital and non-integral quantales $\alg{Q}$ such that $\alg{Q}_{\boldsymbol{2}}$ can be embedded as a maximal strong subquantale of $\alg{Q}$. This approach leads to the concept of strictly quantized, unital quantales. 

A unital quantale $\widetilde{\alg{Q}_2}$ is \emph{quantized}, if $\alg{Q}_{\boldsymbol{2}}$ is a subquantale of $\widetilde{\alg{Q}_2}$ and the inclusion map is a strong quantale homomorphism. It is \emph{strictly quantized} if, additionally, the unital subquantale generated by $\alg{Q}_{\boldsymbol{2}}$ coincides with $\widetilde{\alg{Q}_2}$ (cf.\ \cite[Def.~2.1 and 2.4]{GH24}).
Thus, a strictly quantized, unital quantale contains at most $11$ elements. Besides the unit $e$ and the elements of $\alg{Q}_{\boldsymbol{2}}$, the remaining elements of $\widetilde{\alg{Q}_2}$ are denoted by $\widetilde{b}:=b\vee e$, $\widetilde{a}_{\ell}:=a_{\ell} \vee e$, $\widetilde{a}_r:=a_r \vee e$ and $\widetilde{c}:=c\vee e$. Obviously $\widetilde{\alg{Q}_2}$ is always pre-idempotent, and the cardinality of $\widetilde{\alg{Q}_2}$ depends on the order-theoretical position of the unit (cf.\ \cite{GH24}). Up to an isomorphism there exist $6$ different strictly quantized, unital quantales (cf.\ \cite[Figures~2, 4 and 5]{GH24}).

Since the unit in a quantale is always hermitian, the involution on $\alg{Q}_{\boldsymbol{2}}$ extends  uniquely  to an involution on $\widetilde{\alg{Q}_2}$. Therefore, the embedding $\alg{Q}_{\boldsymbol{2}} \xhookrightarrow{\,\,\,}\widetilde{\alg{Q}_2}$ is always involutive, and $\alg{Q}_{\boldsymbol{2}}$ becomes an involutive subquantale of $\widetilde{\alg{Q}_2}$.
   
Furthermore, recall that every unital quantale can be regarded as a right \donotbreakdash{$\alg{Q}$}module via right quantale multiplication. In this context, $\alg{Q}_{\boldsymbol{2}}$ is a right \donotbreakdash{$\widetilde{\alg{Q}_2}$}submodule of $\widetilde{\alg{Q}_2}$. Since the elements $\widetilde{a_{\ell}}$ and $\widetilde{a_r}$ are always present in any strictly quantized unital quantale and the relations
\[ \alpha \ast \widetilde{b}=\alpha\ast \widetilde{a_{\ell}}=\alpha \quad \text{and} \quad \alpha \ast a_r=\alpha\ast \top\]
hold for all $\alpha\in \alg{Q}_{\boldsymbol{2}}$, the property of a subset of $\alg{Q}_{\boldsymbol{2}}$ being a \donotbreakdash{$\widetilde{\alg{Q}_2}$}submodule of $\alg{Q}_2$ does \emph{not} depend on the specific choice of strictly quantized, unital quantale. In fact, there exist  three \donotbreakdash{$\widetilde{\alg{Q}_2}$}submodules of $\alg{Q}_{\boldsymbol{2}}$ that  contain the universal upper bound $\top$ of $\alg{Q}_{\boldsymbol{2}}$:
\[\{\bot,a_{\ell},\top\}, \quad \{\bot,a_{\ell}, c,\top\} \quad \text{and} \quad \alg{Q}_{\boldsymbol{2}}.\]
All of these \donotbreakdash{$\widetilde{\alg{Q}_2}$}submodules are necessarily subquantales of $\alg{Q}_{\boldsymbol{2}}$, but only $\alg{Q}_{\boldsymbol{2}}$ itself is involutive.
 
Since we are not concerned with the cardinality of strictly quantized unital quantales, we will choose a strictly quantized unital quantale $\widetilde{\alg{Q}_2}$ and establish the following:

\begin{standingassumption*} In what follows, $\widetilde{\alg{Q}_2}$ will always denote a fixed strictly quantized unital quantale, and the quasi-magma $\diamond$ will always coincide with the quantale multiplication $\ast$.
\end{standingassumption*}

A \emph{quantized topological space} is a \donotbreakdash{$\widetilde{\alg{Q}_2}$}topological space $(X,\mathcal{T})$ satisfying the additional property that the range of every open \donotbreakdash{$\widetilde{\alg{Q}_2}$}presheaf is contained in $\alg{Q}_2$ --- i.e.\ $\mathcal{T}$ is a \donotbreakdash{$\widetilde{\alg{Q}_2}$}submodule of $\alg{Q}_{\boldsymbol{2}}^X$ (cf. \cite[Section~5]{GH24}). 
Since both the right action on $\mathcal{T}$  and the formation of joins in $\mathcal{T}$ are defined pointwise, it follows from the previous observations that the property of being a right \donotbreakdash{$\widetilde{\alg{Q}_{\boldsymbol{2}}}$}submodule of $\alg{Q}_{\boldsymbol{2}}^X$ is independent of the specific choice of strictly quantized, unital  quantales. For this reason, $\mathcal{T}$ is also referred to a \emph{quantized topology}. A quantized topological space $(X,\mathcal{T})$ is \emph{involutive} if $\mathcal{T}$ satisfies the axiom (\mbox{T}3).

A quantized topological space $(X,\mathcal{T})$ is \emph{strongly Hausdorff separated} if for each $x,y\in X$ with $x\neq y$ there exists a left-sided  $f_1\in \mathcal{T}$ and a right-sided $f_2\in \mathcal{T}$ such that
\[f_2(x)\star f_1(y) \not\le \tbigvee\limits_{z\in X} (f_2(z)\star f_1(z)).\]
We show that the {strong Hausdorff separation} has a \emph{convergent-theoretical} characterization.
 For this purpose we recall that an isotone map $\alg{Q}^X\xrightarrow{\,\omega\,} \alg{Q}$ is called a \donotbreakdash{$\alg{Q}$}enriched filter (\donotbreakdash{$\alg{Q}$}filter {for} short) (cf.\ \cite[Def.~3]{GHK21}), if $\omega$ satisfies the following conditions:
 \begin{enumerate}[label=\textup{(F\arabic*)},leftmargin=28pt,topsep=3pt,itemsep=3pt,start=0]
\item $\omega(f)\ast \alpha\le \omega(f\ast \alpha),\qquad \alpha\in \alg{Q},\,f\in \alg{Q}^X$,
\item $\omega(\underline{\top})=\top$,
\item $\omega(f_1) \ast \omega(f_2)\le \omega(f_1\ast f_2),\qquad f_1,f_2\in \alg{Q}^X$,
\item  $\omega(f)\le \tbigvee_{x\in X} f(x),\qquad f\in \alg{Q}^X$.
\end{enumerate}
An element $x\in X$ is a \emph{left} (resp. \emph{right}) \emph{limit} of $\omega$ if $\nu_x(f)\le \omega(f)$ for every left-sided (resp. right-sided) $f\in \alg{Q}^X$, where $(\nu_x)_{x\in X}$ denotes the \donotbreakdash{$\alg{Q}$}neighborhood system corresponding to $(X,\mathcal{T})$ (cf.\ \cite{GHK21}).
%
 
\pagebreak
\begin{theorem} Let $(X,\mathcal{T})$ be a quantized topological space.
 Then the following are equivalent\textup:
 \begin{enumerate}[label=\textup{(\roman*)},leftmargin=20pt,labelwidth=10pt,itemindent=0pt,labelsep=5pt,topsep=5pt,itemsep=3pt]
\item $(X,\mathcal{T})$ is strongly Hausdorff separated.
\item For each \donotbreakdash{$\widetilde{\alg{Q}_2}$}filter $\omega$ on $X$ and  each $x,y\in X$ such that $x$ is left limit and $y$ is right limit of $\omega$ the relation $x=y$ holds --- i.e.\ if $\omega$ has left a limit and a right limit, then they coincide.
\end{enumerate}
\end{theorem}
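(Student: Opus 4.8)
The plan is to prove the two implications separately, drawing on the $\widetilde{\alg{Q}_2}$-neighbourhood system $(\nu_x)_{x\in X}$ of $(X,\mathcal{T})$ --- recall that $\nu_x(f)=\mathcal{I}(f)(x)$, where $\mathcal{I}$ is the $\widetilde{\alg{Q}_2}$-interior operator of $\mathcal{T}$, so that $\nu_x(g)=g(x)$ for every open $g\in\mathcal{T}$ --- together with the facts that $\alg{Q}_{\boldsymbol{2}}$ is semi-unital and bisymmetric and that the left-sided (resp.\ right-sided) open $\widetilde{\alg{Q}_2}$-presheaves form the subtopology $\mathds{L}(\mathcal{T})$ (resp.\ $\mathds{R}(\mathcal{T})$), a subquantale of $\mathcal{T}$. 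Semi-unitality yields $\underline{\top}\star f=f$ for every left-sided $f\in\widetilde{\alg{Q}_2}^X$ and $f\star\underline{\top}=f$ for every right-sided one.

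For the implication (i)$\Rightarrow$(ii) I would argue by contradiction. Let $\omega$ be a $\widetilde{\alg{Q}_2}$-filter, let $x$ be a left limit and $y$ a right limit of $\omega$, and suppose $x\neq y$. Since $y\neq x$, the strong Hausdorff property applied to the ordered pair $(y,x)$ produces a left-sided $f_1\in\mathcal{T}$ and a right-sided $f_2\in\mathcal{T}$ with $f_2(y)\star f_1(x)\not\le\tbigvee_{z\in X}(f_2(z)\star f_1(z))$. Since $f_1$ is open and left-sided and $x$ is a left limit, $f_1(x)=\nu_x(f_1)\le\omega(f_1)$; symmetrically $f_2(y)=\nu_y(f_2)\le\omega(f_2)$. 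Then axioms (F2) and (F3) give
\[f_2(y)\star f_1(x)\le\omega(f_2)\star\omega(f_1)\le\omega(f_2\star f_1)\le\tbigvee_{z\in X}(f_2(z)\star f_1(z)),\]
a contradiction; hence $x=y$.

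For the implication (ii)$\Rightarrow$(i) I would prove the contrapositive. Assume $(X,\mathcal{T})$ is not strongly Hausdorff separated, and fix $x_0\neq y_0$ such that $f_2(x_0)\star f_1(y_0)\le\tbigvee_{z\in X}(f_2(z)\star f_1(z))$ holds for all $f_1\in\mathds{L}(\mathcal{T})$ and $f_2\in\mathds{R}(\mathcal{T})$. The goal is to construct a $\widetilde{\alg{Q}_2}$-filter $\omega$ having $y_0$ as a left limit and $x_0$ as a right limit, which contradicts (ii). I would set
\[\omega(f)=\tbigvee\set{g_2(x_0)\star g_1(y_0)\mid g_1\in\mathds{L}(\mathcal{T}),\ g_2\in\mathds{R}(\mathcal{T}),\ g_2\star g_1\le f},\qquad f\in\widetilde{\alg{Q}_2}^X.\]
A preliminary observation is that for left-sided $f$ the interior $\mathcal{I}(f)$ is again left-sided: $\underline{\top}\star\mathcal{I}(f)\in\mathcal{T}$ since $\mathcal{T}$ is closed under the multiplication, and $\underline{\top}\star\mathcal{I}(f)\le\underline{\top}\star f=f$, so $\underline{\top}\star\mathcal{I}(f)\le\mathcal{I}(f)$ by maximality of the interior, while $\mathcal{I}(f)\le\underline{\top}\star\mathcal{I}(f)$ by semi-unitality; dually for right-sided $f$. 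Hence, choosing $g_1=\mathcal{I}(f)$, $g_2=\underline{\top}$ for left-sided $f$ (and $g_1=\underline{\top}$, $g_2=\mathcal{I}(f)$ for right-sided $f$) yields $\omega(f)\ge\mathcal{I}(f)(y_0)=\nu_{y_0}(f)$ (resp.\ $\omega(f)\ge\nu_{x_0}(f)$), so $y_0$ is a left limit and $x_0$ a right limit of $\omega$.

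It then remains to check that $\omega$ is a $\widetilde{\alg{Q}_2}$-filter. Isotonicity is immediate; (F1) follows from the admissible pair $g_1=g_2=\underline{\top}$; (F3) is where the choice of $(x_0,y_0)$ is used, since $g_2(x_0)\star g_1(y_0)\le\tbigvee_z g_2(z)\star g_1(z)\le\tbigvee_z f(z)$; (F0) holds because $(g_2,\,g_1\boxdot\alpha)$ is admissible for $f\boxdot\alpha$ whenever $(g_2,g_1)$ is admissible for $f$; and (F2) follows by merging admissible pairs $(g_2,g_1)$ for $f$ and $(h_2,h_1)$ for $g$ into the pair $(g_2\star h_2,\,g_1\star h_1)$ for $f\star g$ --- here the bisymmetry of $\alg{Q}_{\boldsymbol{2}}$ is used both to see that the merged pair is admissible and to rewrite $g_2(x_0)\star g_1(y_0)\star h_2(x_0)\star h_1(y_0)=(g_2\star h_2)(x_0)\star(g_1\star h_1)(y_0)$. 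I expect this last verification --- (F2) in particular, the step that genuinely requires the bisymmetry of the quantization of $\boldsymbol{2}$ --- to be the main technical obstacle; the remaining points are routine manipulations with left- and right-sidedness and the identities $\underline{\top}\star f=f$, $f\star\underline{\top}=f$ noted above.
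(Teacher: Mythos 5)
Your proof is correct and follows essentially the same route as the paper: the forward implication is the identical three-step chain through (F2) and (F3), and for the converse you build the same filter $\omega(f)=\tbigvee\{g_2(x_0)\star g_1(y_0)\mid g_2\star g_1\le f\}$ over right-/left-sided open presheaves, verify (F0)--(F3) using the bisymmetry and semi-unitality of $\alg{Q}_{\boldsymbol{2}}$, and exhibit the two distinct limits. The only cosmetic difference is that you check the limit conditions via the observation that $\mathcal{I}(f)$ is left-sided for left-sided $f$, where the paper instead uses the pair $(\underline{\top},\,\underline{\top}\star f)$ for an arbitrary open $f\le h$; both arguments are sound.
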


\begin{proof} (i)$\implies$(ii): Let us assume that $x$ is a left limit of $\omega$    
and $y$ is a right limit of $\omega$. Then for every left-sided open \donotbreakdash{$\widetilde{\alg{Q}_2}$}presheaf $f_1$ and for every right-sided open \donotbreakdash{$\widetilde{\alg{Q}_2}$}presheaf $f_2$ we obtain:
\[f_2(y)\star f_1(x)=\nu_y(f_2)\star \nu_x(f_1)\le \omega(f_2)\star \omega(f_1)\le \omega(f_2\star f_1)\le \tbigvee\limits_{z\in X} (f_2(z)\star f_1(z)).\]
Hence the strong Hausdorff separation axiom implies $x=y$.\\[1mm]
(ii)$\implies$(i): Assume that $(X,\mathcal{T})$ is not strongly Hausdorff separated. Then there exists $x,y\in X$ with $x\neq y$ such that  for all left-sided open \donotbreakdash{$\widetilde{\alg{Q}_2}$}presheaves $f_1\in\mathcal{T}$ and for all right-sided open \donotbreakdash{$\widetilde{\alg{Q}_2}$}presheaves $f_2\in\mathcal{T}$, the following relation holds:
\stepcounter{num}
\begin{equation} \label{neweq4.1}
f_2(y)\star f_1(x)\le \tbigvee\limits_{z\in X} (f_2(z)\star f_1(z)).
\end{equation}
Now we construct a \donotbreakdash{$\widetilde{\alg{Q}_2}$}filter as follows:
\[\omega(h)=\tbigvee\set{g(y)\star f(x)\mid f,g \in \mathcal{T} \text{ with } \top\star f\le f,\,\, g\star \top\le g\text{ and }g\star f\le h},\qquad h\in \widetilde{\alg{Q}_2}^X.\]
Obviously, $\omega$ is isotone and satisfies axiom (F1). If $f$ is a left-sided open \donotbreakdash{$\widetilde{\alg{Q}_2}$}presheaf, then $f\star \alpha$ is also a left-sided open $\mbox{\donotbreakdash{$\widetilde{\alg{Q}_2}$}presheaf}$ for all $\alpha\in \alg{Q}$. Hence $\omega$ satisfies (F0). \\
To verify (F2), we proceed as follows. Let $h_1,h_2\in \widetilde{\alg{Q}_2}^X$ and $f_1,g_1,f_2,g_2\in \mathcal{T}$ such that $\top\star f_1\le f_1$, $g_1\star \top\le g_1$, $g_1\star f_1\le h_1$, $\top\star f_2\le f_2$, $g_2\star \top\le g_2$ and $g_2\star f_2\le h_2$. Since $(X,\mathcal{T})$ is \emph{quantized} and $\alg{Q}_{\boldsymbol{2}}$ is \emph{bisym\-me\-tric}, we have 
$f_1\star f_2,g_1\star g_2 \in \mathcal{T}$, $\top\star (f_1\star f_2)\le f_1\star f_2$, $(g_1\star g_2)\star \top\le g_1\star g_2$, $(g_1\star g_2)\star (f_1\star f_2)=(g_1\star f_1)\star (g_2\star f_2)\le h_1\star h_2$ and 
\[(g_1(y)\star f_1(x))\star (g_2(y)\star f_2(x))=(g_1\star g_2)(y)\star (f_1\star f_2)(x)\le \omega(h_1\star h_2).\]
Hence, $\omega(h_1)\star \omega(h_2)\le \omega(h_1\star h_2)$ follows. Finally, (F3) is a consequence of (\ref{neweq4.1}).\\
We show now that $x$ is left limit point of $\omega$. Let $h\in\widetilde{\alg{Q}_2}^X$ be a left-sided \donotbreakdash{$\widetilde{\alg{Q}_2}$}presheaf and $f\in \mathcal{T}$ such that $f\le h$. Then $\top\star f$ is a left-sided open \donotbreakdash{$\widetilde{\alg{Q}_2}$}presheaf, $\underline{\top}$ a right-sided open \donotbreakdash{$\widetilde{\alg{Q}_2}$}presheaf and $\underline{\top}\star(\top\star f)\le \top\star h\le h$. Hence, $\top\star f(x)=\underline{\top}(y)\star(\top\star f)(x)\le \omega(h)$. Since  $(X,\mathcal{T})$ is quantized and $\alg{Q}_{\boldsymbol{2}}$ is \emph{semi-unital}, we conclude that $f(x)\le \omega(h)$, and so
\begin{align*}\nu_x(h)&=\tbigvee\set{f(x)\mid f\in \mathcal{T}\text{ and } f\le h}\le \omega(h).
\end{align*}

Analogously, we show that $y$ is a right limit of $\omega$. Since $x\neq y$, we arrive at a contradiction of (ii). Hence, the assumption is false, and (i) holds.
\end{proof}

From the perspective of convergence theory the previous theorem underlines the importance of the strong Hausdorff separation axiom for quantized topological spaces.

\subsection{The topological characterization of strongly spatial quantales}
\label{subsec:4.1}
Consider an arbitrary quantale $\alg{Q}$ and its strong spectrum $\sigma_s(\alg{Q})$. We identify every strongly prime element $p$ of $\alg{Q}$ with its associated strong quantale homomorphism $\alg{Q} \xrightarrow{\,h_p\,} \alg{Q}_{\boldsymbol{2}}$ (cf.\ Proposition~\ref{newproposition3}). Consequently, every element $\alpha\in \alg{Q}$ induces a map $\sigma_s(\alg{Q})\xrightarrow{\, \mathds{A}_{\alpha}\,} \alg{Q}_{\boldsymbol{2}}$ defined by:
\stepcounter{num}
\begin{equation}
\label{neweq4.2} \mathds{A}_{\alpha}(p)=h_p(\alpha),\qquad p\in \sigma_s(\alg{Q}).
\end{equation}

We then consider the \donotbreakdash{$\widetilde{\alg{Q}_2}$}enriched topology generated by the subbase
 $\mathcal{S}=\set{\mathds{A}_{\alpha}\mid  \alpha \in{\alg{Q}}}$.
Since $\mathds{A}_{\alpha}\star a_{\ell}=\mathds{A}_{\alpha\ast\top}\star a_{\ell}$ for each 
$\alpha\in\alg{Q}$, we conclude from (\ref{neweq2.6}) and (\ref{neweq2.7}) that the quantized topology on $\sigma_{s(\alg{Q})}$ generated by $\mathcal{S}$ is given by
\begin{equation*}
\mathcal{T}_{\sigma_s(\alg{Q})}=\set{\mathds{A}_{\alpha} \vee (\mathds{A}_{\beta}\star a_{\ell})\mid  \alpha \in\alg{Q},\, \beta\in \mathds{R}(\alg{Q})}.
\end{equation*}

Further, we consider the tensor product $(\alg{Q}\otimes C_3, m\otimes m_{\ell})$ of the quantales $(\alg{Q},m)$ and $C_3^{\ell}$.  Then 
\[\mathcal{U}_{\alg{Q}}=\set{ (\alpha\otimes \top)\vee (\beta\otimes a)\mid \alpha\in \alg{Q},\,\beta\in \mathds{R}(\alg{Q}),\, \alpha \le \beta}\]
is a subquantale of  $(\alg{Q}\otimes C_3, m\otimes m_{\ell})$. In fact, if $(\alpha_1\otimes \top)\vee (\beta_1\otimes a),(\alpha_2\otimes \top)\vee (\beta_2\otimes a)\in\mathcal{U}_{\alg{Q}}$ then
$((\alpha_1\otimes \top)\vee (\beta_1\otimes a))\star ((\alpha_2\otimes \top)\vee (\beta_2\otimes a))=((\beta_1\ast \alpha_2)\otimes \top)\vee ((\beta_1\ast \beta_2)\otimes a)$.

\begin{proposition} {\rm(Cf.\ \cite[Prop.~5.5]{GH24})}\label{newproposition6} Let $\alg{Q}$ be a quantale and $\mathcal{T}_{\sigma_s(\alg{Q})}$ be its quantized topology on $\sigma_s(\alg{Q})$. 
Then $\alg{Q}$ is strongly spatial if and only if the quantales $\mathcal{T}_{\sigma_{s(\alg{Q})}}$ and $\mathcal{U}_{\alg{Q}}$ are isomorphic, and there exists an injective and strong quantale homomorphism
$\alg{Q}\xrightarrow{\,\,\,} \mathcal{U}_{\alg{Q}}$.
\end{proposition}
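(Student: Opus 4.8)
The plan is to prove the two implications separately, using throughout the canonical strong quantale homomorphism $\alg{Q}\xrightarrow{\,\mathds{A}\,}\alg{Q}_{\boldsymbol{2}}^{\sigma_s(\alg{Q})}$, $\alpha\mapsto\mathds{A}_\alpha$ (a strong quantale homomorphism for every $\alg{Q}$, since each $h_p$ is one), together with two auxiliary maps: $\mathcal{U}_{\alg{Q}}\xrightarrow{\,\Psi\,}\mathcal{T}_{\sigma_s(\alg{Q})}$, $(\alpha\otimes\top)\vee(\beta\otimes a)\mapsto\mathds{A}_\alpha\vee(\mathds{A}_\beta\star a_\ell)$, and $\alg{Q}\xrightarrow{\,e\,}\mathcal{U}_{\alg{Q}}$, $e(\alpha)=(\alpha\otimes\top)\vee\bigl((\alpha\ast\top)\otimes a\bigr)$. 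The map $\Psi$ is well defined because, via the two $\cat{Sup}$-maps $C_3\to\boldsymbol{2}$, the pair $(\alpha,\beta)$ with $\alpha\le\beta$ is recovered uniquely from $(\alpha\otimes\top)\vee(\beta\otimes a)$; the map $e$ takes values in $\mathcal{U}_{\alg{Q}}$ as soon as $\alg{Q}$ is semi-unital, since then $\alpha\le\alpha\ast\top\in\mathds{R}(\alg{Q})$; and $\Psi\circ e=\mathds{A}$, using $\mathds{A}_\alpha\star a_\ell\le\mathds{A}_\alpha$ pointwise in $\alg{Q}_{\boldsymbol{2}}$.

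For the forward implication I would argue as follows. Assume $\alg{Q}$ is strongly spatial; then it is semi-unital by Proposition~\ref{newproposition1}, and, being spatial, also semi-integral and balanced. First, $\mathds{A}$ is injective: if $\mathds{A}_\alpha\le\mathds{A}_\beta$ and $\beta\le p\in\sigma_s(\alg{Q})$, then $h_p(\alpha)\le h_p(\beta)\le c$, so $\alpha\le p$, whence $\alpha\le\tbigwedge\set{p\in\sigma_s(\alg{Q})\mid\beta\le p}=\beta$. Next, $\Psi$ is a strong quantale homomorphism: compatibility with the multiplications comes from the product formula for $\mathcal{U}_{\alg{Q}}$ recorded above, the pointwise multiplication on $\mathcal{T}_{\sigma_s(\alg{Q})}$, and the identities (\ref{neweq2.6}), (\ref{neweq2.7}) in $\alg{Q}_{\boldsymbol{2}}$ (which collapse $\mathds{A}_{\beta_1}\star a_\ell\star\mathds{A}_{\alpha_2}$ to $\mathds{A}_{\beta_1\ast\alpha_2}$ and $\mathds{A}_{\beta_1}\star a_\ell\star\mathds{A}_{\beta_2}\star a_\ell$ to $\mathds{A}_{\beta_1\ast\beta_2}\star a_\ell$), after which the constraint $\alpha_i\le\beta_i$ absorbs the remaining terms; $\Psi$ is surjective by definition of $\mathcal{T}_{\sigma_s(\alg{Q})}$, on replacing $\beta$ by $\beta\vee(\alpha\ast\top)$ so that $\alpha\le\beta$, and $\Psi(\top\otimes\top)=\underline{\top}$. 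It then remains to show $\Psi$ is injective: if $\Psi(\alpha_1,\beta_1)=\Psi(\alpha_2,\beta_2)$, applying the right action $\star a_r$ and simplifying via (\ref{neweq2.6}), (\ref{neweq2.7}) and $\beta_i\ast\top=\beta_i$ yields $\mathds{A}_{\beta_1}=\mathds{A}_{\beta_2}$, hence $\beta_1=\beta_2=:\beta$; and for each $p\in\sigma_s(\alg{Q})$ either $\beta\le p$ (so $\alpha_1,\alpha_2\le p$) or $\beta\not\le p$, in which case $h_p(\beta)\star a_\ell=a_\ell$ and the equality at $p$ reads $h_p(\alpha_1)\vee a_\ell=h_p(\alpha_2)\vee a_\ell$, forcing $h_p(\alpha_1)=\top\iff h_p(\alpha_2)=\top$ because in $\alg{Q}_{\boldsymbol{2}}$ one has $x\vee a_\ell=\top$ only for $x=\top$. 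Thus $\set{p\mid\alpha_1\le p}=\set{p\mid\alpha_2\le p}$, and strong spatiality gives $\alpha_1=\alpha_2$; so $\Psi$ is a quantale isomorphism. Finally $e$ is an injective strong quantale homomorphism: $e(\alpha_1\ast\alpha_2)=e(\alpha_1)\star e(\alpha_2)$ reduces, via the product formula for $\mathcal{U}_{\alg{Q}}$, to $\alpha_1\ast\top\ast\alpha_2=\alpha_1\ast\alpha_2$ (semi-integrality together with semi-unitality), $e(\top)=\top\otimes\top$, and the projection $\alg{Q}\otimes C_3\to\alg{Q}$ onto the $\top$-coordinate sends $e(\alpha)$ back to $\alpha$.

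For the converse, suppose $\mathcal{U}_{\alg{Q}}\cong\mathcal{T}_{\sigma_s(\alg{Q})}$ and there is an injective strong quantale homomorphism $\alg{Q}\xrightarrow{\,g_0\,}\mathcal{U}_{\alg{Q}}$. A quantale isomorphism is an order isomorphism, hence strong, so composition yields an injective strong quantale homomorphism $\alg{Q}\xrightarrow{\,\mathds{E}\,}\mathcal{T}_{\sigma_s(\alg{Q})}$. For each $x\in\sigma_s(\alg{Q})$, evaluation at $x$ is a strong quantale homomorphism $\mathcal{T}_{\sigma_s(\alg{Q})}\to\alg{Q}_{\boldsymbol{2}}$ (the multiplication on $\mathcal{T}_{\sigma_s(\alg{Q})}$ being pointwise), so $g_x\colon\alg{Q}\to\alg{Q}_{\boldsymbol{2}}$, $g_x(\alpha)=\mathds{E}(\alpha)(x)$, is one. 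Every prime $t$ of $\alg{Q}_{\boldsymbol{2}}$ is strongly prime ($\alg{Q}_{\boldsymbol{2}}$ being semi-unital), so Proposition~\ref{newproposition3}\,(ii) furnishes a strong quantale endomorphism $h_t$ of $\alg{Q}_{\boldsymbol{2}}$ with $h_t^{\vdash}(c)=t$; then $g_x^{\vdash}(t)=(h_t\circ g_x)^{\vdash}(c)$ is strongly prime in $\alg{Q}$ by Proposition~\ref{newproposition3}\,(i). If now $\alpha_1\not\le\alpha_2$ in $\alg{Q}$, order reflection of $\mathds{E}$ produces $x$ with $g_x(\alpha_1)\not\le g_x(\alpha_2)$; since $\alg{Q}_{\boldsymbol{2}}$ is spatial there is a prime $t$ of $\alg{Q}_{\boldsymbol{2}}$ with $g_x(\alpha_2)\le t$ and $g_x(\alpha_1)\not\le t$, and then $p:=g_x^{\vdash}(t)\in\sigma_s(\alg{Q})$ satisfies $\alpha_2\le p$ and $\alpha_1\not\le p$. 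Hence $\alpha=\tbigwedge\set{p\in\sigma_s(\alg{Q})\mid\alpha\le p}$ for all $\alpha\in\alg{Q}$, i.e.\ $\alg{Q}$ is strongly spatial.

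The hard part will be the injectivity of $\Psi$: the homomorphism and surjectivity assertions for $\Psi$ and $e$ are bookkeeping with (\ref{neweq2.6}), (\ref{neweq2.7}) and the multiplication tables of $\alg{Q}_{\boldsymbol{2}}$, but injectivity of $\Psi$ genuinely uses strong spatiality twice --- once through injectivity of $\mathds{A}$, and once to pass from ``$\alpha_1$ and $\alpha_2$ lie below the same strongly prime elements'' to ``$\alpha_1=\alpha_2$'' --- combined with the special feature $x\vee a_\ell=\top\Rightarrow x=\top$ of $\alg{Q}_{\boldsymbol{2}}$. On the converse side the subtlety is that evaluations land in $\alg{Q}_{\boldsymbol{2}}$, not in $\boldsymbol{2}$, so a single evaluation at $x$ detects a whole family $\set{g_x^{\vdash}(t)}$, with $t$ ranging over the four primes $\bot,a_\ell,a_r,c$ of $\alg{Q}_{\boldsymbol{2}}$, of strongly prime elements of $\alg{Q}$; that is what makes order reflection of $\mathds{E}$ strong enough to recover strong spatiality.
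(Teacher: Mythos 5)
Your argument is correct, and on the necessity side it uses exactly the paper's embedding $\varphi(\alpha)=(\alpha\otimes\top)\vee((\alpha\ast\top)\otimes a)$; the real difference is that you prove the isomorphism $\mathcal{U}_{\alg{Q}}\cong\mathcal{T}_{\sigma_s(\alg{Q})}$ from scratch via the explicit map $\Psi$, whereas the paper outsources this entirely to \cite[Prop.~5.5 and Rem.~5.6]{GH24}. Your verification of $\Psi$ is the genuinely new content here: the product formula in $\mathcal{U}_{\alg{Q}}$ matches the pointwise product in $\mathcal{T}_{\sigma_s(\alg{Q})}$ via (\ref{neweq2.6})--(\ref{neweq2.7}), and injectivity of $\Psi$ correctly isolates the two places where strong spatiality enters (order-reflection of $\mathds{A}$, and the separation of $\alpha_1,\alpha_2$ by strongly prime elements using the fact that $x\vee a_{\ell}=\top$ forces $x=\top$ in $\alg{Q}_{\boldsymbol{2}}$). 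On the sufficiency side the paper merely asserts that $\alg{Q}$ embeds as a strong subquantale of $\mathcal{T}_{\sigma_s(\alg{Q})}$ and that ``every quantized topology is a strongly spatial quantale''; your version makes the missing mechanism explicit --- composing the embedding with the evaluation homomorphisms $\mathcal{T}_{\sigma_s(\alg{Q})}\to\alg{Q}_{\boldsymbol{2}}$ and pulling back the four primes $\bot,a_{\ell},a_r,c$ of $\alg{Q}_{\boldsymbol{2}}$ through Proposition~\ref{newproposition3} to manufacture enough strongly prime elements of $\alg{Q}$. That is precisely the argument needed to justify the paper's one-line claim, so the two proofs agree in strategy while yours is self-contained. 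One small point worth stating explicitly in a write-up: the surjectivity of $\Psi$ uses $\alpha\le\alpha\ast\top$, i.e.\ semi-unitality of $\alg{Q}$, which you do have from Proposition~\ref{newproposition1} but which should be flagged since the description of $\mathcal{T}_{\sigma_s(\alg{Q})}$ itself is stated for arbitrary $\alg{Q}$.
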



\begin{proof} {\bf Necessity}:
Since $\alg{Q}$ is strongly spatial, $\alg{Q}$ is semi-integral (cf.\ Proposition~\ref{newproposition1}).  Hence, the map $\alg{Q}\xrightarrow{\, \varphi\,} \mathcal{U}_{\alg{Q}}$ defined by
\[\varphi(\alpha)= (\alpha\otimes \top)\vee ((\alpha \ast \top)\otimes a), \qquad \alpha \in \alg{Q},\]
is an injective and strong quantale homomorphism. The isomorphism between $\mathcal{T}_{\sigma_{s(\alg{Q})}}$ and $\mathcal{U}_{\alg{Q}}$ follows from \cite[Prop.~5.5 and Rem.~5.6]{GH24}.\\[1mm]
{\bf Sufficiency}
Since every injective and join-preserving map reflects the respective 
orders, we conclude that $\alg{Q}$ is isomorphic to a subquantale of $\mathcal{T}_{\sigma_{s(\alg{Q})}}$ containing the universal upper bound of $\mathcal{T}_{\sigma_{s(\alg{Q})}}$. Since every quantized topology is a strongly spatial quantale, the result follows.
\end{proof}

 In the following, we assume that $\alg{Q}$ is always an \emph{involutive} quantale.
 In this context, we restrict our interest to the hermitian spectrum $X=\sigma_h(\alg{Q})$ and identify every hermitian strong prime element $p$ with its associated involutive strong quantale homomorphism $\alg{Q}\xrightarrow{\, h_p\,} \alg{Q}_{\boldsymbol{2}}$ (cf.\ Proposition~\ref{newproposition3}). 
 
Since $(\mathds{A}_{\alpha})^{\prime}(p)=\mathds{A}_{\alpha^{\prime}}(p)$ for all $p\in  X$ (where $\mbox{}^{\prime}$ denotes the respective involutions), we refer again to (\ref{neweq2.6}) and (\ref{neweq2.7}) and conclude that the involutive \donotbreakdash{$\widetilde{\alg{Q}_2}$}enriched quantized topology on $X$ generated by $\mathcal{S}$ is given by:
\stepcounter{num}
\begin{equation}\label{neweq4.3}
\mathcal{T}_X=\set{\mathds{A}_{\alpha_1}\vee (\mathds{A}_{\alpha_2}\star a_{\ell}) \vee (a_r\star \mathds{A}_{\alpha_3})\vee (a_r\star \mathds{A}_{\alpha_4}\star a_{\ell})\mid\alpha_i\in \alg{Q},\, i=1,...,4}.
\end{equation}
It is easy to check that $(X,\mathcal{T}_X)$ is a \donotbreakdash{$\mbox{\textup{T}}_0$}space. Moreover, the strong quantale homomorphism $\alg{Q}\xrightarrow{\ \Phi\,} \mathcal{T}_X$ determined by $\Phi(\alpha)=\mathds{A}_{\alpha}$ for all $\alpha\in\alg{Q}$ is an involutive quantale homomorphism.

\begin{proposition}\label{newproposition7} Let $\alg{Q}$ be an involutive quantale and $X=\sigma_h(\alg{Q})$ be its hermitian spectrum. Then the involutive quantized topological space $(X,\mathcal{T}_X)$ given by \textup(\ref{neweq4.3}\textup) is 
an involutive \donotbreakdash{$\alg{Q}_{\boldsymbol{2}}$}sober space.
\end{proposition}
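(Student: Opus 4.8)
The plan is to exhibit, for every admissible $K$, a point $p_0\in X$ that induces it, and the construction of $p_0$ goes through the canonical homomorphism $\alg{Q}\xrightarrow{\,\Phi\,}\mathcal{T}_X$, $\Phi(\alpha)=\mathds{A}_\alpha$. We already know that $(X,\mathcal{T}_X)$ is a $\mathrm{T}_0$-space, so it only remains to show that every strong involutive quantale homomorphism $\mathcal{T}_X\xrightarrow{\,K\,}\alg{Q}_{\boldsymbol{2}}$ that is also a right $\widetilde{\alg{Q}_2}$-module homomorphism satisfies $K(f)=f(p_0)$ for all $f\in\mathcal{T}_X$, for a suitable $p_0\in X$. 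Given such a $K$, the composite $K\circ\Phi\colon\alg{Q}\to\alg{Q}_{\boldsymbol{2}}$ is again a strong involutive quantale homomorphism, since $\Phi$ is one. By Proposition~\ref{newproposition3}\,(i) the element $p_0:=(K\circ\Phi)^{\vdash}(c)$ is strongly prime in $\alg{Q}$, and by the last assertion of Proposition~\ref{newproposition3} it is hermitian because $K\circ\Phi$ is involutive; hence $p_0\in\sigma_h(\alg{Q})=X$. By the uniqueness in Proposition~\ref{newproposition3}\,(ii), $K\circ\Phi$ is exactly the homomorphism $h_{p_0}$ associated with $p_0$, so by (\ref{neweq4.2})
\[K(\mathds{A}_\alpha)=(K\circ\Phi)(\alpha)=h_{p_0}(\alpha)=\mathds{A}_\alpha(p_0)\qquad\text{for all }\alpha\in\alg{Q}.\]
Thus $K$ agrees with the evaluation map $\mathrm{ev}_{p_0}\colon\mathcal{T}_X\to\alg{Q}_{\boldsymbol{2}}$, $f\mapsto f(p_0)$, on the subbase $\mathcal{S}=\set{\mathds{A}_\alpha\mid\alpha\in\alg{Q}}$.

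It then remains to propagate this agreement from $\mathcal{S}$ to all of $\mathcal{T}_X$. Since the lattice operations, the pointwise multiplication $\star$, the involution and the right $\widetilde{\alg{Q}_2}$-action on $\mathcal{T}_X$ are all computed coordinatewise, the map $\mathrm{ev}_{p_0}$ is, like $K$, a join-preserving involutive quantale homomorphism and a right $\widetilde{\alg{Q}_2}$-module homomorphism. By the explicit description (\ref{neweq4.3}), each $f\in\mathcal{T}_X$ is a join of elements of the four types $\mathds{A}_{\alpha_1}$, $\mathds{A}_{\alpha_2}\star a_\ell$, $a_r\star\mathds{A}_{\alpha_3}$ and $a_r\star\mathds{A}_{\alpha_4}\star a_\ell$; as both $K$ and $\mathrm{ev}_{p_0}$ preserve joins, it suffices to check they agree on each such element. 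On $\mathds{A}_{\alpha_1}$ this is the previous display, and on $\mathds{A}_{\alpha_2}\star a_\ell$ it follows since both maps are right module homomorphisms. For the two types involving a left factor $a_r$ one uses that $\star$ is the involutive quasi-magma, that $a_r=a_\ell'$, and that $(\mathds{A}_\alpha)'=\mathds{A}_{\alpha'}$: hence $a_r\star\mathds{A}_{\alpha_3}=(\mathds{A}_{\alpha_3'}\star a_\ell)'$, which, being the involute of the element $\mathds{A}_{\alpha_3'}\star a_\ell\in\mathcal{T}_X$, indeed lies in $\mathcal{T}_X$, so for $\Psi\in\set{K,\mathrm{ev}_{p_0}}$
\[\Psi(a_r\star\mathds{A}_{\alpha_3})=\bigl(\Psi(\mathds{A}_{\alpha_3'}\star a_\ell)\bigr)'=\bigl(\Psi(\mathds{A}_{\alpha_3'})\star a_\ell\bigr)'=\bigl(\mathds{A}_{\alpha_3'}(p_0)\star a_\ell\bigr)',\]
which does not depend on the choice of $\Psi$; applying the right action by $a_\ell$ once more disposes of the fourth type. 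Therefore $K=\mathrm{ev}_{p_0}$, i.e. $K(f)=f(p_0)$ for all $f\in\mathcal{T}_X$, and $(X,\mathcal{T}_X)$ is an involutive $\alg{Q}_{\boldsymbol{2}}$-sober space.

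The conceptual heart of the argument is the first step: composing with $\Phi$ turns an abstract $K$ into a hermitian strongly prime element via Proposition~\ref{newproposition3}, and this is essentially forced. The step I expect to require the most care is the last one --- extending agreement from the subbase $\mathcal{S}$ to the whole topology. The subtlety is that the hypotheses on $K$ only refer to the \emph{right} $\widetilde{\alg{Q}_2}$-action, whereas the description (\ref{neweq4.3}) also involves \emph{left} multiplication by $a_r$; these left factors have to be traded for right ones by passing through the involution, which is precisely the point at which it is essential that $K$ be involutive and not merely a right module homomorphism. Apart from this bookkeeping --- and a quick check that the relevant elements genuinely lie in $\mathcal{T}_X$ --- the proof is routine.
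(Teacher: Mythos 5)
Your proposal is correct and follows essentially the same route as the paper: compose $K$ with $\Phi$ to obtain a strong involutive homomorphism $\alg{Q}\to\alg{Q}_{\boldsymbol{2}}$, identify it via Proposition~\ref{newproposition3} with $h_{p_0}$ for the hermitian strongly prime element $p_0=(K\circ\Phi)^{\vdash}(c)$, and then extend the agreement $K(\mathds{A}_\alpha)=\mathds{A}_\alpha(p_0)$ to all of $\mathcal{T}_X$ by joins, the right $\widetilde{\alg{Q}_2}$-action, and the involution (trading left factors $a_r$ for right factors $a_\ell$ exactly as the paper does). No gaps.
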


\begin{proof} Let $\mathcal{T}_X\xrightarrow{\, K\,} \alg{Q}_{\boldsymbol{2}}$ be an involutive quantale homomorphism, which is also a right \donotbreakdash{$\widetilde{\alg{Q}_2}$}module homomorphism. Then 
the composition $\alg{Q}\xrightarrow{\ K\circ \Phi\,} \alg{Q}_{\boldsymbol{2}}$ is again an involutive strong quantale homomorphism, and it is induced by the hermitian strong prime element $p_0=(K\circ \Phi)^{\vdash}(c)$ --- i.e.\ $K\circ \Phi=h_{p_0}$. In particular
\stepcounter{num}
\begin{equation}\label{neweq4.4} 
K(\mathds{A}_{\alpha})= (K\circ \Phi)(\alpha)= h_{p_0}(\alpha)=
\mathds{A}_{\alpha}(p_0) , \qquad \alpha\in \alg{Q}.
\end{equation}
Since $K$ is involutive and a right \donotbreakdash{$\widetilde{\alg{Q}_2}$}module
 homomorphism, we conclude from  (\ref{neweq4.4}) that for all $\beta\in \alg{Q}$ the following relations hold:
\begin{align*}&K(\mathds{A}_{\alpha}\star a_{\ell})= K(\mathds{A}_{\alpha})\star a_{\ell}=\mathds{A}_{\alpha}(p_0)\star a_{\ell}=(\mathds{A}_{\alpha}\star a_{\ell})(p_0),\\
&K(a_r\star  \mathds{A}_{\alpha})= (K((\mathds{A}_{\alpha})^{\prime}\star a_{\ell}))^{\prime}= (K(\mathds{A}_{\alpha})^{\prime}\star a_{\ell})^{\prime}=(\mathds{A}_{\alpha}(p_0)^{\prime}\star a_{\ell})^{\prime}= a_r\star \mathds{A}_{\alpha}(p_0)=(a_r\star\mathds{A}_{\alpha})(p_0),\\
&K(a_r\star  \mathds{A}_{\alpha}\star a_{\ell})=K(a_r\star  \mathds{A}_{\alpha})\star a_{\ell}=a_r\star \mathds{A}_{\alpha}(p_0)\star a_{\ell}=(a_r\star  \mathds{A}_{\alpha}\star a_{\ell})(p_0).
\end{align*}
Hence, $K(g)=g(p_0)$ follows  for all $g\in \mathcal{T}_X$ (cf.\ (\ref{neweq4.3})). 
\end{proof}

In the following we are interested in involutive quantales  such that the associated quantized topological spaces have Fr\'echet separated or strongly Hausdorff separated subspaces. As we will see, quantic frames will form an interesting class for this purpose.

\section{Topologization of quantic frames}
\label{sec:5}

Let  $\alg{Q}$ be a quantic frame, $X=\sigma_h(\alg{Q})$, and $(X,\mathcal{T}_X)$ be the involutive quantized topological space associated with $\alg{Q}$ in the sense of (\ref{neweq4.3}). Then the bijective map $\sigma(\mathds{L}(\alg{Q})) \xrightarrow{\,\varphi\,}\sigma_h(\alg{Q})$ given by $\varphi(q)=q\vee q^{\prime}$ for each $q\in \sigma(\mathds{L}(\alg{Q}))$ (cf. Corollary~\ref{newcorollary4}) induces an involutive quantized topology $\mathcal{T}_Y$ on $Y=\sigma(\mathds{L}(\alg{Q}))$  as follows:
\[\mathcal{T}_Y=\set{f\circ \varphi\mid f\in \mathcal{T}_X}.\]
Since in various cases the quantale $\mathds{L}(\alg{Q})$ is easier to understand than the corresponding quantic frame $\alg{Q}$ (cf.\ Example~\ref{newexample2}), we now consider the involutive quantalized topological space $(Y,\mathcal{T}_Y)$ as the topological space associated with  $\alg{Q}$. 

The aim of the following considerations is to construct a subbase of $\mathcal{T}_Y$. 

Referring to Proposition~\ref{newproposition3}, 
we recall that each prime element $p$ of $\mathds{L}(\alg{Q})$ induces an involutive strong quantale homomorphism $\alg{Q} \xrightarrow{\, h_{\varphi(p)}\,} \alg{Q}_{\boldsymbol{2}}$, 
and a pair of strong quantale homomorphisms $\mathds{L}(\alg{Q}) \xrightarrow{\, h_{p}\,} \alg{Q}_{\boldsymbol{2}}$ and $\mathds{R}(\alg{Q}) \xrightarrow{\, h_{p^{\prime}}\,} \alg{Q}_{\boldsymbol{2}}$ by (cf.\ (\ref{neweq2.8})):
\[h_{p}(\alpha)=\begin{cases} \bot, & \alpha\ast \top\le p,\\ a_{\ell}, &  \alpha\ast \top \not\le p, \,\alpha\le p,\\
\top,  & \alpha\not\le p,
\end{cases}\quad
h_{p^{\prime}}(\beta)=\begin{cases} \bot, & \top\ast \beta\le p^{\prime},\\ a_r, &  \top\ast\beta \not\le p^{\prime}, \,\beta\le p^{\prime},\\
\top, & \beta\not\le p^{\prime}, \end{cases}  \quad \alpha \in \mathds{L}(\alg{Q}),\,\beta\in \mathds{R}(\alg{Q}).\]
Now we refer to Remark~\ref{newremark4}, Proposition~\ref{newproposition4}\,(c) and (d), and Corollary~\ref{newcorollary4} and observe that $\varphi(p)=p\vee p^{\prime}= \pi((p\otimes \top)\vee (\top\otimes p^{\prime}))\in \sigma_h(\alg{Q})$ and $\pi^{\vdash}(\varphi(p))=(p\otimes \top)\vee(\top\otimes  p^{\prime})=r\in \sigma_h(\mathds{L}(\alg{Q})\otimes\mathds{R}(\alg{Q}))$.
Hence, we consider the composition $H_r=h_{\varphi(p)}\circ \pi$ and conclude that it is the coproduct of $h_p$ and $h_{p^{\prime}}$ --- i.e.\ $H_r=h_p\sqcup h_{p^{\prime}}$. Consequently, since $\alg{Q}$ is a quantic frame, the following diagram is commutative:
\[\begin{tikzcd}[column sep=25pt,row sep=10pt]
&\alg{Q}_2&\\
&\alg{Q}\arrow{u}{h_{\varphi(p)}}&\\
\mathds{L}(\alg{Q})\arrow[r,"j_{\mathds{L}(\alg{Q})}" near end]\arrow[ru,"\iota_{\mathds{L}(\alg{Q})}" near end]\arrow[ruu, bend left, "h_p"]&\mathds{L}(\alg{Q})\otimes \mathds{R}(\alg{Q})\arrow{u}{\pi}&\mathds{R}(\alg{Q})\arrow[lu,swap,"\iota_{\mathds{R}(\alg{Q})}" near end]\arrow[l,swap,"j_{\mathds{R}(\alg{Q})}" near end]\arrow[luu, swap,bend right, "h_{p'}"]
\end{tikzcd}\]
and for each $\alpha\in\mathds{L}(\alg{Q})$ and $\beta\in\mathds{R}(\alg{Q})$ the relation
\[h_{\varphi(p)}(\pi(\alpha\otimes\beta))=h_{\varphi(p)}(\pi(\top\otimes\beta))\star h_{\varphi(p)}(\pi(\alpha\otimes \top))= h_{p^{\prime}}(\beta)\star h_{p}(\alpha)\]
holds.
\pagebreak
 Since $\mathds{L}(\alg{Q})\otimes \mathds{R}(\alg{Q}) \xrightarrow{\,\pi\,} \alg{Q}$ (cf.\ Remark~\ref{newremark4}) is surjective and every tensor in $\mathds{L}(\alg{Q}) \otimes \mathds{R}(\alg{Q})$ is a join of elementary tensors,
 we choose $\alpha\in\mathds{L}(\alg{Q})$, $\beta\in\mathds{R}(\alg{Q})$ and $\mathds{A}_{\pi(\alpha\otimes \beta)}$ as on open \donotbreakdash{$\alg{Q}$}presheaf on $X$. Then we refer to (\ref{neweq4.2}) and define:
\stepcounter{num}
\begin{equation}\label{neweq5.1}
\mathds{B}_{\alpha\otimes \beta}(p):=\mathds{A}_{\pi(\alpha\otimes \beta)}(\varphi(p))=h_{\varphi(p)}(\pi(\alpha\otimes \beta))=h_{p^{\prime}}(\beta)\star h_{p}(\alpha),\qquad  p\in\sigma(\mathds{L}(\alg{Q})).
\end{equation}

\begin{remark}\label{remark6} Referring to the formulations of $h_{p}(\alpha)$ and 
$h_{p^{\prime}}(\beta)$, it is not difficult to check that the following holds:
\[\mathds{B}_{\alpha\otimes \beta}(p)=\begin{cases} \bot, & \beta^{\prime}\ast \top\le p\quad \text{or}\quad \alpha\ast\top \le p,\\ 
b, &\beta^{\prime}\ast \top\not\le p,\, \alpha\ast\top \not\le p,\,\alpha\ast\beta^{\prime}\le p, \,\beta^{\prime}\ast\alpha\le p,\\
a_{\ell}, &\beta^{\prime}\ast \top\not\le p,\, \alpha\ast\top \not\le p,\,\alpha\ast\beta^{\prime} \not\le p, \,\beta^{\prime}\ast\alpha\le p,\\
a_{r}, &\beta^{\prime}\ast \top\not\le p,\, \alpha\ast\top \not\le p,\, \beta^{\prime}\ast\alpha \not\le p, \,\alpha\ast\beta^{\prime}\le p,\\
\top,  & \alpha\ast\beta^{\prime}\not\le p,\,\beta^{\prime}\ast\alpha\not\le p,
\end{cases}\]
where we have used frequently the primality of $p$. 
%
%
%
\end{remark}

After these simplifications we note that the involutive quantized topology $\mathcal{T}_Y$ on $Y=\sigma(\mathds{L}(\alg{Q}))$ has the following base (cf.\ (\ref{neweq4.3})):
\stepcounter{num}
\begin{equation} \label{neweq5.2}
\mathcal{B}=\set{\mathds{B}_{\alpha_1\otimes \beta_1}\vee(\mathds{B}_{\alpha_2\otimes \beta_2}\star a_{\ell}) \vee  (a_r \star \mathds{B}_{\alpha_3\otimes \beta_3})\vee (a_r\star \mathds{B}_{\alpha_4\otimes \beta_4}\star a_{\ell})\mid \alpha_i\otimes\beta_i\in \mathds{L}(\alg{Q})\otimes \mathds{R}(\alg{Q}), \,i=1,..,4}.
\end{equation}



\begin{proposition}\label{newproposition8} Let $\alg{Q}$ be a quantic frame such that the subquantale $\mathds{L}(\alg{Q})$ is spatial, and let $(Y, \mathcal{T}_Y)$ be the associated  involutive quantized topological space determined by \textup(\ref{neweq5.2}\textup). 
 \begin{enumerate}[label=\textup{(\alph*)},leftmargin=20pt,labelwidth=10pt,itemindent=0pt,labelsep=5pt,topsep=5pt,itemsep=3pt]
\item If $\mathds{B}_{\alpha_1\otimes\beta_1}$ and $\mathds{B}_{\alpha_2\otimes \beta_2}$ are open \donotbreakdash{$\widetilde{\alg{Q}_2}$}presheaves of $\mathcal{T}_Y$ with $\alpha_1,\alpha_2\in \mathds{L}(\alg{Q})$ and $\beta_1,\beta_2\in \mathds{R}(\alg{Q})$, then
$\mathds{B}_{\alpha_1\otimes\beta_1}=\mathds{B}_{\alpha_2\otimes \beta_2}$ if and only if $\beta_1^{\prime}\ast \alpha_1= \beta_2^{\prime}\ast \alpha_2$ and $\alpha_1\ast\beta_1^{\prime}= \alpha_2\ast\beta_2^{\prime}$.
\item For every left-sided open \donotbreakdash{$\widetilde{\alg{Q}_2}$}presheaf
 $f\in\mathcal{T}_Y$, there exists a unique pair $(\alpha,\gamma)\in{\mathds{L}(\alg{Q})\times \mathds{I}(\alg{Q})}$ satisfying the following property\textup:
\stepcounter{num}
\begin{equation}\label{neweq5.3}
\alpha\le \gamma \quad \text{and}\quad f=\mathds{B}_{\alpha\otimes \top}\vee\bigl(\mathds{B}_{\gamma\otimes \top}\star a_{\ell}\bigr).
\end{equation}
\item If $\alg{Q}$ is a factor, then $\mathds{L}(\mathcal{T}_Y)\setminus\{\underline{a_{\ell}}\}$ is isomorphic to $\mathds{L}(\alg{Q})$. 
\end{enumerate}
\end{proposition}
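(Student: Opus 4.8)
The plan is to base everything on an explicit description of the presheaves $\mathds{B}_{\alpha\otimes\beta}$, refining Remark~\ref{remark6}. Given $\alpha\in\mathds{L}(\alg{Q})$ and $\beta\in\mathds{R}(\alg{Q})$ (so $\beta^{\prime}\in\mathds{L}(\alg{Q})$), put $u:=\alpha\ast\beta^{\prime}$, $v:=\beta^{\prime}\ast\alpha$ (both in $\mathds{L}(\alg{Q})$) and, for $\delta\in\mathds{L}(\alg{Q})$, write $V(\delta):=\{p\in\sigma(\mathds{L}(\alg{Q}))\mid\delta\le p\}$. I would first prove the ``dictionary'': for every $p\in\sigma(\mathds{L}(\alg{Q}))$,
\[
\mathds{B}_{\alpha\otimes\beta}(p)=
\begin{cases}
\top, & u\not\le p\ \text{and}\ v\not\le p,\\
a_{\ell}, & u\not\le p\ \text{and}\ v\le p,\\
a_{r}, & u\le p\ \text{and}\ v\not\le p,\\
b, & u\le p,\ v\le p\ \text{and}\ u\ast\top\not\le p,\\
\bot, & u\ast\top\le p,
\end{cases}
\]
an exhaustive and mutually exclusive case split (note $u\le u\ast\top$ and $v\le v\ast\top$). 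Two identities carry the weight. First, $u\ast\top=v\ast\top=(\alpha\ast\top)\wedge(\beta^{\prime}\ast\top)$: the equalities come from bisymmetry of $\alg{Q}$ (e.g.\ $\alpha\ast\top\ast\beta^{\prime}\ast\top=\alpha\ast\beta^{\prime}\ast\top$, using $\top\ast\top=\top$), together with the fact that two-sided elements of a quantic frame are idempotent (property~\ref{A}), whence $\mathds{I}(\alg{Q})$ is a frame in which the multiplication is the binary meet. Second, ``$\alpha\ast\top\not\le p$ and $\beta^{\prime}\ast\top\not\le p$'' is equivalent to ``$u\ast\top\not\le p$'': this follows from the first identity and the primality of $p$ in $\mathds{L}(\alg{Q})$ applied to the two-sided (hence $\top$-fixed) elements $\alpha\ast\top$, $\beta^{\prime}\ast\top$. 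The remaining entries are then read off the multiplication table of $\alg{Q}_{\boldsymbol 2}$ via $\mathds{B}_{\alpha\otimes\beta}(p)=h_{p^{\prime}}(\beta)\star h_{p}(\alpha)$ (cf.\ (\ref{neweq5.1})) and the formulas for $h_{p},h_{p^{\prime}}$, using $\top\ast\alpha=\alpha$ and $\top\ast\beta^{\prime}=\beta^{\prime}$ (semi-unitality plus left-sidedness).

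For (a): the dictionary gives $V(u)=\{p\mid\mathds{B}_{\alpha\otimes\beta}(p)\notin\{a_{\ell},\top\}\}$ and $V(v)=\{p\mid\mathds{B}_{\alpha\otimes\beta}(p)\notin\{a_{r},\top\}\}$, so by spatiality of $\mathds{L}(\alg{Q})$ (each element is the meet of the primes containing it) the function $\mathds{B}_{\alpha\otimes\beta}$ determines the pair $(\alpha\ast\beta^{\prime},\beta^{\prime}\ast\alpha)$; conversely the dictionary exhibits $\mathds{B}_{\alpha\otimes\beta}$ as a function of $(u,v)$ alone (since $u\ast\top$ is itself a function of $u$). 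This is exactly the asserted equivalence.

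For (b): specialise $\beta=\top$, so $\beta^{\prime}=\top$, $u=\alpha\ast\top$, $v=\alpha$; the dictionary shows $\mathds{B}_{\alpha\otimes\top}$ is $\top$ off $V(\alpha)$, $a_{\ell}$ on $V(\alpha)\setminus V(\alpha\ast\top)$, and $\bot$ on $V(\alpha\ast\top)$, while for $\gamma\in\mathds{I}(\alg{Q})$ (so $\gamma\ast\top=\gamma$) $\mathds{B}_{\gamma\otimes\top}$ is $\top$ off $V(\gamma)$ and $\bot$ on $V(\gamma)$, hence $\mathds{B}_{\gamma\otimes\top}\star a_{\ell}$ is $a_{\ell}$ off $V(\gamma)$ and $\bot$ on $V(\gamma)$. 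Therefore, for $\alpha\le\gamma$ (whence $\alpha\ast\top\le\gamma$), the right-hand side of (\ref{neweq5.3}) is the presheaf $f_{\alpha,\gamma}$ equal to $\top$ off $V(\alpha)$, $a_{\ell}$ on $V(\alpha)\setminus V(\gamma)$, $\bot$ on $V(\gamma)$. Uniqueness is then immediate: $V(\alpha)=\{p\mid f_{\alpha,\gamma}(p)\neq\top\}$ and $V(\gamma)=\{p\mid f_{\alpha,\gamma}(p)=\bot\}$ are read off $f_{\alpha,\gamma}$, and spatiality recovers $\alpha$ and $\gamma$ as the corresponding meets of primes. Existence is the substantive part. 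Given a left-sided $f\in\mathcal{T}_Y$ (equivalently $f$ takes values in $\{\bot,a_{\ell},\top\}=\mathds{L}(\alg{Q}_{\boldsymbol 2})$), write $f$ in the general form (\ref{neweq5.2}) and analyse its value pattern using (\ref{neweq2.8}) at the hermitian primes $\varphi(p)=p\vee p^{\prime}$: one shows $\{p\mid f(p)=\bot\}=V(\gamma)$ for a single two-sided $\gamma$ (a join of elements $\top\ast\mu\ast\top$, hence two-sided; that $\gamma\le\varphi(p)\iff\gamma\le p$ uses that $\gamma$ is hermitian (property~\ref{B}) together with the description of $\alg{Q}$ as the coequalizer $\pi$, via which $\varphi(p)$ is $\pi$-closed), and that $\{p\mid f(p)\neq\top\}=V(\alpha)$ for a left-sided $\alpha\le\gamma$ obtained by passing to left-sided parts. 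Verifying that this extracted pair $(\alpha,\gamma)$ really reproduces $f$ — in particular that $\{p\mid f(p)\neq\top\}$ coincides with $V(\alpha)$ for the chosen left-sided $\alpha$ — is the main obstacle of the whole proposition; once it is in place the identification with $f_{\alpha,\gamma}$ is immediate.

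For (c): assume in addition that $\alg{Q}$ is a factor. Then $\mathds{I}(\mathds{L}(\alg{Q}))=\mathds{I}(\alg{Q})=\{\bot,\top\}$, so $\mathds{L}(\alg{Q})$ is a semi-unital, semi-integral factor; applying Lemma~\ref{newlemma1} (first with $\gamma=\lambda\neq\bot$ to get $\lambda\ast\top=\top$, then again to get $\lambda\ast\mu=\top\ast\mu=\mu$) shows the multiplication of $\mathds{L}(\alg{Q})$ is the degenerate one: $\lambda\ast\mu=\mu$ for $\lambda\neq\bot$, and $\bot\ast\mu=\bot$. By (b) and $\mathds{I}(\alg{Q})=\{\bot,\top\}$, every left-sided open presheaf is either $\underline{\bot}$ (the case $\gamma=\bot$, forcing $\alpha=\bot$) or $\mathds{B}_{\alpha\otimes\top}\vee\underline{a_{\ell}}$ (the case $\gamma=\top$, using $\mathds{B}_{\top\otimes\top}=\underline{\top}$ and $\mathds{B}_{\top\otimes\top}\star a_{\ell}=\underline{a_{\ell}}$); moreover $\mathds{B}_{\alpha\otimes\top}\vee\underline{a_{\ell}}$ is $\top$ off $V(\alpha)$ and $a_{\ell}$ on $V(\alpha)$, equalling $\underline{a_{\ell}}$ exactly when $\alpha=\bot$. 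Hence $\mathds{L}(\mathcal{T}_Y)\setminus\{\underline{a_{\ell}}\}=\{\underline{\bot}\}\cup\{\mathds{B}_{\alpha\otimes\top}\vee\underline{a_{\ell}}\mid\alpha\in\mathds{L}(\alg{Q})\setminus\{\bot\}\}$, this set is closed under joins and under the pointwise product, and the map $\Psi$ sending $\bot\mapsto\underline{\bot}$ and $\alpha\mapsto\mathds{B}_{\alpha\otimes\top}\vee\underline{a_{\ell}}$ ($\alpha\neq\bot$) is a bijection onto it. It is an order isomorphism because $\alpha\le\mu\iff V(\mu)\subseteq V(\alpha)\iff\mathds{B}_{\alpha\otimes\top}\vee\underline{a_{\ell}}\le\mathds{B}_{\mu\otimes\top}\vee\underline{a_{\ell}}$ (spatiality), and it preserves the multiplication since on presheaves with values in $\{a_{\ell},\top\}$ the pointwise product satisfies $g\star g^{\prime}=g^{\prime}$, which matches the degenerate product of $\mathds{L}(\alg{Q})$ ($\underline{\bot}$ being absorbing on both sides). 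Thus $\mathds{L}(\mathcal{T}_Y)\setminus\{\underline{a_{\ell}}\}$ and $\mathds{L}(\alg{Q})$ are isomorphic quantales.
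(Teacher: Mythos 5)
Your dictionary for $\mathds{B}_{\alpha\otimes\beta}(p)$ in terms of $u=\alpha\ast\beta^{\prime}$, $v=\beta^{\prime}\ast\alpha$ and $u\ast\top$ is correct and consistent with Remark~\ref{remark6}, and with it your argument for (a) is essentially the paper's: both directions reduce to reading off $V(u)$ and $V(v)$ from the level sets of $\mathds{B}_{\alpha\otimes\beta}$ and invoking spatiality of $\mathds{L}(\alg{Q})$. Part (c) is also sound \emph{granting} (b); your detour through the degenerate multiplication $\lambda\ast\mu=\mu$ ($\lambda\neq\bot$) of a spatial factor via Lemma~\ref{newlemma1} is a valid alternative to the shorter observation that $\mathds{B}_{\alpha\otimes\top}\star\mathds{B}_{\mu\otimes\top}=\mathds{B}_{(\alpha\ast\mu)\otimes\top}$ because each $h_{\varphi(p)}\circ\pi$ is a quantale homomorphism.

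The genuine gap is the existence half of (b), which you yourself flag as ``the main obstacle'' and leave unverified. Your plan --- extract $\gamma$ and $\alpha$ from the level sets $\{p\mid f(p)=\bot\}$ and $\{p\mid f(p)\neq\top\}$ and then check that these sets are of the form $V(\gamma)$ and $V(\alpha)$ --- names exactly the step that needs an argument, and nothing in the proposal supplies it; since (c) quotes the normal form from (b), the gap propagates there as well. The paper closes it by a different and cheaper route: a left-sided $f$ satisfies $f=\underline{\top}\star f$ (pointwise semi-unitality of $\alg{Q}_{\boldsymbol 2}$), and the identity
\[\top\star\mathds{B}_{\alpha\otimes\beta}(p)=h_{p^{\prime}}(\top\ast\beta)\star h_p(\alpha)=h_p(\top\ast\beta)\star h_p(\alpha)=\mathds{B}_{(\top\ast\beta\ast\alpha)\otimes\top}(p),\]
valid because $\top\ast\beta$ is two-sided and hence hermitian by \ref{B}, shows (together with $\top\star a_r=\top$) that applying $\underline{\top}\star(-)$ to the base description (\ref{neweq5.2}) collapses every left-sided $f$ to the form $\mathds{B}_{\alpha_1\otimes\top}\vee(\mathds{B}_{\alpha_2\otimes\top}\star a_{\ell})$; the normalization $\alpha:=\alpha_1$, $\gamma:=(\alpha_1\vee\alpha_2)\ast\top$ then yields (\ref{neweq5.3}). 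Until an argument of this kind is supplied, (b) is incomplete.
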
 

\begin{proof} (a)  Let us assume $\mathds{B}_{\alpha_1\otimes\beta_1}=\mathds{B}_{\alpha_2\otimes \beta_2}$. Then Remark~\ref{remark6} implies that for each prime element $p$ of $\mathds{L}({\alg{Q}})$ we have  
\begin{align*}
 \beta_1^{\prime}\ast \alpha_1\le p&\iff \mathds{B}_{\alpha_2\otimes \beta_2}(p)=\mathds{B}_{\alpha_1\otimes\beta_1}(p)\le a_{\ell}\iff \beta_2^{\prime}\ast \alpha_2\le p
\end{align*}
Since $\mathds{L}(\alg{Q})$ is spatial, it follows that $\beta_1^{\prime}\ast \alpha_1= \beta_2^{\prime}\ast \alpha_2$. The equality $\alpha_1\ast\beta_1^{\prime}= \alpha_2\ast\beta_2^{\prime}$ can be proved in a similar way.
Conversely, if $\beta_1^{\prime}\ast \alpha_1= \beta_2^{\prime}\ast \alpha_2$ and $\alpha_1\ast\beta_1^{\prime}= \alpha_2\ast\beta_2^{\prime}$, then Remark~\ref{remark6} implies that $\mathds{B}_{\alpha_1\otimes\beta_1}(p)=\mathds{B}_{\alpha_2\otimes \beta_2}(p)$ for each $p\in\sigma(\mathds{L}(\alg{Q})$.
\\[1mm]
(b) Since every two-sided element of $\alg{Q}$ is hermitian (cf.\ \ref{B}), and in particular so is the universal upper bound in $\alg{Q}_{\boldsymbol{2}}$, an application of (\ref{neweq5.1}) yields the following relation for all $p\in \sigma(\mathds{L}(\alg{Q}))$: 
\begin{align*}\top \star \mathds{B}_{\alpha\otimes \beta}(p)&=h_{p^{\prime}}(\top\ast \beta)\star h_{p}(\alpha)= h_{p^{\prime}}((\top\ast \beta)^{\prime})\star  h_{p}(\alpha)\\ 
& = h_{p}(\top\ast \beta)\star  h_{p}(\alpha)=
h_{p}(\top\ast \beta\ast\alpha)= \mathds{B}_{(\top\ast \beta\ast \alpha)\otimes\top}(p).
\end{align*}
Based on this observation, we refer to (\ref{neweq5.2}) and observe that every left-sided open \donotbreakdash{$\alg{Q}$}presheaf $f$ on $\sigma(\mathds{L}(\alg{Q}))$ has the form
$f= \mathds{B}_{\alpha_1\otimes \top}\vee (\mathds{B}_{\alpha_2\otimes \top}\star a_{\ell})$. Since $\mathds{B}_{\alpha_1\otimes \top}\star a_{\ell}\le \mathds{B}_{\alpha_1\otimes \top}$, we obtain:
\[f=\mathds{B}_{\alpha_1\otimes \top}\vee (\mathds{B}_{(\alpha_1\vee\alpha_2)\otimes \top}\star a_{\ell})=\mathds{B}_{\alpha_1\otimes \top}\vee (\mathds{B}_{(\alpha_1\vee\alpha_2)\otimes \top}\star\top \star a_{\ell})=\mathds{B}_{\alpha_1\otimes \top}\vee (\mathds{B}_{((\alpha_1\vee\alpha_2)\ast \top)\otimes \top}\star a_{\ell}).\]
Taking $\alpha:=\alpha_1$ and $\gamma:=(\alpha_1\vee\alpha_2)\ast\top$ the existence is verified.\\
Now we consider two pairs $(\alpha,\gamma),\, (\overline{\alpha},\overline{\gamma})\in \mathds{L}(\alg{Q})\times \mathds{I}(\alg{Q})$
satisfying (\ref{neweq5.3}). Since $\alpha\le \gamma$ and $\overline{\alpha}\le \overline{\gamma}$, Remark~\ref{remark6} implies that for each prime element $p$ of $\mathds{L}({\alg{Q}})$ the following equivalences hold:
\begin{align*}
\alpha\not\le p&\iff(\mathds{B}_{\alpha\otimes \top} \vee (\mathds{B}_{\gamma\otimes \top}\ast a_{\ell}))(p)= (\mathds{B}_{\overline{\alpha}\otimes \top} \vee (\mathds{B}_{\overline{\gamma}\otimes \top}\ast a_{\ell}))(p)=\top\iff \overline{\alpha}\not\le p\quad\text{and}\\
 \gamma\le p&\iff (\mathds{B}_{\alpha\otimes \top} \vee (\mathds{B}_{\gamma\otimes \top}\ast a_{\ell}))(p) =(\mathds{B}_{\overline{\alpha}\otimes \top} \vee (\mathds{B}_{\overline{\gamma}\otimes \top}\ast a_{\ell}))(p)=\bot\iff \overline{\gamma}\le p
\end{align*}
hold. Since $\mathds{L}(\alg{Q})$ is spatial, $\alpha=\overline{\alpha}$ and $\gamma=\overline{\gamma}$ follow, and the uniqueness is verified.\\[1mm]
(c) Since $\mathds{L}(\alg{Q})$ is spatial and a factor, then for all $\alpha\in \mathds{L}(\alg{Q})\setminus\{\bot\}$ we conclude that the property $\mathds{I}(\alg{Q})=\set{\bot,\top}$ is equivalent to $\underline{a_{\ell}}\le \mathds{B}_{\alpha\otimes \top}$. Hence the assertion follows from (b).
\end{proof}

\begin{addition*} Since quantic frames are involutive quantales, the respective assertions of Theorem~\ref{newproposition8}\,(b) and (c) hold also  for right-sided open \donotbreakdash{$\alg{Q}$}presheaves and for $\mathds{R}(\alg{Q})$ .
\end{addition*}

To describe strongly Hausdorff separated, involutive quantized topological subspaces of $\sigma(\mathds{L}(\alg{Q}))$, we need some more terminology.

Let $\alg{Q}$ be a quantic frame and $\mathds{ML}(\alg{Q})$ be the set of all maximal left-sided  elements of $\alg{Q}$. A nonempty subset $C$ of $\mathds{ML}(\alg{Q})$ is called \emph{reduced} if for each $\alpha\in C$ the relation $\tbigwedge (C\setminus\{\alpha\}) \not\le \alpha$ holds, where the infimum is calculated in $\mathds{L}(\alg{Q})$.

\begin{proposition}\label{newproposition9} Let $\alg{Q}$ be a quantic frame, $Y=\sigma(\mathds{L}(\alg{Q}))$, and $(Y, \mathcal{T}_Y)$ be the involutive quantized topological space associated with $\alg{Q}$ \textup(cf. \textup(\ref{neweq5.2}\textup)\textup). If $C\subseteq \mathds{ML}(\alg{Q})$ is reduced, then the relative involutive quantized topological subspace $(C,\mathcal{T}_C)$  is strongly Hausdorff separated.
\end{proposition}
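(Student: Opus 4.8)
The plan is to separate two distinct points of $C$ by an explicit pair of open presheaves taken from the base $\mathcal{B}$ of (\ref{neweq5.2}), using only the left-sided generators $\mathds{B}_{\alpha\otimes\top}$ and the right-sided generators $\mathds{B}_{\top\otimes\beta}$. Preliminarily, since $\mathds{L}(\alg{Q})$ is a left-sided (hence semi-integral) quantale, Fact~\ref{newfact1} shows that every maximal left-sided element is prime in $\mathds{L}(\alg{Q})$, so $C\subseteq\mathds{ML}(\alg{Q})\subseteq Y=\sigma(\mathds{L}(\alg{Q}))$ and $\mathcal{T}_C=\set{f|_C\mid f\in\mathcal{T}_Y}$ is a legitimate involutive quantized relative topology. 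I would then record the following evaluations, read off from Remark~\ref{remark6} (equivalently from (\ref{neweq5.1})): for $\alpha\in\mathds{L}(\alg{Q})$ and $p\in Y$ one has $\mathds{B}_{\alpha\otimes\top}(p)=h_p(\alpha)\in\set{\bot,a_{\ell},\top}$, with value $\top$ precisely when $\alpha\not\le p$ and value in $\set{\bot,a_{\ell}}$ whenever $\alpha\le p$; dually, for $\beta\in\mathds{R}(\alg{Q})$ one has $\mathds{B}_{\top\otimes\beta}(p)=h_{p^{\prime}}(\beta)\in\set{\bot,a_r,\top}$, with value $\top$ precisely when $\beta\not\le p^{\prime}$ and value in $\set{\bot,a_r}$ whenever $\beta\le p^{\prime}$. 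Here one uses that $\alg{Q}$ is semi-unital, so $\top\ast\alpha=\alpha$ for left-sided $\alpha$ and $\alpha\le\alpha\ast\top$, and that $\top$ is hermitian. In particular $\mathds{B}_{\alpha\otimes\top}$ is a left-sided and $\mathds{B}_{\top\otimes\beta}$ a right-sided presheaf, and each belongs to $\mathcal{B}$ (take the remaining three summands in (\ref{neweq5.2}) to be $\mathds{B}_{\bot\otimes\bot}=\underline{\bot}$), so their restrictions to $C$ belong to $\mathcal{T}_C$.

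Now fix distinct $x,y\in C$ (if $|C|=1$ there is nothing to prove) and set $\delta_x:=\tbigwedge(C\setminus\set{x})$ and $\delta_y:=\tbigwedge(C\setminus\set{y})$, the meets being computed in $\mathds{L}(\alg{Q})$. These elements are left-sided; moreover the involution of the quantic frame is a lattice automorphism that fixes $\top$ and carries left-sided onto right-sided elements, so $\delta_x^{\prime}\in\mathds{R}(\alg{Q})$. Define
\[f_1:=\mathds{B}_{\delta_y\otimes\top}\big|_C\ (\text{left-sided}),\qquad f_2:=\mathds{B}_{\top\otimes\delta_x^{\prime}}\big|_C\ (\text{right-sided}),\]
both lying in $\mathcal{T}_C$. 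Reducedness of $C$ gives $\delta_y\not\le y$ and $\delta_x\not\le x$ (hence $\delta_x^{\prime}\not\le x^{\prime}$, since the involution is an order isomorphism), while $\delta_y\le z$ for every $z\in C\setminus\set{y}$ and $\delta_x\le z$ — so $\delta_x^{\prime}\le z^{\prime}$ — for every $z\in C\setminus\set{x}$. Substituting these into the evaluations above, $f_1(y)=\top$ and $f_1(z)\le a_{\ell}$ for all $z\in C\setminus\set{y}$, whereas $f_2(x)=\top$ and $f_2(z)\le a_r$ for all $z\in C\setminus\set{x}$.

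It remains to compare $f_2(x)\star f_1(y)$ with $\tbigvee_{z\in C}\bigl(f_2(z)\star f_1(z)\bigr)$ inside $\alg{Q}_{\boldsymbol{2}}$, which is a short computation with the multiplication table. First, $f_2(x)\star f_1(y)=\top\star\top=\top$. Second, for each $z\in C$: if $z=x$ then $f_2(x)\star f_1(x)=\top\star f_1(x)\le\top\star a_{\ell}=a_{\ell}$; if $z=y$ then $f_2(y)\star f_1(y)=f_2(y)\star\top\le a_r\star\top=a_r$; and otherwise $f_2(z)\star f_1(z)\le a_r\star a_{\ell}=b$. Since $b\le a_{\ell}$, $b\le a_r$ and $a_{\ell}\vee a_r=c$, we obtain $\tbigvee_{z\in C}\bigl(f_2(z)\star f_1(z)\bigr)\le c\neq\top$, hence $f_2(x)\star f_1(y)=\top\not\le\tbigvee_{z\in C}\bigl(f_2(z)\star f_1(z)\bigr)$ — precisely the strong Hausdorff separation of $(C,\mathcal{T}_C)$. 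The step requiring care is the second one: trimming a base element of (\ref{neweq5.2}) down to a pure generator $\mathds{B}_{\alpha\otimes\top}$ or $\mathds{B}_{\top\otimes\beta}$ and reading its values off Remark~\ref{remark6} (checking which of the five cases there can survive once one argument is $\top$), together with the left/right bookkeeping around the involution; everything afterwards is the finite $\alg{Q}_{\boldsymbol{2}}$-computation just given.
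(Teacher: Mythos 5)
Your proof is correct and follows essentially the same route as the paper: with $y=\alpha_1$, $x=\alpha_2$, $\delta_y=\beta_1$, $\delta_x=\beta_2$, your separating pair $f_1=\mathds{B}_{\delta_y\otimes\top}$, $f_2=\mathds{B}_{\top\otimes\delta_x^{\prime}}$ is exactly the paper's choice, and the concluding estimate $\tbigvee_{z\in C}f_2(z)\star f_1(z)\le b\vee a_{\ell}\vee a_r=c\ne\top$ is the same computation. The only difference is that you spell out the evaluation of the generators via Remark~\ref{remark6} and the justification that they lie in the base, which the paper leaves implicit.
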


\begin{proof} Let us choose $\alpha_1,\alpha_2\in C$ with $\alpha_1\neq \alpha_2$. Then we introduce the following elements $\beta_1,\beta_2\in \mathds{L}(\alg{Q})$:
\[\beta_1=\tbigwedge (C\setminus\{\alpha_1\}) \quad \text{and}\quad \beta_2=\tbigwedge (C\setminus\{\alpha_2\}).\]
Then $f_1=\mathds{B}_{\beta_1\otimes\top}\in \mathcal{T}_C$ is left-sided and $f_2=\mathds{B}_{\top\otimes \beta_2^{\prime}}\in \mathcal{T}_C$ is right-sided with $f_2(\alpha_2)\star f_1(\alpha_1)=
\top$, and
\begin{align*}
\tbigvee\limits_{\alpha\in C}f_2(\alpha)\star f_1(\alpha)&=\bigl(\tbigvee\limits_{\alpha\in C\setminus\{\alpha_1,\alpha_2\}}f_2(\alpha)\star f_1(\alpha)\bigr)\vee\bigl(f_2(\alpha_1)\star f_1(\alpha_1)\bigr)\vee\bigl(f_2(\alpha_2)\star f_1(\alpha_2)\bigr)\\
&\le(a_r\star a_{\ell})\vee (\top\star a_{\ell})\vee (a_r\star\top)= b\vee a_{\ell}\vee a_r=c
\end{align*}
Hence, $(C,\mathcal{T}_C)$ is strongly Hausdorff separated.
\end{proof}

We finish this section with two examples of  involutive quantized topological spaces induced by quantic frames.
\begin{example}\label{exam5} Recall that $\alg{Q}_{\boldsymbol{2}}$ is a quantic frame (cf. Remark~\ref{newremark4}\,(b)) with
 $\mathds{L}(\alg{Q}_{\boldsymbol{2}})=\set{\bot,a_{\ell},\top}$ and $\sigma(\mathds{L}(\alg{Q}_{\boldsymbol{2}}))=\set{\bot,a_{\ell}}$. Thus $\mathds{L}(\alg{Q}_{\boldsymbol{2}})$ is spatial. According to (\ref{neweq2.6}) and (\ref{neweq2.7}), the involutive quantized topology $\mathcal{T}$ on $\sigma(\mathds{L}(\alg{Q}_{\boldsymbol{2}}))$ associated with $\alg{Q}_{\boldsymbol{2}}$ in the sense of (\ref{neweq5.2}) has a base $\mathcal{B}$ consisting of $9$ elements:
\[\mathcal{B}=\set{\mathds{B}_{a_{\ell}\otimes \top},\,\mathds{B}_{\top\otimes a_r},\,\mathds{B}_{a_{\ell}\otimes a_r},\, a_r\star \mathds{B}_{a_{\ell}\otimes \top},\,\mathds{B}_{\top\otimes a_r}\star  a_{\ell},\, \underline{b},\,\underline{a_{\ell}},\,\underline{a_r},\,\underline{\top}}.\]
Then $\bigl(\sigma(\mathds{L}(\alg{Q}_{\boldsymbol{2}})),\mathcal{T}\bigr)$ is an involutive \donotbreakdash{$\alg{Q}_{\boldsymbol{2}}$}sober space (cf.\ Proposition~\ref{newproposition7}). However, it is easy to check that $f(a_{\ell})\le f(\bot)$ for all $f\in \mathcal{T}$. Hence 
$\bigl(\sigma(\mathds{L}(\alg{Q}_{\boldsymbol{2}})),\mathcal{T}\bigr)$ is not Fr\'echet separated. 

Moreover, $\mathcal{T}$ is an involutive quantale, but \emph{not} a quantic frame. Indeed, let us assume that $\mathcal{T}$ is a quantic frame. Since $\mathds{L}(\alg{Q}_{\boldsymbol{2}})$ is spatial, we conclude from Proposition~\ref{newproposition8}\,(c) that the subquantale of all left-sided (resp.\ right-sided) open \donotbreakdash{$\alg{Q}$}presheaves have the following form:
\[\mathds{L}(\mathcal{T})=\bigset{\underline{\bot},\underline{a_{\ell}}, \mathds{B}_{a_{\ell}\otimes \top},\underline{\top}} \quad \text{and}\quad \mathds{R}(\mathcal{T})=\bigset{\underline{\bot},\underline{a_r}, \mathds{B}_{\top\otimes a_r},\underline{\top}}.\]
Hence $\mathds{I}(\mathcal{T})=\set{\underline{\bot},\overline{\top}}$, and we conclude from Remark~\ref{newremark4} that $\mathds{L}(\mathcal{T})\otimes \mathds{R}(\mathcal{T}) \xrightarrow{\,\pi\,} \mathcal{T}$ is an isomorphism  making the diagram 
\[
\begin{tikzcd}[column sep=25pt,row sep=15pt]
&\mathcal{T}&\\
\mathds{L}(\mathcal{T})\arrow[hookrightarrow]{ru}{j_{\mathds{L}}}\arrow[r,"j_{\mathds{L}(\mathcal{T})}" near end]&\mathds{L}(\mathcal{T})\otimes \mathds{R}(\mathcal{T})\arrow{u}{\pi}&\mathds{R}(\mathcal{T})\arrow[hookrightarrow,swap]{lu}{j_{\mathds{R}}}\arrow[swap,l,"j_{\mathds{R}(\mathcal{T})}" near end]
\end{tikzcd}
\]
commutative. In particular $\pi$ satisfies the following property for all $f\in \mathds{L}(\mathcal{T})$ and  $g\in \mathds{R}(\mathcal{T})$:
\begin{equation*} 
\pi(f\otimes g)= \pi(\underline{\top}\otimes g)\star \pi(f\otimes \underline{\top})= j_{\mathds{R}}(g)\star  j_{\mathds{L}}(f)= g\star  f.
\end{equation*}
Now we observe that $(\underline{a_{\ell}}\otimes \underline{\top})\vee (\mathds{B}_{a_{\ell}\otimes \top}\otimes \mathds{B}_{\top\otimes a_r})\neq \mathds{B}_{a_{\ell}\otimes \top}\otimes \underline{\top}$ and
\[\pi\bigl( (\underline{a_{\ell}}\otimes \underline{\top})\vee (\mathds{B}_{a_{\ell}\otimes \top}\otimes \mathds{B}_{\top\otimes a_r})\bigr)=\underline{a_{\ell}} \vee (\mathds{B}_{\top\otimes a_r}\star  \mathds{B}_{a_{\ell}\otimes \top})= \mathds{B}_{a_{\ell}\otimes \top}= \pi(\mathds{B}_{a_{\ell}\otimes \top}\otimes \underline{\top}).\]
Consequently, $\pi$ is not injective, and so the assumption is false.
\end{example}

\begin{example} Let $\mathcal{H}$ be an infinite dimensional Hilbert space and $B(\mathcal{H})$ be the \donotbreakdash{$C^*$}algebra of all bounded linear operators $\mathcal{H}\xrightarrow{\,\,\,}\mathcal{H}$.  Then the spectrum $\Omega_q(B(\mathcal{H}))$ of non-commutative closed  ideals of $B(\mathcal{H})$ (cf.\ Example~\ref{newexample2}) is a quantic frame and $\mathds{L}(\Omega_q(B(\mathcal{H}))\cong \mathds{L}(B(\mathcal{H}))$ is spatial (cf.\ Example~\ref{newexample1}). Now let $(\sigma(\mathds{L}(B(\mathcal{H}))), \mathcal{T})$ be the involutive \donotbreakdash{$\alg{Q}_{\boldsymbol{2}}$}sober space induced by $\Omega_q(B(\mathcal{H}))$ in the sense of (\ref{neweq5.2}). Since the ideal multiplication in $\mathds{L}(B(\mathcal{H}))$ does not have zero divisors, the zero ideal in $\mathds{L}(B(\mathcal{H}))$ is prime. Hence $(\sigma(\mathds{L}(B(\mathcal{H})), \mathcal{T})$ is not Fr\'echet separated. Therefore we are interested in involutive quantized  topological subspaces.

The first choice is the space $\mathcal{P}$ of all pure states of $B(\mathcal{H})$. For this purpose we recall that every pure state $\varrho$ of $B(\mathcal{H})$ can be identified with a maximal left ideal $I_{\varrho}$ given by its left kernel
\[I_{\varrho}=\set{f\in B(\mathcal{H})\mid \varrho(f^*\cdot f)=0}.\]
Hence we denote the set of maximal left ideal of $B(\mathcal{H})$ also by $\mathcal{P}$. Since maximal left ideals are incomparable, the involutive
quantized topological subspace of all pure states of  $(\sigma(\mathds{L}(B(\mathcal{H}))), \mathcal{T})$ is Fr\'echet separated, but it is an open question whether the subspace $\mathcal{P}$ is Hausdorff separated. To obtain non-trivial strongly Hausdorff separated,   involutive quantized topological subspaces we  can follow Proposition~\ref{newproposition9} and consider reduced nonempty subsets of maximal left ideals. 

A simple example of a reduced nonempty subset of maximal left ideals can be given by a family 
$\set{\omega_{x_i}\mid i\in I}$
 of vector states (cf.\ Example~\ref{newexample1}) such that $\set{x_i\mid i\in I}$ is linearly independent. To confirm this fact, we fix a linearly independent family $\set{x_i\mid i\in I}$   of unit vectors $x_i\in \mathcal{H}$ and $i_0\in I$. Then we consider the closed linear subspace $V$ of $\mathcal{H}$ generated by $\set{x_i\mid  i\in I}$ and the closed linear subspace $W_{i_0}$ of $\mathcal{H}$ generated by $\set{x_i\mid i\in I\text{ and } i\neq i_0}$. Now we choose a unit vector $e_{i_0}\in V$ being orthogonal to $W_{i_0}$ and consider the following bounded, linear operator $\mathcal{H}\xrightarrow{\,f_{i_0}\,} \mathcal{H}$ determined by:
\[f_{i_0}(x)=\langle x,e_{i_0} \rangle e_{i_0},\qquad x\in \mathcal{H}.\]
It is easily seen that the following relations hold $f_{i_0}(x_{i_0})=\langle x_{i_0},e_{i_0}\rangle e_{i_0}\neq \vec{0}$ and $f_{i_0}(x_i)=\vec{0}$ for all $i\neq i_0$. Since $I_{x_i}=\set{f\in B(\mathcal{H})\mid f(x_i)=\vec{0}}$ (cf.\ Example~\ref{newexample1}), we conclude:
\[ f_{i_0}\not\in I_{x_{i_0}} \quad \text{and}\quad f_{i_0}\in \tbigcap\limits_{i\neq i_0} I_{x_i}.\]
Hence $\tbigcap_{i\neq i_0}I_{x_i}\not\subseteq I_{x_{i_0}}$ follows --- i.e.\ $\set{I_{x_i}\mid i\in I}$ is a reduced subset of maximal left ideals. The cardinality of $\set{I_{x_i}\mid i\in I}$ depends obviously on the Hilbert space dimension of $\mathcal{H}$.
\end{example}

The previous example shows that there does not exist any relationship between the strong Hausdorff separation axiom and the non-equivalence of irreducible representations of $B(\mathcal{H})$ (cf.\ \cite[Thm.~10.2.6 and Prop.~10.4.15]{Kadison2}).





\begin{thebibliography}{99}
\bibitem{Arrieta} I. Arrieta, J.~Guti\'errez Garc\'{\i}a, U.~H\"ohle, Enriched lower separation axioms and the principle of enriched continuous extension, Fuuzy Sets and Systems {\bf 468} (2023) 108633.

\bibitem{DP} B.A. Davey, H.A. Priestley, Introduction to Lattices and Order, Cambridge Univ. Press, Cambridge, First edition, 1990; Second edition, 2002.

\bibitem{EGHK} P.~Eklund, J.~Guti\'{e}rrez~Garc\'{\i}a, U.~H\"{o}hle, J. Kortelainen,  \emph{Semigroups in Complete Lattices: Quantales, Modules and Related Topics}, Developments in Mathematics, vol.~54, Springer International Publishing 2018. 
\bibitem{GHK19} J.~Guti\'errez Garc\'{\i}a, U.~H\"ohle, T.~Kubiak, An extension of the fuzzy unit interval to a tensor product with a completely distributive first factor, Fuzzy Sets and Systems {\bf 370} (2019), 63--78.
\bibitem{GHK21} J.~Guti\'{e}rrez~Garc\'{\i}a, U. H\"{o}hle, T. Kubiak, Basic concepts of quantale-enriched topologies, Applied Categorical Structures {\bf 29} (2021), 983--1003.
\bibitem{GHK21b}J.~Guti\'errez Garc\'{\i}a, U.~H\"ohle, T.~Kubiak, Invariance of projective modules in $\cat{Sup}$ under self-duality, Algebra Universalis {\bf 82} (2021) 9.
\bibitem{GHK22} J.~Guti\'errez Garc\'{\i}a, U.~H\"ohle, T.~Kubiak, A theory of quantale-enriched dcpos and their topologization, Fuzzy Sets and Systems {\bf 444} (2022), 103--130.
\bibitem{GH24} J.~Guti\'errez Garc\'{\i}a, U.~H\"ohle, Right algebras in $\cat{Sup}$ and the topological representation of semi-unital and semi-integral quantales, revisited, Mathematica Slovaca {\bf 74} no.~2 (2024), 261--280.
\bibitem{Ho15B} U.~H\"ohle, Prime elements of non-integral quantales and their applications, Order {\bf 32} (2015), 329--346.
\bibitem{Johnstone} P.T.~Johnstone, Stone Spaces, Cambridge University Press, Cambridge 1982.
\bibitem{JoyalTierney} A.~Joyal, M.~Tierney,  An Extension of the Galois Theory of Grothendieck, Mem. Amer. Math. Soc., {\bf 51} No. 309 (1984).
\bibitem{Kadison1} R.V.~Kadison, J.R.~Ringrose, Fundamentals of the Theory of Operator Algebras. Volume I Elementary Theory. Graduate Studies in Mathematics, vol.~15. American Mathematical Society, Providence, Rhode Island 1997.
\bibitem{Kadison2} R.V.~Kadison, J.R.~Ringrose, Fundamentals of the Theory of Operator Algebras. Volume II Advanced Theory. Graduate Studies in Mathematics, vol.~16. American Mathematical Society, Providence, Rhode Island 1997.
\bibitem{MacLane} S.~Mac\thinspace Lane, Categories for the Working Mathematician, Graduate Texts in Mathematics vol.~5, 2nd.~edn., Springer, Berlin, Heidelberg, New York (1998).
\bibitem{MulveyPelletier92} C.J.~Mulvey, J.W.~Pelletier, A quantisation of the calculus of relations, in R.A.G.~Seely (ed.) CMS Conference Proceedings, vol.~ 13, pp. 345--360. American Mathematical Society, Providence, Rhode island (1992).
\bibitem{Mulvey01} C.J.~Mulvey, J.W.~Pelletier, On the quantisation of points, {Journal of} Pure  Appl. Algebra {\bf 159} (2001), 231--295.
\bibitem{Mulvey02}  C.J.~Mulvey, J.W.~Pelletier, On the quantisation of spaces, {Journal of} Pure  Appl. Algebra {\bf 175} (2002), 289--325.
\end{thebibliography}
\end{document}